\def\bbordermatrix#1{\begingroup \m@th
	\global\let\perhaps@scriptstyle\scriptstyle
	\@tempdima 4.75\p@
	\setbox\z@\vbox{%
		\def\cr{%
			\crcr
			\noalign{%
				\kern2\p@
				\global\let\cr\endline
				\global\let\perhaps@scriptstyle\relax
			}%
		}%
		\ialign{$\make@scriptstyle{##}$\hfil\kern2\p@\kern\@tempdima
			&\thinspace\hfil$\perhaps@scriptstyle##$\hfil
			&&\quad\hfil$\perhaps@scriptstyle##$\hfil\crcr
			\omit\strut\hfil\crcr
			\noalign{\kern-\baselineskip}%
			#1\crcr\omit\strut\cr}}%
	\setbox\tw@\vbox{\unvcopy\z@\global\setbox\@ne\lastbox}%
	\setbox\tw@\hbox{\unhbox\@ne\unskip\global\setbox\@ne\lastbox}%
	\setbox\tw@\hbox{$\kern\wd\@ne\kern-\@tempdima\left[\kern-\wd\@ne
		\global\setbox\@ne\vbox{\box\@ne\kern2\p@}%
		\vcenter{\kern-\ht\@ne\unvbox\z@\kern-\baselineskip}\,\right]$}%
	\null\;\vbox{\kern\ht\@ne\box\tw@}\endgroup}
\def\make@scriptstyle#1{\vcenter{\hbox{$\scriptstyle#1$}}}
\newcommand{\R}{\mathbb{R}}
\newcommand{\C}{\mathbb{C}}
\newcommand{\efe}{\mathbb{F}}
\newcommand{\FF}{\mathbb{F}}
\newcommand{\F}{\mathbb{F}}
\newcommand{\la}{\lambda}
\newcommand{\M}{\mathbb{M}}
\newcommand{\DM}{\mathbb{DM}}
\newcommand{\bS}{\mathbb{S}}
\def\rank{\mathop{\rm rank}\nolimits}
\newcommand{\wh}{\widehat}
\newtheorem{theo}{Theorem}[section]
\newtheorem{deff}[theo]{Definition}
\newtheorem{prop}[theo]{Proposition}
\newtheorem{lem}[theo]{Lemma}
\newtheorem{con}[theo]{Corollary}
\newtheorem{rem}[theo]{Remark}
\newtheorem{example}[theo]{Example}
\DeclareMathOperator{\diag}{diag}
\DeclareMathOperator{\rev}{rev}
\begin{document}
\title{Strong linearizations of rational matrices with polynomial part expressed in an orthogonal basis}

\begin{abstract}
	We construct a new family of strong linearizations of rational matrices considering the polynomial part of them expressed in a basis that satisfies a three term recurrence relation. For this purpose, we combine the theory developed by Amparan et al., MIMS EPrint 2016.51, and the new linearizations of polynomial matrices introduced by Fa{\ss}bender and Saltenberger, Linear Algebra Appl., 525 (2017). In addition, we present a detailed study of how to recover eigenvectors of a rational matrix from those of its linearizations in this family. We complete the paper by discussing how to extend the results when the polynomial part is expressed in other bases, and by presenting strong linearizations that preserve the structure of symmetric or Hermitian rational matrices. A conclusion of this work is that the combination of the results in this paper with those in Amparan et al.,  MIMS EPrint 2016.51, allows us to use essentially all the strong linearizations of polynomial matrices developed in the last fifteen years to construct strong linearizations of any rational matrix by expressing such matrix in terms of its polynomial and strictly proper parts.   
\end{abstract}

\begin{keyword}
	rational matrix \sep rational eigenvalue problem \sep strong block minimal bases pencil \sep strong linearization \sep recovery of eigenvectors, symmetric strong linearization, Hermitian strong linearization
	 
\medskip\textit{AMS subject classifications}: 65F15, 15A18, 15A22, 15A54, 93B18, 93B20, 93B60
\end{keyword}

\date{ }
\author[uc3m]{Froil\'an M. Dopico\fnref{fn1}}
\ead{dopico@math.uc3m.es}
\author[upv]{Silvia Marcaida\corref{cor1}\fnref{fn2}}
\ead{silvia.marcaida@ehu.eus}
\author[uc3m]{María C. Quintana\fnref{fn1}}
\ead{maquinta@math.uc3m.es}

\cortext[cor1]{Corresponding author}

\address[uc3m]{Departamento de Matem\'aticas,
	Universidad Carlos III de Madrid, Avda. Universidad 30, 28911 Legan\'es, Spain.}
\address[upv]{Departamento de Matem\'{a}tica Aplicada y Estadística e Investigación Operativa,
	Universidad del Pa\'{\i}s Vasco UPV/EHU, Apdo. Correos 644, Bilbao 48080, Spain.}

\fntext[fn1]{Supported by ``Ministerio de Econom\'ia, Industria y Competitividad (MINECO)'' of Spain and ``Fondo Europeo de Desarrollo Regional (FEDER)'' of EU through grants MTM2015-65798-P and MTM2017-90682-REDT. The research of M. C. Quintana is funded by the “contrato predoctoral” BES-2016-076744 of MINECO.} 
\fntext[fn2]{Supported by ``Ministerio de Econom\'ia, Industria y Competitividad (MINECO)'' of Spain and ``Fondo Europeo de Desarrollo Regional (FEDER)'' of EU through grants MTM2017-83624-P and MTM2017-90682-REDT, and by UPV/EHU through grant GIU16/42.}

\maketitle

\section{Introduction}

In recent years, the interest in solving the rational eigenvalue problem (REP) has grown as it arises in many applications, either directly or as an approximation of other nonlinear eigenvalue problems, see \cite{eigenmode,guttel-tisseur,guttel-NLEIGS,automatic,Mehrmannnonlineareigenvalue,SuBai}. There are several algorithms for its numerical resolution and, since the appearance of \cite{SuBai}, using linearizations is one of the most competitive methods for solving REPs nowadays \cite{ratkrylov,cork}. This has led to a rigorous development of the theory of linearizations for rational matrices.

There are two different approaches in order to give a notion of linearization of a rational matrix. On the one hand, Alam and Behera give in \cite{AlBe16} a definition based on the fact that any rational matrix $G(\la)$ admits a right coprime matrix fraction description $G(\la)=N(\la)D(\la)^{-1},$ where $N(\la)$ and $D(\la)$ are polynomial matrices.  These linearizations preserve the finite pole and zero structure of the original matrix. In contrast, Amparan et al. give in \cite{strong} a new notion of linearization that not only preserves the finite but also the infinite structure of poles and zeros. These linearizations are called strong linearizations in \cite{strong}. This definition and other notions about rational matrices will be reviewed in Section 2.

Throughout this work, it is fundamental the fact that any rational matrix $G(\la)$ can be uniquely written as $G(\lambda)=D(\lambda)+G_{sp}(\lambda)$ where $D(\lambda)$ is a polynomial matrix, called the polynomial part of $G(\la),$ and $G_{sp}(\lambda)$ is a strictly proper rational matrix, called the strictly proper part of $G(\la).$ Thanks to this property, infinitely many strong linearizations of rational matrices are constructed in \cite{strong} considering ``strong block minimal bases pencils'' associated to their polynomial parts, see \cite{BKL}. These strong block minimal bases pencils are strong linearizations of the polynomial part and according to \cite{fiedler} include, modulo permutations, all the Fiedler-like linearizations of the polynomial part. Although strong block minimal bases pencils are one of the most important class of strong linearizations of polynomial matrices, the question whether or not other strong linearizations of rational matrices can be constructed  based on another kinds of strong linearizations of the polynomial part arises naturally. 

For answering the question posed in the previous paragraph, we construct in this paper strong linearizations of a rational matrix by using strong linearizations of its polynomial part $D(\la)$ that belong to the other important family of strong linearizations of polynomial matrices (which are not strong block minimal bases pencils in general), i.e., the so-called vector spaces of linearizations, originally introduced in \cite{MMMM}, further studied in \cite{singular,bivariate}, and recently extended in \cite{ortho}. In particular, we consider in this paper strong linearizations of $D(\lambda)$ that belong to the ansatz spaces $\mathbb{M}_{1}(D)$ or $\mathbb{M}_{2}(D),$ developed by Fa{\ss}bender and Saltenberger in \cite{ortho}. Therefore, the results in this paper are of interest when rational matrices with nontrivial polynomial part are considered, that is, rational matrices in which the polynomial part has degree greater than or equal to two.

As a consequence of the discussion above, we emphasize the following main conclusion of this work: the combination of the results in this paper and those in \cite{strong} allows us to construct very easily infinitely many strong linearizations of rational matrices via the following three-step strategy: (1) express the rational matrix as the sum of its polynomial and strictly proper parts; (2) construct \textit{any} of the strong linearizations of the polynomial part known so far; and (3) combine adequately that strong linearization with a minimal state-space realization of the stricly proper part. 

Next, another motivation of the results in this paper is discussed. In order to compute the eigenvalues of polynomial matrices from linearizations, the work \cite{Kressner-roman} shows that, for polynomial matrices of large degree, the use of the monomial basis to express the matrix leads to numerical instabilities. According to the algorithms in \cite{ratkrylov,SuBai,cork}, it is expected that this instability appears also while computing eigenvalues of REPs when the polynomial part of the rational matrix has large degree and is expressed in terms of the monomial basis. For that reason, it is of interest to consider rational matrices with polynomial parts expressed in other bases as the Chebyshev basis. In particular, in Sections \ref{sect:m1} and \ref{sect:m2}, we construct strong linearizations of rational matrices with polynomial parts expressed in terms of a basis that satisfies a three term recurrence relation. In addition, in Section \ref{sect:other}, we briefly discuss how to construct strong linearizations when the polynomial part is expressed in other bases. 
We emphasize that the construction of these new strong linearizations is a consequence of the theory of strong linearizations developed in \cite{strong} together with Lemma \ref{more}. More precisely, given a strong linearization of a rational matrix, Lemma \ref{more} allows us to obtain infinitely many strong linearizations of the rational matrix by using strict equivalence with a certain structure.  

The rest of this paper is organized as follows. In Section \ref{recovery}, we show how to recover the eigenvectors of the rational matrix from those of its strong linearizations constructed in Sections \ref{sect:m1} and \ref{sect:m2}. Moreover, given a symmetric rational matrix, in Section \ref{sect:sym} we construct strong linearizations that preserve its symmetric structure by using symmetric realizations of the strictly proper part, which are introduced in Section \ref{sect:realsym}, and strong linearizations in the double ansatz space $\DM(D)$ \cite{ortho} of the polynomial part. In Section \ref{sect:herm}, we present analogous results for Hermitian rational matrices. Finally, Section \ref{sect:con} is reserved for discussing the conclusions and lines of future work.

\section{Preliminaries}\label{prelim}

$\efe[\la]$ denotes the ring of polynomials with
coefficients in an arbitrary field $\efe,$ and $\efe(\la)$ the field of \textit{rational functions}, i.e., the field of fractions of $\efe[\la]$. $\efe(\la)^{p\times m},$ $\efe[\la]^{p\times m}$ and $ \efe^{p\times m}$ denote the sets of $p\times m$ matrices with elements in $\efe(\la),$ $\efe[\la]$ and $\efe,$ respectively. The elements of $\efe(\la)^{p\times m}$ and $\efe[\la]^{p\times m}$ are called rational and polynomial matrices, respectively. A polynomial matrix $P(\lambda)=\sum_{i=0}^{k}\lambda^{i}P_{i}$ with $P_{i}\in\efe^{p\times m}$ is said to have \textit{degree} $k$ if $P_{k}\neq 0.$ If $k=1$ or $k=0$ then $P(\la)$ is said to be a \textit{pencil}. Matrices in $\efe[\la]^{m\times m}$ with nonzero constant determinant are said to be \textit{unimodular}. Two rational matrices $Q(\lambda),R(\lambda)\in \efe(\la)^{p\times m}$ are said to be \textit{unimodularly equivalent} if there exist unimodular matrices $U(\lambda)\in \efe[\la]^{p\times p}$ and $V(\lambda)\in\efe[\la]^{m\times m}$ such that $U(\la)Q(\la)V(\la)=R(\la).$ Moreover, $Q(\lambda)$ and $R(\lambda)$ are said to be \textit{strictly equivalent} if $UQ(\la)V=R(\la)$ with $U\in \efe^{p\times p}$ and $V\in \efe^{m\times m}$ invertible matrices.

A rational matrix $G(\la)\in\F(\la)^{p\times m}$ is said to be \textit{regular} or \textit{nonsingular} if $p=m$ and its determinant, $\det G(\la),$ is not identically equal to zero. Otherwise, $G(\la)$ is said to be \textit{singular}. Given any rational matrix $G(\la)\in\F(\la)^{p\times m},$ \textit{the (finite) eigenvalues of $G(\la)$} are defined as the scalars $\la_0\in\overline{\F}$ (the algebraic closure of $\FF$) such that $G(\la_0)\in\overline{\F}^{p\times m}$ and $\rank G(\la_0)< \displaystyle\max_{\mu\in\overline{\F}}\rank G(\mu).$ The \textit{rational eigenvalue problem} (REP) consists of finding the eigenvalues of $G(\la).$ If $G(\la)\in\F(\la)^{m\times m}$ is regular, which is the most common case in applications of REPs, the REP is equivalent to the problem of finding scalars $\la_0\in\overline{\F}$ such that there exist nonzero constant vectors $x\in\overline{\efe}^{m\times 1}$ and $y\in\overline{\efe}^{m\times 1}$ satisfying $$G(\la_0)x=0 \quad\mbox{and}\quad y^{T}G(\la_0)=0,$$
respectively.
The vectors $x$ are called \textit{right eigenvectors associated to $\la_0$}, and the vectors $y$ \textit{left eigenvectors}. Although it is not common in the literature, if $G(\la)\in\F(\la)^{p\times m}$ is singular, we call in this paper right and left eigenvectors of $G(\la)$ associated to an eigenvalue $\la_0$ to any nonzero vectors $x\in\overline{\efe}^{m\times 1}$ and $y\in\overline{\efe}^{p\times 1}$ satisfying  $G(\la_0)x=0$ and  $y^{T}G(\la_0)=0,$ respectively.

 The finite poles and zeros of a rational matrix $G(\la)$ are the roots in $\overline{\FF}$ of the polynomials that appear on the denominators and numerators, respectively, in its \textit{(finite) Smith--McMillan form} (see \cite{strong, Rosen70, Vard91}). Then the finite eigenvalues of $G(\la)$ are the finite zeros that are not poles.

 For solving the REP, and many other problems on rational matrices, it is useful to consider the fact that any rational matrix $G(\la)\in\FF(\la)^{p\times m}$  can be written as
\begin{equation}\label{s1.eqrealizG}
G(\la)=D(\la)+C(\la)A( \la)^{-1}B(\la)
\end{equation}
for some nonsingular polynomial matrix  $A(\la)\in\FF[\la]^ {n\times n}$ and polynomial matrices
$B(\la)\in\FF[\la]^{n\times m}$, $C(\la)\in\FF[\la]^{p\times n}$ and $D(\la)\in\FF[\la]^{p\times m}$ (see \cite{Rosen70}). The polynomial matrix
\begin{equation}\label{s1.eqpolsysmat}
P(\la)=\begin{bmatrix}
A(\la) & B(\la)\\
-C(\la) & D(\la)
\end{bmatrix}
\end{equation}
is called a \textit{polynomial system matrix} of $G(\la)$, i.e, $G(\la)$ is the Schur complement of $A(\la)$ in $P(\la)$. Then
$G(\la)$ is called the \textit{transfer function matrix} of
$P(\la),$ and  $\deg(\det A(\la))$ the \textit{order} of $P(\la),$  where $\deg(\cdot)$ stands for degree. Moreover, $P(\la)$ is said to have \textit{least order}, or to be \textit{minimal},
if its order is the smallest integer for which polynomial matrices $A(\la)$ (nonsingular), $B(\la)$,
$C(\la)$ and $D(\la)$ satisfying (\ref{s1.eqrealizG}) exist. The least order is uniquely determined
by $G(\la)$ and is denoted by $\nu(G(\la))$. It is also called the \textit{least order} of $G(\la)$
(\cite[Chapter 3, Section 5.1]{Rosen70} or \cite[Section 1.10]{Vard91}). From \cite[Chapter 3, Theorem 4.1]{Rosen70}, it can be deduced that $\nu(G(\la))$ is the degree of the polynomial that results by making the product of the denominators in the (finite) Smith--Mcmillan form of $G(\la).$  The regular REP $G(\la)x=0$ is related to the polynomial eigenvalue problem (PEP) $P(\la)z=0$ as is shown in \cite[Proposition 3.1]{strong}.

A rational function $r(\la)=\frac{n(\la)}{d(\la)}$ is said to be \textit{proper} if $\deg(n(\la))\leq\deg(d(\la)),$ and \textit{strictly proper} if $\deg(n(\la))<\deg(d(\la)).$ Let us denote $\F_{pr}(\lambda)$ the ring of proper rational functions. Its units are called \textit{biproper rational functions}, i.e., rational functions having the same degree of numerator and denominator. $\F_{pr}(\lambda)^{p\times m}$ denotes the set of $p\times m$ matrices with entries in $\F_{pr}(\lambda),$ which are called \textit{proper matrices}. A \textit{biproper matrix} is a square proper matrix whose determinant is a biproper rational function.

By the division algorithm for polynomials, any rational function $r(\la)\in\FF(\la)$ can be uniquely written as $r(\la)=p(\la)+r_{sp}(\la),$
where $p(\la)$ is a polynomial and $r_{sp}(\la)$ a strictly proper rational function. Therefore, any rational matrix $G(\la)\in\FF(\la)^{p\times m}$ can be uniquely written as
\begin{equation}\label{eq.polspdec}
G(\la)=D(\la)+G_{sp}(\la)
\end{equation}
where $D(\la)\in\FF[\la]^{p\times m}$ is a polynomial matrix and $G_{sp}(\la)\in \F_{pr}(\lambda)^{p\times m}$ is a
\textit{strictly proper rational matrix}, i.e., the entries of $G_{sp}(\la)$ are strictly
proper rational functions. As said in the introduction, $D(\la)$ is called the polynomial part of $G(\la)$ and $G_{sp}(\la)$ its strictly proper part.

The polynomial system matrix $P(\la)$ of $G(\la)$ is said to be a polynomial system matrix in
\textit{state-space form} if $A(\la)=\la I_n -A$, $B(\la)=B$ and $C(\la)=C$ for some constant matrices $A\in\FF^{n\times n}$, $B\in\FF^{n\times m}$ and
$C\in\FF^{p\times n}.$ It is known that any strictly proper rational matrix admits \textit{state-space realizations} (see \cite{Rosen70}
or \cite{Kailath80}). This means that for some positive integer $n$ there exist constant matrices $A\in\FF^{n\times n}$, $B\in\FF^{n\times m}$ and
$C\in\FF^{p\times n}$ such that $G_{sp}(\la)= C(\la I_n-A)^{-1}B$ and
$$
\begin{bmatrix}
\la I_n-A & B\\
-C & D(\la)
\end{bmatrix}
$$
is a polynomial system matrix of $G(\la)$. Therefore $G(\la)=D(\la)+C(\la I_n-A)^{-1}B.$
In addition, the state-space realization may always be taken of least order, or minimal, (i.e., such that the polynomial
system matrix in state-space form is of least order).

Rational matrices may also have infinite eigenvalues. In order to define them, we need the notion of reversal. 

\begin{deff}\label{reversal}
	Let $G(\lambda)\in\F(\lambda)^{p\times m}$ be a rational matrix expressed in the form \eqref{eq.polspdec}. We define the reversal of $G(\la)$ as the rational matrix
	\begin{equation*} 
	\rev G(\lambda)=\lambda^{d}G\left(\dfrac{1}{\lambda}\right)
	\end{equation*}
	where $d= \deg (D(\lambda))$ if $G(\lambda)$ is not strictly proper, and $d=0$ otherwise. 
\end{deff}

Notice that this definition extends the definition of reversal for polynomial matrices (see \cite[Definition 2.12]{spectral} or \cite[Definition 2.2]{MMMM}). Moreover, note that $d=0$ if and only if $G(\lambda)$ is proper. Following the usual definition in polynomial matrices \cite[Definition 2.3]{MMMM}, we say that $G(\la)$ has an \textit{eigenvalue at infinity} if $\rev G(\lambda)$ has an eigenvalue at $\la=0.$ If $G(\la)$ has an eigenvalue at infinity, we say that $z$ is a \textit{right} (respectively \textit{left}) \textit{eigenvector associated to infinity} if $z$ is a right (respectively left) eigenvector associated to $0$ of $\rev G(\lambda).$

\begin{rem} \rm \textit{Poles} and \textit{zeros at infinity} of a rational matrix $G(\la)$ are defined as the poles and zeros at $\la=0$ of $G(1/\la)$ (see \cite{Kailath80}). If $G(\lambda)$ is not proper, i.e., $\deg (D(\lambda))\geq 1,$ $G(\lambda)$ has always a pole at $\infty$ (see \cite{AmMaZa15}). Thus, if we define the eigenvalues at infinity of $G(\la)$ as those zeros that are not poles at infinity, any non-proper $G(\la)$ would not have eigenvalues at infinity. In particular, this would happen if $G(\la)$ is a polynomial matrix. Therefore, as in the polynomial case, we have considered $\rev G(\lambda)$ in order to define eigenvalues at infinity.
\end{rem}

Next we present the definition of strong linearization for a rational matrix given in \cite{strong}. This definition contains the notion of first invariant order at infinity $q_{1}$ of a rational matrix $G(\lambda).$ For any non strictly proper rational matrix  this number is $-\deg(D(\lambda))$ where $D(\lambda)$ is the polynomial part of $G(\lambda)$ in the expression \eqref{eq.polspdec}; otherwise, $q_{1} > 0.$ More information can be found in \cite{AmMaZa15,strong,Vard91}. 

\begin{deff}\textbf{\cite[Definition 6.2]{strong}}\label{def_stronglin}
	Let $G(\la) \in\F(\la)^{p\times m}$. Let $q_1$ be its first invariant order at infinity and $g=\min(0,q_1)$.
	Let $n=\nu(G(\la))$. A strong linearization of $G(\la)$ is a linear polynomial matrix
	\begin{equation}\label{eq_lin_inf}
		\mathcal{L}(\la)=\left[\begin{array}{cc}
			A_1 \la +A_0 &B_1 \la +B_0\\-(C_1 \la +C_0)&D_1 \la +D_0
		\end{array}\right]\in\F[\la]^{(n+q)\times (n+r)}
	\end{equation}
	such that the following conditions hold:
	\begin{itemize}
		\item[(a)] if $n>0$ then $\det(A_1\la+A_0)\neq 0$, and
		\item[(b)] if $\wh{G}(\la)=(D_1\la+D_0)+(C_1\la+C_0)(
		A_1\la+A_0)^{-1}(B_1\la+B_0)$, $\wh{q}_{1}$ is its first invariant order at infinity and $\wh{g}=\min(0,\wh{q}_1)$ then:
		\begin{itemize}
			\item [(i)] there exist nonnegative integers $s_1,s_2,$ with $s_1-s_2=q-p=r-m,$ and unimodular matrices $U_1(\la)\in\F[\la]^{(p+s_1)\times (p+s_1)}$ and
			$U_2(\la)\in\F[\la]^{(m+s_1)\times (m+s_1)}$ such that $$U_1(\la)\diag(G(\la),I_{s_1})U_2(\la)=\diag(\wh{G}(\la),I_{s_2})\text{, and}$$
			\item [(ii)] there exist biproper matrices $B_1(\la)\in\F_{pr}(\la)^{(p+s_1)\times (p+s_1)}$ and $B_2(\la)\in$\\ $\F_{pr}(\la)^{(m+s_1)\times (m+s_1)}$ such that  
			$$B_1(\la)\diag(\la^{g}G(\la),I_{s_1})B_2(\la)=\diag(\la^{\wh{g}}\wh{G}(\la),I_{s_2}).$$
		\end{itemize}
	\end{itemize}
\end{deff}

It may seem that the integer $\nu(G(\la))$ has to be previously known in order to verify that a linear polynomial matrix as in \eqref{eq_lin_inf} is a strong linearization of $G(\la)$. However, there are conditions to ensure that the size of $A_1\la+A_0$ is $n=\nu(G(\la)).$ We state them in Proposition \ref{leastorder}.
\begin{prop}\label{leastorder}
 Let
 \begin{equation*}
 \mathcal{L}(\la)=\left[\begin{array}{cc}
 A_1 \la +A_0 &B_1 \la +B_0\\-(C_1 \la +C_0)&D_1 \la +D_0
 \end{array}\right]\in\F[\la]^{(n+q)\times (n+r)}
 \end{equation*} be a linear polynomial matrix with $n>0$ and $\det(A_1\la+A_0)\neq 0.$ Assume that there exist nonnegative integers $s_1,s_2,$ with $s_1-s_2=q-p=r-m,$ and unimodular matrices $U_1(\la)\in\F[\la]^{(p+s_1)\times (p+s_1)}$ and
 $U_2(\la)\in\F[\la]^{(m+s_1)\times (m+s_1)}$ such that
 \begin{equation}\label{eq_unimodular}
U_1(\la)\diag(G(\la),I_{s_1})U_2(\la)=\diag(\wh{G}(\la),I_{s_2}),
 \end{equation}
 where $\wh{G}(\la)=(D_1\la+D_0)+(C_1\la+C_0)(
 A_1\la+A_0)^{-1}(B_1\la+B_0).$ Then $n=\nu(G(\la))$ if and only if the following conditions hold:
 \begin{itemize}
 	\item[a)] $A_{1}$ is invertible, and
 	\item[b)] $\rank\begin{bmatrix} A_1 \mu + A_0 \\ C_1 \mu + C_0 \end{bmatrix}=\rank\begin{bmatrix} A_1 \mu + A_0  & B_1 \mu + B_0  \end{bmatrix}=n$ for all $\mu\in\overline{\F}.$ 
 \end{itemize}  
\end{prop}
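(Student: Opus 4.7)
The plan is to reduce the question to a characterization of minimality of the polynomial system matrix
$$P(\la) = \begin{bmatrix} A_1\la+A_0 & B_1\la+B_0 \\ -(C_1\la+C_0) & D_1\la+D_0 \end{bmatrix},$$
which has $\hat{G}(\la)$ as transfer function and order equal to $\deg(\det(A_1\la+A_0)) \leq n$. The first observation is that the unimodular equivalence \eqref{eq_unimodular} implies that the finite Smith--McMillan form of $\mathrm{diag}(G(\la), I_{s_1})$ and $\mathrm{diag}(\hat{G}(\la), I_{s_2})$ coincide, and since the identity blocks contribute only trivial invariant rational functions (with denominator $1$), the products of denominators in the finite Smith--McMillan forms of $G(\la)$ and $\hat{G}(\la)$ are identical. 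By the characterization of $\nu(\cdot)$ recalled after \eqref{s1.eqpolsysmat}, this yields $\nu(G(\la)) = \nu(\hat{G}(\la))$, so it suffices to prove that $n = \nu(\hat{G}(\la))$ if and only if (a) and (b) hold.

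For the forward direction, I will use the standard inequalities $\nu(\hat{G}(\la)) \leq \deg(\det(A_1\la+A_0)) \leq n$ (the first because $P(\la)$ is \emph{some} polynomial system matrix of $\hat{G}(\la)$ and $\nu$ is the least such order, the second because $A_1\la+A_0$ is $n\times n$). Assuming $n = \nu(\hat{G}(\la))$ forces equality throughout: $\deg(\det(A_1\la+A_0)) = n$ is equivalent to $A_1$ being invertible, giving (a), while $\nu(\hat{G}(\la))$ coinciding with the order of $P(\la)$ means exactly that $P(\la)$ is a minimal polynomial system matrix of $\hat{G}(\la)$.

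For the reverse direction, under (a) the order of $P(\la)$ equals $n$, so all that is needed is that $P(\la)$ be minimal, which forces $\nu(\hat{G}(\la)) = n$. Minimality of a polynomial system matrix is classically equivalent to relative left primeness of $A(\la)$ and $B(\la)$ together with relative right primeness of $A(\la)$ and $C(\la)$ (see, for instance, \cite[Chapter 3]{Rosen70}). For polynomial matrices, these primeness conditions are in turn equivalent to the pointwise rank conditions
$$\rank\begin{bmatrix} A_1\mu+A_0 & B_1\mu+B_0 \end{bmatrix} = \rank\begin{bmatrix} A_1\mu+A_0 \\ C_1\mu+C_0 \end{bmatrix} = n \quad \text{for all } \mu\in\overline{\F},$$
via the Smith form criterion for coprimeness. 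This is precisely condition (b).

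The main obstacle is therefore not conceptual but bookkeeping: ensuring that the classical Rosenbrock/Kailath minimality characterization applies in the pencil setting of \eqref{eq_lin_inf} and that the unimodular equivalence \eqref{eq_unimodular} really transfers the finite pole data between $G(\la)$ and $\hat{G}(\la)$ despite the presence of the $I_{s_1}, I_{s_2}$ blocks of different sizes. Both facts are standard, so once they are carefully cited the proof collapses to the chain of (in)equalities $n \geq \deg(\det(A_1\la+A_0)) \geq \nu(\hat{G}(\la)) = \nu(G(\la))$, together with the coprimeness characterization that matches (b).
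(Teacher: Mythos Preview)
Your proposal is correct and follows essentially the same route as the paper's proof: both establish $\nu(G(\la))=\nu(\wh G(\la))$ from the unimodular equivalence \eqref{eq_unimodular} (the paper cites \cite[Lemma~3.4]{strong}, you argue it directly via the Smith--McMillan denominators), then use the chain $\nu(\wh G(\la))\le \deg(\det(A_1\la+A_0))\le n$ together with Rosenbrock's coprimeness characterization of minimality to identify (a) and (b) with the two equalities in that chain.
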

\begin{proof} Condition $b)$ is equivalent to $\mathcal{L}(\la)$ being a minimal polynomial system matrix, since $\det(A_1\la+A_0)\neq 0,$ see \cite[Chapters 2 and 3]{Rosen70}. By condition \eqref{eq_unimodular} and \cite[Lemma 3.4]{strong}, we have that $\nu(G(\la))=\nu(\wh{G}(\la)).$ Assume that $n=\nu(G(\la)).$ Thus, $\nu(\wh{G}(\la))=n,$ and $\deg(\det(A_{1}\la+A_0))\geq \nu(\wh{G}(\la))=n.$ However, $\deg(\det(A_{1}\la+A_0))\leq n.$ Therefore, $\deg(\det(A_{1}\la+A_0))=n$ and $\deg(\det(A_{1}\la+A_0))= \nu(\wh{G}(\la)),$ which imply conditions $a)$ and $b),$ respectively. We assume now that conditions $a)$ and $b)$ hold. On the one hand, $A_1$ being invertible implies that $\deg(\det(A_{1}\la+A_0))=n.$ On the other hand, $\mathcal{L}(\la)$ being a minimal polynomial system matrix means that $\deg(\det(A_{1}\la+A_0))=\nu(\wh{G}(\la)).$ Therefore, $ n=\nu(\wh{G}(\la))=\nu(G(\la)).$       
	
\end{proof}

 It is known \cite{strong} that if condition {\em (i)} in Definition \ref{def_stronglin} holds, then condition {\em (ii)} is equivalent to the existence of unimodular matrices $W_1(\la)$ and $W_2(\la)$ such that
 \begin{equation} \label{eq.unimodularoverla}
 W_1(\la)\diag\left(\frac{1}{\la^{g}} G\left(\frac{1}{\la}\right),I_{s_1}\right)
 W_2(\la)=\diag\left(\frac{1}{\la^{\wh{g}}}\wh{G}\left(\frac{1}{\la}\right), I_{s_2}\right).
 \end{equation}
 In Definition \ref{def_stronglin} it can always be taken $s_1=0$ or $s_2=0,$ according to $p\geq q$ and $m\geq r$ or $ q\geq p$ and $r\geq m$. In what follows we will consider  $s_1\geq 0$ and $s_2=0$.  Notice that with this choice and with the notion of reversal given in Definition \ref{reversal}, \eqref{eq.unimodularoverla} is equivalent to
  \begin{equation} \label{eq.unimodularoverla2}
  W_1(\la)\diag\left(\rev G(\la),I_{s_1}\right)
  W_2(\la)=\rev \wh{G}(\la).
  \end{equation}
  
\begin{rem} \rm 
	Notice that Definition \ref{def_stronglin} extends the notion of strong linearization of polynomial matrices in the usual sense \cite[Definition 2.5]{MMMM}. In particular, if $G(\la)$ is a polynomial matrix, then $n=\nu(G(\la))=0.$ Therefore, a strong linearization $\mathcal{L}(\la)$ of $G(\la)$ is of the form $\mathcal{L}(\la)=D_{1}\la + D_0,$ with $\wh{G}(\la)=\mathcal{L}(\la),$ $g=q_1=-\deg (G(\la))$ and $\wh{g}=\wh{q}_{1}=-\deg (\mathcal{L}(\la)).$  
\end{rem}

Let us denote by $\mathcal{N}_r (G(\la))$ and $\mathcal{N}_\ell (G(\la))$ the \textit{right and left null-spaces} over $\FF(\la)$ of $G(\la)$, respectively, i.e.,
if $G(\la)\in\FF(\la)^{p\times m}$,
\[
\begin{array}{l}
\mathcal{N}_r (G(\la))=\{x(\la)\in\FF(\la)^{m\times 1}: G(\la)x(\la)=0\},\\
\mathcal{N}_\ell (G(\la))=\{x(\la)\in\FF(\la)^{p\times 1}: x(\la)^TG(\la)=0\}.

\end{array}
\]
When $G(\lambda)$ is singular at least one of these null spaces is nontrivial. A spectral characterization of strong linearizations is given in \cite[Theorem 6.11]{strong}. It says that $\mathcal{L}(\la)$ is a strong linearization of $G(\la)$ if and only if $\mbox{\rm dim} \, \mathcal{N}_r (G(\la)) = \mbox{\rm dim} \,  \mathcal{N}_r ( \mathcal{L}(\la) )$  and $\mathcal{L}(\la)$ preserves the finite and infinite structures of poles and zeros of $G(\la)$ in the sense of \cite[Definition 6.10]{strong}. This characterization is the key property of strong linearizations in the realm of REPs.
\begin{rem} \rm The equality $\mbox{\rm dim} \, \mathcal{N}_r (G(\la)) = \mbox{\rm dim} \,  \mathcal{N}_r (\mathcal{L}(\la))$
 	is equivalent to $\mbox{\rm dim} \, \mathcal{N}_\ell (G(\la)) = \mbox{\rm dim} \,  \mathcal{N}_\ell (\mathcal{L}(\la)).$ Consider $G(\la)\in\FF(\la)^{p\times m}$ and $\mathcal{L}(\la)\in\FF(\la)^{(p+q)\times (m+q)}$ with $q\geq 0.$  By the rank-nullity theorem $\dim \mathcal{N}_\ell (G(\la))= p-\rank G(\la)$ and
 	$\dim \mathcal{N}_r (G(\la))=m-\rank G(\la)$. Therefore 
 	$\rank \mathcal{L}(\la)=q+\rank G(\la)$ if and only if
 	$\mbox{\rm dim} \, \mathcal{N}_r (G(\la)) = \mbox{\rm dim} \,  \mathcal{N}_r (\mathcal{L}(\la) ).$ And $\rank \mathcal{L}(\la)=q+\rank G(\la)$ if and only if
 	$\mbox{\rm dim} \, \mathcal{N}_\ell (G(\la)) = \mbox{\rm dim} \,  \mathcal{N}_\ell (\mathcal{L}(\la))$.
 	
 \end{rem}

Lemma \ref{more} follows from Definition \ref{def_stronglin}. It shows an easy way to obtain strong linearizations for a rational matrix $G(\lambda)$ from a particular strong linearization $\mathcal{L}(\lambda)$ by multiplying $\mathcal{L}(\lambda)$ by some appropriate matrices. This simple result is fundamental in this paper, and we conjecture that it will be fundamental for constructing (in the future) other families of strong linearizations of rational matrices.

\begin{lem}\label{more} Let $G(\lambda)\in\F(\lambda)^{p\times m}$ be a rational matrix, and let $$\mathcal{L}_{1}(\lambda)=\left[\begin{array}{cc}
	A_{1}\lambda + A_{0} & B_{1}\lambda + B_{0} \\
	-(C_{1}\lambda + C_{0}) & D_{1}\lambda + D_{0}
	\end{array}\right]\in\F[\la]^{(n+(p+s))\times (n+(m+s))} $$ be a strong linearization of $G(\lambda).$ Consider $Q_{1},Q_{3}\in\F^{n\times n},$ $Q_{2}\in\F^{(p+s)\times (p+s)},$ $Q_{4}\in\F^{(m+s)\times (m+s)}$ nonsingular matrices, $W\in\F^{(p+s)\times n},$ and $Z\in\F^{n\times (m+s)}.$ Then the linear polynomial matrix
	$$\mathcal{L}_{2}(\lambda)=\left[ \begin{array}{cc}
	Q_{1} & 0 \\
	W & Q_{2}
	\end{array} \right]\mathcal{L}_{1}(\lambda)\left[ \begin{array}{cc}
	Q_{3} & Z \\
	0 & Q_{4}
	\end{array} \right]$$ is a strong linearization of $G(\lambda).$

\end{lem}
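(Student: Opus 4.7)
The plan is to verify directly that $\mathcal{L}_{2}(\lambda)$ satisfies the two items of Definition \ref{def_stronglin}. The key observation, which reduces the problem to almost nothing, is that the transfer function obtained by taking the Schur complement of the $(1,1)$ block of $\mathcal{L}_{2}(\lambda)$ differs from that of $\mathcal{L}_{1}(\lambda)$ only by left and right multiplication by the constant invertible matrices $Q_{2}$ and $Q_{4}$.

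First I would perform the block multiplication explicitly. Writing $A(\lambda)=A_{1}\lambda+A_{0}$, etc., the new $(1,1)$ block is $\widetilde{A}(\lambda)=Q_{1}A(\lambda)Q_{3}$, which is a nonsingular pencil (condition (a) of Definition \ref{def_stronglin}) because $Q_{1}$, $Q_{3}$ and $A(\lambda)$ are nonsingular. The other blocks are $\widetilde{B}(\lambda)=Q_{1}(A(\lambda)Z+B(\lambda)Q_{4})$, $-\widetilde{C}(\lambda)=(WA(\lambda)-Q_{2}C(\lambda))Q_{3}$, and $\widetilde{D}(\lambda)=(WA(\lambda)-Q_{2}C(\lambda))Z+(WB(\lambda)+Q_{2}D(\lambda))Q_{4}$. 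A direct computation of the Schur complement gives
\begin{align*}
\widetilde{G}(\lambda) &= \widetilde{D}(\lambda)+\widetilde{C}(\lambda)\widetilde{A}(\lambda)^{-1}\widetilde{B}(\lambda) \\
&= \widetilde{D}(\lambda)+(Q_{2}C(\lambda)-WA(\lambda))A(\lambda)^{-1}(A(\lambda)Z+B(\lambda)Q_{4}),
\end{align*}
and after expanding and cancelling the cross terms involving $WA(\lambda)\cdot A(\lambda)^{-1}$ and the terms in $Z$, one obtains
\begin{equation*}
\widetilde{G}(\lambda)=Q_{2}\bigl(D(\lambda)+C(\lambda)A(\lambda)^{-1}B(\lambda)\bigr)Q_{4}=Q_{2}\,\widehat{G}(\lambda)\,Q_{4},
\end{equation*}
where $\widehat{G}(\lambda)$ is the transfer function of $\mathcal{L}_{1}(\lambda)$.

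With this identity in hand, conditions (b)(i) and (b)(ii) follow automatically from the corresponding conditions for $\mathcal{L}_{1}(\lambda)$. Indeed, if $U_{1}(\lambda)\,\diag(G(\lambda),I_{s})\,U_{2}(\lambda)=\widehat{G}(\lambda)$ with $U_{1}(\lambda),U_{2}(\lambda)$ unimodular, then
\begin{equation*}
(Q_{2}U_{1}(\lambda))\,\diag(G(\lambda),I_{s})\,(U_{2}(\lambda)Q_{4})=Q_{2}\widehat{G}(\lambda)Q_{4}=\widetilde{G}(\lambda),
\end{equation*}
and the outer factors remain unimodular because $Q_{2},Q_{4}$ are constant invertible. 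The reversal condition is handled analogously using \eqref{eq.unimodularoverla2}: since $\widetilde{G}(\lambda)=Q_{2}\widehat{G}(\lambda)Q_{4}$ with $Q_{2},Q_{4}$ constant, the polynomial part of $\widetilde{G}(\lambda)$ has the same degree as that of $\widehat{G}(\lambda)$, so the first invariant orders at infinity coincide, and $\rev\widetilde{G}(\lambda)=Q_{2}\,\rev\widehat{G}(\lambda)\,Q_{4}$; the biproper factors for $\mathcal{L}_{1}(\lambda)$ are then modified by left multiplication by $Q_{2}$ and right multiplication by $Q_{4}$, which preserves biproperness. Finally, the size of $\widetilde{A}(\lambda)$ is still $n=\nu(G(\lambda))$ by construction, so no additional check is needed.

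The only step that requires care is the algebraic cancellation that produces $\widetilde{G}(\lambda)=Q_{2}\widehat{G}(\lambda)Q_{4}$; the sign in front of $C(\lambda)$ in the polynomial system matrix form and the need to keep $W$ and $Z$ general (not assumed small or structured) make it worth writing the computation explicitly, but after that all remaining verifications are formal consequences of the fact that constant invertible matrices are both unimodular and biproper.
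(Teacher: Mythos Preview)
Your proof is correct and follows essentially the same approach as the paper: both establish the key identity $\widetilde{G}(\lambda)=Q_{2}\widehat{G}(\lambda)Q_{4}$ for the transfer functions and then push the constant invertible factors $Q_{2},Q_{4}$ onto the unimodular and biproper matrices in conditions (b)(i) and (b)(ii). The only difference is that you carry out the Schur-complement cancellation explicitly, whereas the paper simply asserts the identity $\widehat{G}_{2}(\lambda)=Q_{2}\widehat{G}_{1}(\lambda)Q_{4}$ without displaying the intermediate algebra.
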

\begin{proof} Let us write
	$$\mathcal{L}_{2}(\lambda)=\left[\begin{array}{cc}
	A_{2}\lambda + \tilde{A}_{0} & B_{2}\lambda + \tilde{B}_{0} \\
	-(C_{2}\lambda + \tilde{C}_{0}) & D_{2}\lambda + \tilde{D}_{0}
	\end{array}\right].$$
	We have $\det (A_{2}\lambda + \tilde{A}_{0})\neq 0$ if $n>0,$ since $A_{2}\lambda + \tilde{A}_{0}= Q_{1}(A_{1}\lambda + A_{0})Q_{3}.$ Let us consider the transfer functions $\wh{G}_{1}(\lambda),$ $\wh{G}_{2}(\lambda)$ of $\mathcal{L}_{1}(\lambda),$ $\mathcal{L}_{2}(\lambda),$ respectively. They satisfy $\wh{G}_{2}(\lambda)=Q_{2}\wh{G}_{1}(\lambda)Q_{4}.$ Let $q_1$ be the first invariant order at infinity of $G(\la)$ and $g=\min(0,q_1)$. For $i=1,2,$ let $\wh{g}_i=\min(0,\wh{q}_i),$ where $\wh{q}_{i}$ is the first invariant order at infinity of $\wh{G}_{i}(\la).$ Since $\mathcal{L}_{1}(\lambda)$ is a strong linearization of $G(\lambda),$ there exist unimodular matrices $U_1(\la)$ and
	$U_2(\la)$ such that $U_1(\la)\diag(G(\la),I_{s})U_2(\la)=\wh{G}_1(\la),$ and biproper matrices $B_1(\la)$ and $B_2(\la)$ such that  
	$B_1(\la)\diag(\la^{g}G(\la),I_{s})B_2(\la)=\la^{\wh{g}_1}\wh{G}_1(\la).$ By using the equality $\wh{G}_{2}(\lambda)=Q_{2}\wh{G}_{1}(\lambda)Q_{4},$ we have that $\wh{g}_1=\wh{g}_2,$ and by the same equality, we get $$Q_2 U_1(\la)\diag(G(\la),I_{s})U_2(\la)Q_4=\wh{G}_2(\la),$$ and $$Q_2B_1(\la)\diag(\la^{g}G(\la),I_{s})B_2(\la)Q_4=\la^{\wh{g}_2}\wh{G}_2(\la).$$ 
	Then we obtain that conditions $(a)$ and $(b)$ in Definition \ref{def_stronglin} hold for $\mathcal{L}_{2}(\lambda).$
\end{proof}

Strong linearizations of a rational matrix $G(\lambda)$ expressed in the form \eqref{eq.polspdec} can be constructed from combining minimal state-space realizations of the strictly proper matrix $G_{sp}(\lambda)$ and strong linearizations of its polynomial part $D(\lambda).$ In particular, strong block minimal bases pencils associated to $D(\lambda)$ with sharp degree can be used (see \cite{strong}). The definition of this concept is taken from \cite{BKL} and appears also in \cite{strong}. As in \cite{BKL}, we will say that a polynomial matrix $K(\la)\in\F[\la]^{p\times m}$ (with $p<m$) is a \textit{minimal basis} if its rows form a minimal basis of the rational subspace they span (see \cite{forney}). Moreover, a minimal basis $N(\la)\in\F[\la]^{q\times m}$ is said to be \textit{dual} to $K(\la)$ if $p+q=m$ and $K(\la)N(\la)^{T}=0$ (see \cite[Definition 2.5]{BKL}).

\begin{deff}\textbf{\cite[Definition 8.1]{strong}} \label{def:minlinearizations} Let $D(\la) \in \FF[\la]^{p \times m}$ be a polynomial matrix. A strong block minimal bases pencil associated to $D(\la)$ is a linear polynomial matrix with the following structure
	\begin{equation}
	\label{eq:minbaspencil}
	\begin{array}{cl}
	\mathcal{L}(\la) =
	\left[
	\begin{array}{cc}
	M(\la) & K_2 (\la)^T \\
	K_1 (\lambda) &0
	\end{array}
	\right]&
	\begin{array}{l}
	\left. \vphantom{K_2 (\la)^T} \right\} {\scriptstyle p + \widehat{p}}\\
	\left. \vphantom{K_1 (\la)} \right\} {\scriptstyle \widehat{m}}
	\end{array}\\
	\hphantom{\mathcal{L}(\la) =}
	\begin{array}{cc}
	\underbrace{\hphantom{K_1 (\lambda)}}_{\scriptstyle m + \widehat{m}} & \underbrace{\hphantom{K_2 (\la)^T}}_{\widehat{p}}
	\end{array}
	\end{array}
	\>,
	\end{equation}
	where $K_1(\la) \in \FF[\la]^{\widehat{m} \times (m + \widehat{m})}$ (respectively $K_2(\la) \in \FF[\la]^{\widehat{p} \times (p + \widehat{p})}$) is a minimal basis with all its row degrees equal to $1$ and with the row degrees of a minimal basis $N_1(\la) \in \FF[\la]^{m \times (m + \widehat{m})}$ (respectively $N_2(\la) \in \FF[\la]^{p \times (p + \widehat{p})}$) dual to $K_1(\la)$ (respectively $K_2(\la)$) all equal, and such that
	\begin{equation} \label{eq:Dpolinminbaslin}
	D(\la) = N_2(\la) M(\la) N_1(\la)^T.
	\end{equation}
	If, in addition, $\deg(D(\la)) = \deg(N_2(\la)) +  \deg(N_1(\la)) + 1$ then $\mathcal{L}(\la)$ is said to be a strong block minimal bases pencil associated to $D(\la)$ with sharp degree.
\end{deff}

\begin{rem} \rm
	The following useful characterization of minimal bases will be used (see \cite[Main Theorem]{forney} or \cite[Theorem 2.2]{BKL}). Namely, $K(\la)\in\F[\la]^{p\times m}$ is a minimal basis if and only if $K(\la_{0})$ has full row rank for all $\la_{0}\in\overline{\F}$ and $K(\la)$ is \textit{row reduced}, i.e., its highest row degree coefficient matrix has full row rank (see \cite[Definition 2.1]{BKL}).
\end{rem}

\begin{rem} \rm A first application of the key Lemma \ref{more} is to construct strong linearizations of a rational matrix $G(\la)$ from any Fiedler-like strong linearization $L_F(\la)$ of its polynomial part $D(\la).$ For this purpose, note that \cite[Theorems 3.8, 3.15, 3.16]{fiedler} guarantee that there exist permutation matrices $\Pi_1$ and $\Pi_2$ and a strong block minimal bases pencil $L(\la)$ associated to $D(\la)$ such that $L_F(\la)=\Pi_1 L(\la)\Pi_2.$ In addition, Theorem 8.11 in \cite{strong} explains how to construct a strong linearization $\mathcal{L}(\la)$ of $G(\la)$ from $L(\la).$ Thus, according to Lemma 2.7, $\diag (I_n,\Pi_1) \mathcal{L}(\la) \diag (I_n,\Pi_2)$ is a strong linearization of $G(\la)$ based on $L_F(\la).$
\end{rem}

In what follows, the Kronecker product of two matrices $A$ and $B$, denoted by $A\otimes B ,$ will be used (see \cite[Chapter 4]{HyJ}).
	
\section{$\M_{1}$-strong linearizations}\label{sect:m1}
 In this section and in Section \ref{sect:m2} we present strong linearizations of square rational matrices $G(\la)$ with polynomial part $D(\la)$ expressed in an orthogonal basis. More precisely, we consider strong linearizations of $D(\lambda)$ that belong to the ansatz spaces $\mathbb{M}_{1}(D)$ or $\mathbb{M}_{2}(D),$ recently developed by H. Faßbender and P. Saltenberger in \cite{ortho}, and based on them, we construct strong linearizations of $G(\la)$ by using Lemma \ref{more} and the strong linearizations presented in \cite[Section 8.2]{strong}. 
 
 As said in the preliminaries, we consider an arbitrary field $\F$ throughout this paper, although the results in \cite{ortho} are stated only for the real field $\R.$ Nevertheless, the results of \cite{ortho} that are used in this paper are also valid for any field $\F.$ We consider a polynomial basis $\{\phi_{j}(\lambda)\}_{j=0}^{\infty}$ of $\F[\lambda],$ viewed as an $\F$-vector space, with $\phi_{j}(\lambda)$ a polynomial of degree $j,$  that satisfies the following three-term recurrence relation:

\begin{equation}\label{recu}
\alpha_{j}\phi_{j+1}(\lambda)=(\lambda-\beta_{j})\phi_{j}(\lambda)-\gamma_{j}\phi_{j-1}(\lambda) \quad j\geq 0
\end{equation}
where $\alpha_{j},\beta_{j},\gamma_{j}\in\F,$ $ \alpha_{j}\neq 0,$ $\phi_{-1}(\lambda)=0,$ and $\phi_{0}(\lambda)=1.$ Let $P(\lambda)\in\F[\lambda]^{m\times m}$ be a polynomial matrix of degree $k$ written in terms of this basis as follows

\begin{equation}\label{polynomial}
P(\lambda)=P_{k}\phi_{k}(\lambda)+P_{k-1}\phi_{k-1}(\lambda)+\cdots + P_{1}\phi_{1}(\lambda)+P_{0}\phi_{0}(\lambda).
\end{equation}
\\
We define $\Phi_{k}(\lambda)=[\phi_{k-1}(\lambda)\cdots \phi_{1}(\lambda)\text{ }\phi_{0}(\lambda)]^{T}$ and $V_{P}=\{v\otimes P(\lambda):v\in\F^{k}\},$ and we consider the set of pencils
$$\mathbb{M}_{1}(P)=\{L(\lambda)=\lambda X+Y: X,Y\in\F^{km\times km},\text{ }L(\lambda)(\Phi_{k}(\lambda)\otimes I_{m})\in V_{P}\}.$$ A pencil $L(\lambda)\in\mathbb{M}_{1}(P),$ which verifies $L(\lambda)(\Phi_{k}(\lambda)\otimes I_{m})=v\otimes P(\lambda)$ for some vector $v\in\F^{k},$ is said to have \textit{right ansatz vector} $v.$ A particular pencil in $\mathbb{M}_{1}(P)$ introduced in \cite[page 63]{ortho} is 

\begin{equation}\label{efe}
F_{\Phi}^{P}(\lambda)=\left[ {\begin{array}{cc}
	m_{\Phi}^{P}(\lambda) \\
	M_{\Phi}(\lambda)\otimes I_{m} \\
	\end{array} } \right]\in \F[\lambda]^{km\times km},
\end{equation}
where
\begin{equation*}\label{min}
m_{\Phi}^{P}(\lambda)=\left[ \dfrac{(\lambda-\beta_{k-1})}{\alpha_{k-1}}P_{k}+P_{k-1}\quad P_{k-2}-\dfrac{\gamma_{k-1}}{\alpha_{k-1}}P_{k}\quad P_{k-3}\quad \cdots\quad P_{1}\quad P_{0}\right],
\end{equation*}
and
\begin{equation*}\label{may}
M_{\Phi}(\lambda)=
\left[ {\begin{array}{cccccc}
	-\alpha_{k-2} & (\lambda -\beta_{k-2}) & -\gamma_{k-2} &  \\
	& -\alpha_{k-3} & (\lambda - \beta_{k-3}) & -\gamma_{k-3} &  \\
	& &\ddots &\ddots & \ddots &  \\
	& & &-\alpha_{1}& (\lambda-\beta_{1}) & -\gamma_{1} \\
	& & & &-\alpha_{0}& (\lambda-\beta_{0})
	\end{array} } \right] .
\end{equation*}
Since $m_{\Phi}^{P}(\lambda)(\Phi_{k}(\lambda)\otimes I_{m})=P(\lambda)$ and $(M_{\Phi}(\lambda)\otimes I_{m})(\Phi_{k}(\lambda)\otimes I_{m})=0,$ we get that $F_{\Phi}^{P}(\lambda)(\Phi_{k}(\lambda) \otimes I_{m})=e_{1}\otimes P(\lambda),$ where $e_1$ is the first canonical vector of $\F^{k}.$ Therefore, $F_{\Phi}^{P}(\lambda)\in \mathbb{M}_{1}(P)$ with right ansatz vector $e_{1}\in \F^{k}.$ This particular example is very important because, by using it, we can obtain all the elements in $\mathbb{M}_{1}(P).$ This follows from the next theorem.

\begin{theo}\textbf{\cite[Theorem 1]{ortho}}
	Let $P(\lambda)\in\F[\lambda]^{m\times m }$ be a polynomial matrix with degree $k\geq 2.$ Then $L(\lambda)\in\mathbb{M}_{1}(P)$ with right ansatz vector $v\in \F^{k}$ if and only if $$L(\lambda)=[v\otimes I_{m}\quad H]F_{\Phi}^{P}(\lambda)$$
	for some matrix $H\in\F^{km \times (k-1)m.}$
\end{theo}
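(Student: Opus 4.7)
For the ``if'' direction I would exploit the block decomposition of $F_{\Phi}^{P}(\lambda)$ into its top block $m_{\Phi}^{P}(\lambda)$ and bottom block $M_{\Phi}(\lambda)\otimes I_{m}$, together with the two identities recalled just before the statement, namely $m_{\Phi}^{P}(\lambda)(\Phi_{k}(\lambda)\otimes I_{m})=P(\lambda)$ and $(M_{\Phi}(\lambda)\otimes I_{m})(\Phi_{k}(\lambda)\otimes I_{m})=0$. For any $H\in\F^{km\times(k-1)m}$, a direct computation then gives $[v\otimes I_{m}\ \ H]F_{\Phi}^{P}(\lambda)(\Phi_{k}(\lambda)\otimes I_{m}) = (v\otimes I_{m})P(\lambda) = v\otimes P(\lambda)$, so $L(\lambda)=[v\otimes I_{m}\ \ H]F_{\Phi}^{P}(\lambda)$ lies in $\mathbb{M}_{1}(P)$ with right ansatz vector $v$. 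This is a short verification; the reverse implication carries the real content.

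For the ``only if'' direction, I start with $L(\lambda)\in\mathbb{M}_{1}(P)$ having right ansatz vector $v\in\F^{k}$ and subtract the ``obvious'' piece of the sought decomposition by setting $\tilde{L}(\lambda):=L(\lambda)-(v\otimes I_{m})m_{\Phi}^{P}(\lambda)$. Since $m_{\Phi}^{P}(\lambda)$ is a pencil, so is $\tilde{L}(\lambda)$, and $\tilde{L}(\lambda)(\Phi_{k}(\lambda)\otimes I_{m})=v\otimes P(\lambda)-(v\otimes I_{m})P(\lambda)=0$. The problem is thus reduced to showing that every pencil $\tilde{L}(\lambda)\in\F[\lambda]^{km\times km}$ annihilating $\Phi_{k}(\lambda)\otimes I_{m}$ can be written as $H(M_{\Phi}(\lambda)\otimes I_{m})$ for some constant $H\in\F^{km\times(k-1)m}$, for then $L(\lambda)=(v\otimes I_{m})m_{\Phi}^{P}(\lambda)+H(M_{\Phi}(\lambda)\otimes I_{m})=[v\otimes I_{m}\ \ H]F_{\Phi}^{P}(\lambda)$.

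The key fact that I would establish next is that the rows of $M_{\Phi}(\lambda)$ form a minimal basis of the left null space of the column $\Phi_{k}(\lambda)\in\F[\lambda]^{k\times 1}$, with all row degrees equal to $1$: the identity $M_{\Phi}(\lambda)\Phi_{k}(\lambda)=0$ is exactly the three-term recurrence \eqref{recu} written row by row, and the bidiagonal shape of $M_{\Phi}(\lambda)$, with nonzero $-\alpha_{i}$'s on the first subdiagonal of its leading coefficient matrix, implies that $M_{\Phi}(\lambda)$ is row reduced and of full row rank at every $\lambda_{0}\in\overline{\F}$. Decomposing a single row of $\tilde{L}(\lambda)$ blockwise as $[r^{(1)}(\lambda),\ldots,r^{(k)}(\lambda)]$ with $r^{(i)}(\lambda)\in\F[\lambda]^{1\times m}$, the annihilation condition becomes $\sum_{i=1}^{k}\phi_{k-i}(\lambda)r^{(i)}(\lambda)=0$, which column by column forces the scalar $1\times k$ pencil $[(r^{(1)})_{j},\ldots,(r^{(k)})_{j}]$ to lie in the left null space of $\Phi_{k}(\lambda)$ for each $j\in\{1,\ldots,m\}$. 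By the predictable degree property of the minimal basis $M_{\Phi}(\lambda)$, each such pencil equals $h_{j}M_{\Phi}(\lambda)$ for a \emph{constant} $h_{j}\in\F^{1\times(k-1)}$, since any combination involving a positive-degree coefficient would produce a row of degree $\geq 2$.

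Finally, I would assemble the constants $\{h_{j}\}_{j=1}^{m}$ into a single $1\times(k-1)m$ row matching the Kronecker block structure of $M_{\Phi}(\lambda)\otimes I_{m}$, and then stack over the $km$ rows of $\tilde{L}(\lambda)$, producing a constant matrix $H\in\F^{km\times(k-1)m}$ with $\tilde{L}(\lambda)=H(M_{\Phi}(\lambda)\otimes I_{m})$, as required. The main obstacle I anticipate is precisely this last step: the predictable-degree argument combined with the Kronecker bookkeeping needed to lift the columnwise scalar null-space statement to the block pencil statement, since it is there that the interaction between the three-term recurrence and the tensor factor $\otimes I_{m}$ becomes essential.
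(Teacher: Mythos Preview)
The paper does not supply its own proof of this statement; the theorem is quoted verbatim from \cite[Theorem 1]{ortho} and used as a black box, so there is nothing in the paper to compare your argument against.

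That said, your proposal is correct. The ``if'' direction is the routine verification you give. For the ``only if'' direction, your reduction via $\tilde{L}(\lambda)=L(\lambda)-(v\otimes I_m)\,m_\Phi^P(\lambda)$ is the natural one, and the decisive step---that every pencil row annihilating $\Phi_k(\lambda)$ is a \emph{constant} $\F$-combination of the rows of $M_\Phi(\lambda)$---follows exactly from the predictable-degree property of the minimal basis $M_\Phi(\lambda)$, all of whose row degrees equal $1$ (the paper verifies this minimal-basis property independently in Lemma~\ref{strongblock}). The Kronecker bookkeeping you flag as the main obstacle is in fact routine: for a fixed row of $\tilde{L}(\lambda)$, define $h^{(i)}\in\F^{1\times m}$ by taking its $j$-th entry to be the $i$-th entry of your scalar coefficient row $h_j\in\F^{1\times(k-1)}$; then $[\,h^{(1)}\ \cdots\ h^{(k-1)}\,](M_\Phi(\lambda)\otimes I_m)$ reproduces that row of $\tilde{L}(\lambda)$, and stacking over all $km$ rows yields the desired $H$.
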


\begin{rem} \rm
	For the monomial basis $\{\phi_{j}(\lambda)=\lambda^{j}\}_{j=0}^{\infty}$ the space $\M_{1}(P)$ is denoted $\mathbb{L}_{1}(P)$ (see \cite{MMMM}). In this case $\alpha_{j}=1$ and $\beta_{j}=\gamma_{j}=0$ for all $j\geq 0$ in \eqref{recu} and the matrix $F_{\Phi}^{P}(\lambda)$ is the first companion form of $P(\lambda).$ 
\end{rem}

It is known that $F_{\Phi}^{P}(\lambda)$ is a strong linearization of $P(\lambda)$ (see \cite[Theorem 2]{polybases} for regular polynomial matrices $P(\la)$, and \cite[Section 7]{singular} for singular), but we can obtain this property as an immediate corollary of the next result.

\begin{lem}\label{strongblock} $F_{\Phi}^{P}(\lambda)$ is a strong block minimal bases pencil with only one block column associated to $P(\lambda)$ with sharp degree. Moreover, $\Phi_{k}(\lambda)^{T}\otimes I_{m}$ is a minimal basis dual to the minimal basis $M_{\Phi}(\lambda)\otimes I_{m}.$
\end{lem}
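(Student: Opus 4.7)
The plan is to match $F_{\Phi}^{P}(\lambda)$ to the template in Definition \ref{def:minlinearizations} with only one block column, i.e.\ $\widehat{p}=0$ (so that $K_{2}(\lambda)^{T}$ is absent), taking $M(\lambda):=m_{\Phi}^{P}(\lambda)$ and $K_{1}(\lambda):=M_{\Phi}(\lambda)\otimes I_{m}$, with candidate dual minimal bases $N_{1}(\lambda):=\Phi_{k}(\lambda)^{T}\otimes I_{m}$ and $N_{2}(\lambda):=I_{m}$. Then I would verify in order: (i) $K_{1}(\lambda)$ is a minimal basis with all row degrees $1$; (ii) $N_{1}(\lambda)$ is a minimal basis dual to $K_{1}(\lambda)$ with all row degrees equal; (iii) the factorization $P(\lambda)=N_{2}(\lambda)M(\lambda)N_{1}(\lambda)^{T}$ holds; (iv) the sharp degree identity $\deg(P)=\deg(N_{2})+\deg(N_{1})+1$ is satisfied.

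For (i), I would read off from the displayed form of $M_{\Phi}(\lambda)$ that each row carries exactly one degree-$1$ entry $(\lambda-\beta_{j})$ and otherwise constants, so all row degrees equal $1$, a property preserved under tensoring with $I_{m}$. Minimality of $M_{\Phi}(\lambda)\otimes I_{m}$ follows from the characterization recalled in the remark after Definition \ref{def:minlinearizations}: $M_{\Phi}(\lambda_{0})$ has full row rank at every $\lambda_{0}\in\overline{\F}$ because its first subdiagonal consists of the nonzero constants $-\alpha_{k-2},-\alpha_{k-3},\ldots,-\alpha_{0}$, and $M_{\Phi}(\lambda)$ is row reduced because its highest row degree coefficient matrix is the matrix with a $1$ in position $(i,i+1)$ and zeros elsewhere, which has full row rank. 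For (ii), the crucial identity is $M_{\Phi}(\lambda)\Phi_{k}(\lambda)=0$, which I would verify row by row: the $i$-th row of $M_{\Phi}(\lambda)$ times $\Phi_{k}(\lambda)$ is exactly the three-term recurrence \eqref{recu} at index $j=k-1-i$. Taking Kronecker products then gives $(M_{\Phi}(\lambda)\otimes I_{m})(\Phi_{k}(\lambda)\otimes I_{m})=0$, and the dimensions add up since $(k-1)m+m=km$. Minimality of $\Phi_{k}(\lambda)^{T}\otimes I_{m}$ follows again from the same characterization: full row rank at every $\lambda_{0}$ is clear because $\phi_{0}\equiv 1$, and row reducedness holds because $\phi_{k-1}$ has degree exactly $k-1$ and thus nonzero leading coefficient. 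All $m$ rows have the common degree $k-1$, as needed.

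For (iii) and (iv), I would invoke the identity $m_{\Phi}^{P}(\lambda)(\Phi_{k}(\lambda)\otimes I_{m})=P(\lambda)$ already established in the paragraph introducing $F_{\Phi}^{P}(\lambda)$, which gives $P(\lambda)=I_{m}\cdot m_{\Phi}^{P}(\lambda)\cdot(\Phi_{k}(\lambda)\otimes I_{m})=N_{2}(\lambda)M(\lambda)N_{1}(\lambda)^{T}$. Then $\deg(N_{2})+\deg(N_{1})+1=0+(k-1)+1=k=\deg(P)$, so the linearization has sharp degree. I do not anticipate a real obstacle: the only computation that is not purely bookkeeping is the rank argument for $M_{\Phi}(\lambda_{0})$, which is elementary once one notices the always-nonzero subdiagonal. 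The slight care is in correctly interpreting the ``only one block column'' phrasing as $\widehat{p}=0$ and in ensuring the empty-block dual $N_{2}=I_{m}$ has $\deg(N_{2})=0$, so that the sharp degree formula comes out right.
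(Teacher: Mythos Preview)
Your proposal is correct and follows essentially the same route as the paper: identify $M(\lambda)=m_{\Phi}^{P}(\lambda)$, $K_{1}(\lambda)=M_{\Phi}(\lambda)\otimes I_{m}$, verify minimality of $M_{\Phi}(\lambda)$ via the nonzero subdiagonal (full row rank at every point) and the shift-matrix highest row degree coefficient (row reduced), verify minimality of the dual $\Phi_{k}(\lambda)^{T}$ via $\phi_{0}\equiv 1$ and the nonzero leading coefficient of $\phi_{k-1}$, and then read off the factorization and the sharp degree identity. The only cosmetic difference is that you make the empty $K_{2}$ case explicit by writing $N_{2}(\lambda)=I_{m}$, whereas the paper simply works with one dual pair and checks $\deg(P)=1+\deg(N_{1})$.
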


\begin{proof} Let us denote $M(\lambda)=m_{\Phi}^{P}(\lambda)$ and $K(\lambda)=M_{\Phi}(\lambda)\otimes I_{m}.$ We consider
	$$F_{\Phi}^{P}(\lambda)=\left[ {\begin{array}{cc}
		M(\lambda) \\
		K(\lambda) \\
		\end{array} } \right].$$
Note that $M_{\Phi}(\lambda_{0})$ has full row rank for all $\lambda_{0}\in \overline{\efe}$ because $\alpha_{i}\neq 0$ for all $i\geq 0.$ Also note that $M_{\Phi}(\lambda)$ is row reduced because its highest row degree coefficient matrix $$[M_{\Phi}]_{hr}=\left[ {\begin{array}{cccccc}
		0 & 1 & 0 &  \\
		& 0 & 1 & 0 &  \\
		& &\ddots &\ddots & \ddots &  \\
		& & &0& 1 & 0 \\
		& & & & 0 & 1
		\end{array} } \right] $$
	has full row rank. We conclude that $M_{\Phi}(\lambda)$ is a minimal basis, and therefore, $K(\lambda)=M_{\Phi}(\lambda)\otimes I_{m}$ is also a minimal basis \cite[Corollary 2.4]{BKL}.
	Let us denote $N(\lambda)= \Phi_{k}(\lambda)^{T}\otimes I_{m}.$ Note that $\Phi_{k}(\lambda)^{T}$ is a minimal basis because $\phi_{0}(\lambda)=1,$ so $\Phi_{k}(\lambda_{0})$ has rank $1$ for all $\lambda_{0}\in\overline{\efe},$ and
	$$[\Phi_{k}^{T}]_{hr}=\left[ {\begin{array}{cccc}
		\frac{1}{\alpha_{0}\alpha_{1}\cdots \alpha_{k-2} }  &
		0   &
		\cdots  &
		0
		\end{array} } \right] $$
	has also rank $1.$ Therefore, $N(\lambda)=\Phi_{k}(\lambda)^{T}\otimes I_{m}$ is also a minimal basis.
	Since $K(\lambda)N(\lambda)^{T}=(M_{\Phi}(\lambda)\otimes I_{m})(\Phi_{k}(\lambda)\otimes I_{m})=0$ and $\left[ {\begin{array}{cc}
		K(\lambda) \\
		N(\lambda) \\
		\end{array} } \right]$ is a square matrix, we have that $K(\lambda)$ and $N(\lambda)$ are dual minimal bases. In addition, it is obvious that all the row degrees of $K(\lambda)$ are equal to $1$ and all the row degrees of $\Phi_{k}(\lambda)^{T}\otimes I_{m}$ are equal to $k-1.$ Hence, $F_{\Phi}^{P}(\lambda)$ is a strong block minimal bases pencil associated to the polynomial matrix $M(\lambda)N(\lambda)^{T}=m_{\Phi}^{P}(\lambda) (\Phi_{k}(\lambda)\otimes I_{m})=P(\lambda)$
and $\text{deg}(P(\lambda))=1+\text{deg}(N(\lambda)),$ which means that $F_{\Phi}^{P}(\lambda)$ has sharp degree.
\end{proof}
Since every strong block minimal bases pencil is a strong linearization (see \cite[Theorem 3.3]{BKL}), the following corollary is straightforward.

\begin{con}
	$F_{\Phi}^{P}(\lambda)$ is a strong linearization for $P(\lambda).$
\end{con}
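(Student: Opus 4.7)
The plan is to chain together the two ingredients already assembled in the excerpt. Lemma \ref{strongblock} establishes that $F_{\Phi}^{P}(\lambda)$ is a strong block minimal bases pencil associated to $P(\lambda)$ (with sharp degree), by exhibiting explicit dual minimal bases $M_{\Phi}(\lambda)\otimes I_m$ and $\Phi_k(\lambda)^T\otimes I_m$ and verifying the factorization $P(\lambda)=m_{\Phi}^{P}(\lambda)(\Phi_k(\lambda)\otimes I_m)$. Therefore, nothing new needs to be computed: the content of the corollary is extracted by invoking the general fact from \cite[Theorem 3.3]{BKL} that every strong block minimal bases pencil associated to a polynomial matrix $P(\lambda)$ is a strong linearization of $P(\lambda)$ in the sense of \cite[Definition 2.5]{MMMM}.

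Concretely, I would simply write: by Lemma \ref{strongblock}, $F_{\Phi}^{P}(\lambda)$ is a strong block minimal bases pencil associated to $P(\lambda)$, and then apply \cite[Theorem 3.3]{BKL} to deduce that $F_{\Phi}^{P}(\lambda)$ is a strong linearization of $P(\lambda)$. No separate verification of the preservation of finite or infinite elementary divisors is required, since these are built into the statement of the cited theorem.

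There is essentially no obstacle here; the only conceivable subtlety would be checking that the hypotheses of \cite[Theorem 3.3]{BKL} are met, but these are exactly the conditions established in Lemma \ref{strongblock} (both $M_{\Phi}(\lambda)\otimes I_m$ and $\Phi_k(\lambda)^T\otimes I_m$ are minimal bases, they are dual to each other, $K_1=M_{\Phi}(\lambda)\otimes I_m$ has all row degrees equal to $1$, and the row degrees of the dual basis are all equal to $k-1$). Hence the corollary is obtained as a one-line consequence of Lemma \ref{strongblock} and \cite[Theorem 3.3]{BKL}.
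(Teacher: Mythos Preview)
Your proposal is correct and follows precisely the same approach as the paper: invoke Lemma \ref{strongblock} to identify $F_{\Phi}^{P}(\lambda)$ as a strong block minimal bases pencil associated to $P(\lambda)$, and then apply \cite[Theorem 3.3]{BKL} to conclude it is a strong linearization. The paper in fact presents this corollary as immediate from that combination, with no additional argument.
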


The proof of the next result is trivial because if $ L(\lambda) = [v\otimes I_{m} \quad H]F_{\Phi}^{P}(\lambda) $ with $[v\otimes I_{m} \quad H] $ nonsingular then $L(\lambda)$ is strictly equivalent to $F_{\Phi}^{P}(\lambda).$

\begin{con}\textbf{\cite[Corollary 2.1]{ortho}}\label{ele}
	Let $L(\lambda)=[v\otimes I_{m}\quad H]F_{\Phi}^{P}(\lambda)\in\mathbb{M}_{1}(P).$ If $[v\otimes I_{m} \quad H]$ is nonsingular then $L(\lambda)$ is a strong linearization for $P(\lambda).$
\end{con}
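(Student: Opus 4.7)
The plan is essentially the one sketched by the authors in the sentence preceding the statement: exhibit $L(\lambda)$ as strictly equivalent to $F_\Phi^P(\lambda)$ and invoke the fact that strict equivalence preserves the property of being a strong linearization. Concretely, since $P(\lambda)$ is a polynomial matrix we have $\nu(P(\lambda))=0$, so when we view $P(\lambda)$ as a rational matrix and apply Definition \ref{def_stronglin} the block $A_1\la+A_0$ is absent and the whole $km\times km$ pencil $F_\Phi^P(\lambda)$ plays the role of the $D_1\la+D_0$ block, with $s=(k-1)m$.

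With this identification the result is an immediate instance of Lemma \ref{more}. One sets $n=0$, takes $Q_2=[v\otimes I_m\quad H]\in\F^{km\times km}$ (invertible by hypothesis) and $Q_4=I_{km}$, and lets $Q_1,Q_3,W,Z$ be the trivial (empty) matrices arising from $n=0$. The previous corollary states that $F_\Phi^P(\lambda)$ is a strong linearization of $P(\lambda)$, and Lemma \ref{more} then delivers that
\[
L(\lambda)=Q_2\,F_\Phi^P(\lambda)\,Q_4=[v\otimes I_m\quad H]\,F_\Phi^P(\lambda)
\]
is a strong linearization of $P(\lambda)$ as well.

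There is no real obstacle here; the only thing to check is that the degenerate case $n=0$ in Lemma \ref{more} is legitimate, which it is, since all the conditions in Definition \ref{def_stronglin} become vacuous for the (non-existent) $A$-block and the remaining conditions (i) and (ii) are preserved under left multiplication by a constant invertible matrix (the unimodular matrix $U_1(\la)$ and biproper matrix $B_1(\la)$ for $F_\Phi^P(\lambda)$ are simply replaced by $Q_2U_1(\la)$ and $Q_2B_1(\la)$, which are again unimodular and biproper respectively). This is exactly the computation carried out in the proof of Lemma \ref{more}, so no further argument is needed.
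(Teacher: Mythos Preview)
Your proof is correct and follows essentially the same approach as the paper. The paper's one-line argument is that $L(\lambda)$ is strictly equivalent to the strong linearization $F_\Phi^P(\lambda)$; you make this precise by invoking Lemma~\ref{more} in the degenerate case $n=0$, which is exactly the mechanism behind the strict-equivalence observation.
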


\begin{rem}\rm Although $F_{\Phi}^{P}(\lambda)$ is a strong block minimal bases pencil associated to $P(\lambda)$ this structure is not preserved in general when we multiply on the left by a nonsingular matrix $[v\otimes I_{m} \quad H].$ For example, consider the polynomial matrix $P(\lambda)=I\lambda^{3}+2I\lambda^{2}  + I\lambda+ S \in\R[\la]^{2\times 2}$ expressed in the monomial basis, where $S=\left[\begin{array}{cc}
	1 & 0 \\
	0 & 0
	\end{array}\right]$ and $I$ stands for $I_2.$ In this case, the matrix $F_{\Phi}^{P}(\lambda)$ is $F_{\Phi}^{P}(\lambda)=\left[\begin{array}{ccc}
\lambda I + 2I & I & S \\
-I & \lambda I & 0 \\
0 & -I & \lambda I
\end{array} \right].$ Let $v=\left[1\quad 1\quad 0\right]^{T}$ and $ H =\left[\begin{array}{cc}
0 & 0 \\
I & 0 \\
0 & I
\end{array} \right]$ and let $L(\lambda)=[v\otimes I \quad H]F_{\Phi}^{P}(\lambda)=\left[\begin{array}{ccc}
\lambda I + 2I & I & S \\
\la I + I & \lambda I + I & S \\
0 & -I & \lambda I
\end{array} \right].$ Notice that if $L(\lambda)$ were a strong block minimal bases pencil associated to $P(\lambda),$ one of these two different situations would happen in \eqref{eq:minbaspencil}:
\begin{enumerate}
	\item $M(\lambda)=\left[\lambda I + 2I \quad I \quad S \right],$ $K_{1}(\lambda)=\left[\begin{array}{ccc}
\la I + I & \lambda I + I & S \\
0 & -I & \lambda I
	\end{array}\right]$ and $K_{2}(\lambda)$ empty. 
	\item $M(\lambda)=\left[\begin{array}{ccc}
	\lambda I + 2I  \\
	\la I + I  \\
	0 
	\end{array} \right],$ $K_{2}(\lambda)^{T}=\left[\begin{array}{ccc}
	 I & S \\
	 \lambda I + I & S \\
	-I & \lambda I
	\end{array} \right]$ and $K_{1}(\lambda)$ empty.  
\end{enumerate}   
  In the first case, the matrix $K_{1}(\lambda)$ has not full row rank for $\lambda=-1.$ In the second case, the matrix $K_{2}(\la)$ has not full row rank for $\lambda=0.$ Therefore, $L(\la)$ is not a strong block minimal bases pencil associated to $P(\lambda).$ One still may wonder whether or not the pencil we obtain by permuting the first and third columns of $L(\la)$ would be a strong block minimal bases pencil of $P(\la),$ since this pencil has a zero block in the right-lower corner. Observe that this cannot happen because the polynomial associated to such pencil would have size $4\times 4.$
\end{rem}

From the fact that $F_{\Phi}^{P}(\lambda)$ is a strong block minimal bases pencil, we can obtain strong linearizations for rational matrices by applying Theorem 8.11 in \cite{strong}. For this purpose, we prove first the following lemma. 

\begin{lem}\label{unimod}
The matrix $$U(\lambda)=\left[ {\begin{array}{cc}
	M_{\Phi}(\lambda)\otimes I_{m} \\
	e_{k}^{T}\otimes I_{m}
	\end{array} } \right]=\left[ {\begin{array}{cc}
	M_{\Phi}(\lambda) \\
	e_{k}^{T}
	\end{array} } \right]\otimes I_{m}$$
is unimodular, and its inverse has the form $U(\lambda)^{-1}=[\widehat{\Phi}_{k}(\lambda)\quad \Phi_{k}(\lambda)\otimes I_{m}]$
with $\widehat{\Phi}_{k}(\lambda)\in\F[\lambda]^{km\times (k-1)m}.$
\end{lem}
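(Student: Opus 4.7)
The plan is to establish two facts in turn: (a) $U(\lambda)$ is unimodular, and (b) the last $m$ columns of $U(\lambda)^{-1}$ are exactly $\Phi_{k}(\lambda)\otimes I_{m}$, which automatically forces the first $(k-1)m$ columns to be some polynomial block $\widehat{\Phi}_{k}(\lambda)$.

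For (a), I would work with the $k\times k$ scalar matrix
\[
\widetilde{M}_{\Phi}(\lambda)=\left[\begin{array}{c} M_{\Phi}(\lambda) \\ e_{k}^{T}\end{array}\right]
\]
and exploit the Kronecker identity $\det(A\otimes I_{m})=(\det A)^{m}$, so that it suffices to show $\det \widetilde{M}_{\Phi}(\lambda)\in\F\setminus\{0\}$. Expanding the determinant along the last row (which has a single $1$ in position $(k,k)$) reduces the computation to the determinant of the $(k-1)\times(k-1)$ principal submatrix obtained by deleting the last row and the last column of $\widetilde{M}_{\Phi}(\lambda)$. That submatrix is lower triangular with diagonal entries $-\alpha_{k-2},-\alpha_{k-3},\ldots,-\alpha_{0}$, so its determinant equals $(-1)^{k-1}\prod_{j=0}^{k-2}\alpha_{j}$, which is a nonzero constant because every $\alpha_{j}\neq 0$ by hypothesis. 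Hence $\widetilde{M}_{\Phi}(\lambda)$ is unimodular and so is $U(\lambda)=\widetilde{M}_{\Phi}(\lambda)\otimes I_{m}$.

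For (b), I would use the recurrence \eqref{recu} to verify directly that $M_{\Phi}(\lambda)\,\Phi_{k}(\lambda)=0$: the $i$-th row of $M_{\Phi}(\lambda)$ applied to $\Phi_{k}(\lambda)$ produces, with $j=k-1-i$, the expression $-\alpha_{j}\phi_{j+1}(\lambda)+(\lambda-\beta_{j})\phi_{j}(\lambda)-\gamma_{j}\phi_{j-1}(\lambda)$, which vanishes by \eqref{recu} (using $\phi_{-1}(\lambda)=0$ in the last row). Since $e_{k}^{T}\Phi_{k}(\lambda)=\phi_{0}(\lambda)=1$, tensoring with $I_{m}$ yields
\[
U(\lambda)\,\bigl(\Phi_{k}(\lambda)\otimes I_{m}\bigr)=\left[\begin{array}{c} 0 \\ I_{m}\end{array}\right].
\]
Because $U(\lambda)$ is invertible over $\F(\lambda)$, this identity pins down the last $m$ columns of $U(\lambda)^{-1}$ to be $\Phi_{k}(\lambda)\otimes I_{m}$. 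Finally, unimodularity of $U(\lambda)$ guarantees that $U(\lambda)^{-1}$ is polynomial, so its first $(k-1)m$ columns give a polynomial matrix $\widehat{\Phi}_{k}(\lambda)\in\F[\lambda]^{km\times(k-1)m}$, completing the claimed form.

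No step looks genuinely difficult; the only mild subtlety is keeping the indexing of the three-term recurrence aligned with the row ordering of $M_{\Phi}(\lambda)$ (in particular checking the boundary row where $\phi_{-1}=0$ absorbs the missing $\gamma_{0}$-entry). Once that bookkeeping is in place, both (a) and (b) follow by a direct computation and the proof is complete.
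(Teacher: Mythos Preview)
Your proposal is correct and follows essentially the same route as the paper: reduce to the scalar $k\times k$ matrix $\widetilde{M}_{\Phi}(\lambda)$, observe its determinant is the nonzero constant $(-1)^{k-1}\prod_{j=0}^{k-2}\alpha_j$, and then use $\widetilde{M}_{\Phi}(\lambda)\Phi_k(\lambda)=e_k$ to read off the last block column of the inverse. One small slip: the $(k-1)\times(k-1)$ submatrix you obtain after expanding along the last row is \emph{upper} triangular, not lower triangular (the paper simply notes that the full $\widetilde{M}_{\Phi}(\lambda)$ is upper triangular and reads the determinant off directly), but your stated diagonal entries and determinant are correct regardless.
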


\begin{proof} Let us consider the matrix $$\tilde{U}(\lambda)=\left[ {\begin{array}{cc}
		M_{\Phi}(\lambda) \\
		e_{k}^{T}
		\end{array} } \right]=\left[ {\begin{array}{cccccc}
		-\alpha_{k-2} & (\lambda -\beta_{k-2}) & -\gamma_{k-2} &  \\
		& -\alpha_{k-3} & (\lambda - \beta_{k-3}) & -\gamma_{k-3} &  \\
		& &\ddots &\ddots & \ddots &  \\
		& & &-\alpha_{1}& (\lambda-\beta_{1}) & -\gamma_{1} \\
		& & & &-\alpha_{0}& (\lambda-\beta_{0})\\
		0 & & \cdots & & 0 & 1
		\end{array} } \right] .$$
Since $\tilde{U}(\lambda)$ is upper triangular, its determinant is $(-\alpha_{k-2})\cdots (-\alpha_{0}),$ i.e., a constant different from zero. Therefore, $\tilde{U}(\lambda)$ is unimodular. Finally, note that $\tilde{U}(\lambda)\Phi_{k}(\lambda)=e_{k}\in\F^{k}.$ Thus
$\Phi_{k}(\lambda)$ is the last column of $\tilde{U}(\lambda)^{-1}.$ 
\end{proof}

\begin{theo}\label{prim} Let $G(\lambda)\in \F(\lambda)^{m\times m}$ be a rational matrix, let $G(\lambda)=D(\lambda)+G_{sp}(\lambda)$ be its unique decomposition into its polynomial part $D(\lambda)\in\F[\lambda]^{m\times m}$ and its stricly proper part $G_{sp}(\lambda)\in\F(\lambda)^{m\times m},$ and let $G_{sp}(\lambda)=C(\lambda I_{n}-A)^{-1}B$ be a minimal order state-space realization of $G_{sp}(\lambda).$ Assume that $deg(D(\lambda))\geq 2.$ Write $D(\lambda)$ in terms of the polynomial basis $\{\phi_{j}(\lambda)\}_{j=0}^{\infty}$ satisfying the three-term recurrence relation \eqref{recu}, as
	\begin{equation}\label{polypart_polybasis}
D(\lambda)=D_{k}\phi_{k}(\lambda)+D_{k-1}\phi_{k-1}(\lambda)+\cdots + D_{1}\phi_{1}(\lambda)+D_{0}\phi_{0}(\lambda)
	\end{equation}
	with $D_{k}\neq 0,$ and let $F_{\Phi}^{D}(\lambda)$ be the matrix pencil defined as in \eqref{efe}. Then, for any nonsingular matrices $X,Y\in\F^{n\times n}$ the linear polynomial matrix
	$$\mathcal{L}(\lambda)= \left[
		\begin{array}{c|c}
		
		X(\lambda I_{n}-A)Y& 0_{n\times (k-1)m}\quad XB\\
		\hline \phantom{\Big|}
		
		\begin{array}{c}
		-CY\\
		0_{(k-1)m\times n}
		\end{array}& F_{\Phi}^{D}(\lambda)
		\end{array}
		\right]$$
	is a strong linearization of $G(\lambda).$
	
\end{theo}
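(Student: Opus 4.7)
The plan is to proceed in two stages: first, obtain a canonical strong linearization of $G(\lambda)$ by applying \cite[Theorem 8.11]{strong} to the pair $(F_{\Phi}^{D}(\lambda), G_{sp}(\lambda))$ (corresponding to the special case $X = Y = I_n$), and then use Lemma \ref{more} to incorporate the nonsingular factors $X$ and $Y$.

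For the first stage, I would invoke Lemma \ref{strongblock}: $F_{\Phi}^{D}(\lambda)$ is a strong block minimal bases pencil associated to $D(\lambda)$ with sharp degree, with $M(\lambda) = m_{\Phi}^{D}(\lambda)$, $K_1(\lambda) = M_{\Phi}(\lambda) \otimes I_m$, $K_2$ empty, and dual minimal bases $N_1(\lambda) = \Phi_{k}(\lambda)^T \otimes I_m$, $N_2(\lambda) = I_m$. The auxiliary Lemma \ref{unimod} produces an explicit unimodular completion $U(\lambda)$ of $K_1(\lambda)$ whose inverse has $N_1(\lambda)^T = \Phi_k(\lambda) \otimes I_m$ as its last $m$ columns; this is precisely what identifies the row and column positions in which the state-space data $B$ and $C$ of the minimal realization must be embedded when applying \cite[Theorem 8.11]{strong}. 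Because $N_2(\lambda) = I_m$, the block $-C$ is placed in the first $m$ rows below $\lambda I_n - A$; because $\phi_0(\lambda) = 1$ occupies the last block entry of $\Phi_k$, the block $B$ is placed in the last $m$ columns to the right of $\lambda I_n - A$. Hence \cite[Theorem 8.11]{strong} yields the canonical strong linearization
$$\mathcal{L}_0(\lambda) = \left[\begin{array}{c|c} \lambda I_n - A & 0_{n\times (k-1)m}\quad B \\ \hline \phantom{\Big|} \begin{array}{c} -C \\ 0_{(k-1)m\times n} \end{array} & F_{\Phi}^{D}(\lambda) \end{array}\right]$$
of $G(\lambda)$. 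The hypothesis $\deg D(\lambda) \geq 2$ ensures $F_{\Phi}^{D}(\lambda)$ is a genuine strong block minimal bases pencil and that \cite[Theorem 8.11]{strong} is applicable.

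For the second stage, direct block multiplication shows
$$\mathcal{L}(\lambda) = \diag(X, I_{km}) \, \mathcal{L}_0(\lambda) \, \diag(Y, I_{km}),$$
since the top-left block becomes $X(\lambda I_n - A)Y$, the top-right block becomes $X[\,0\ \ B\,] = [\,0\ \ XB\,]$, and the bottom-left block becomes $\begin{bmatrix} -C \\ 0 \end{bmatrix} Y = \begin{bmatrix} -CY \\ 0 \end{bmatrix}$. Applying Lemma \ref{more} with $Q_1 = X$, $Q_3 = Y$, $Q_2 = I_{km}$, $Q_4 = I_{km}$, and $W = 0$, $Z = 0$ then immediately gives that $\mathcal{L}(\lambda)$ is itself a strong linearization of $G(\lambda)$.

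The main potential obstacle is verifying that the placement of $B$ and $C$ prescribed by \cite[Theorem 8.11]{strong} coincides with the form written above for $\mathcal{L}_0(\lambda)$; this is a matter of specializing the generic construction of \cite[Theorem 8.11]{strong} to the case in which $N_2$ is trivial and $N_1 = \Phi_k^T \otimes I_m$, and the explicit form of $U(\lambda)^{-1}$ furnished by Lemma \ref{unimod} is exactly what makes this specialization transparent.
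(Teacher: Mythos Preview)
Your proposal is correct and follows essentially the same approach as the paper: both rely on Lemma \ref{strongblock} and Lemma \ref{unimod} to set up an application of \cite[Theorem 8.11]{strong} with $K_1(\lambda)=M_\Phi(\lambda)\otimes I_m$, $\widehat{K}_1=e_k^T\otimes I_m$, $K_2(\lambda)^T$ empty and $\widehat{K}_2^T=I_m$. The only difference is that the paper applies \cite[Theorem 8.11]{strong} in one shot (that theorem already accommodates the nonsingular factors $X$ and $Y$), whereas you first take $X=Y=I_n$ and then invoke Lemma \ref{more}; this extra step is harmless but unnecessary.
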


\begin{proof}
Lemmas \ref{strongblock} and \ref{unimod} allow us to apply \cite[Theorem 8.11]{strong}, with $K_{1}(\lambda)= M_{\Phi}(\lambda)\otimes I_{m} ,$ $\widehat{K}_{1}=e_{k}^{T}\otimes I_{m},$ $K_{2}(\lambda)^{T}$ empty and $\widehat{K}_{2}^{T}=I_{m}.$
\end{proof}

Then, from combining Lemma \ref{more} and Corollary \ref{ele} we obtain strong linearizations of a rational matrix from strong linearizations in $\M_{1}(D)$ of its polynomial part.

\begin{theo}\label{muno}  Under the same assumptions as in Theorem \ref{prim}, let $v\in\F^{k},$ $H\in\F^{km\times (k-1)m}$ with $[v\otimes I_{m}\quad H]$ nonsingular and let $L(\lambda)=[v\otimes I_{m}\quad H]F_{\Phi}^{D}(\lambda)\in \M_{1}(D).$ Then, the linear polynomial matrix
	$$\mathcal{L}(\lambda)= \left[
	\begin{array}{c|c}
	
	X(\lambda I_{n}-A)Y& 0_{n\times (k-1)m}\quad XB\\
	\hline \phantom{\Big|}
	
	-(v\otimes I_{m})CY& L(\lambda)
	\end{array}
	\right]$$
	is a strong linearization of $G(\lambda).$
	
\end{theo}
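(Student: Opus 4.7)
The plan is to reduce this theorem to Theorem \ref{prim} (with $L(\lambda)$ in place of $F_{\Phi}^{D}(\lambda)$ in the $(2,2)$-block) via a single application of Lemma \ref{more}. The key observation is that $\mathcal{L}(\lambda)$ differs from the strong linearization produced by Theorem \ref{prim} only in the bottom block row, and the difference can be implemented by left-multiplication by a block-diagonal nonsingular constant matrix.

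First I would denote by $\mathcal{L}_{0}(\lambda)$ the strong linearization of $G(\lambda)$ guaranteed by Theorem \ref{prim}, i.e.\ the one whose $(2,2)$-block is $F_{\Phi}^{D}(\lambda).$ Then I would set
\[
P \;=\; \begin{bmatrix} I_{n} & 0 \\ 0 & [\,v\otimes I_{m}\ \ H\,] \end{bmatrix},
\]
which is nonsingular because $[v\otimes I_{m}\ \ H]$ is nonsingular by hypothesis. The main computation is to check that $\mathcal{L}(\lambda) = P\,\mathcal{L}_{0}(\lambda).$ The $(1,1)$ and $(1,2)$ blocks are untouched. For the $(2,1)$-block one uses that the bottom-left block of $\mathcal{L}_{0}(\lambda)$ is $\bigl[\,-CY;\, 0_{(k-1)m\times n}\,\bigr]^{T}$ (stacked with $-CY$ on top and zero beneath), so
\[
[\,v\otimes I_{m}\ \ H\,]\begin{bmatrix} -CY \\ 0_{(k-1)m\times n} \end{bmatrix}
\;=\; -(v\otimes I_{m})\,CY,
\]
which is exactly the $(2,1)$-block of $\mathcal{L}(\lambda).$ The $(2,2)$-block equals $[\,v\otimes I_{m}\ \ H\,]F_{\Phi}^{D}(\lambda) = L(\lambda)$ by definition.

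Finally I would apply Lemma \ref{more} to $\mathcal{L}_{0}(\lambda)$ with the choices $Q_{1}=I_{n},$ $W=0,$ $Q_{2}=[\,v\otimes I_{m}\ \ H\,],$ $Q_{3}=I_{n},$ $Z=0,$ $Q_{4}=I_{km}.$ All hypotheses of Lemma \ref{more} are satisfied (in particular, $Q_{2}$ is nonsingular), and its conclusion is precisely that $\mathcal{L}(\lambda) = P\,\mathcal{L}_{0}(\lambda)$ is a strong linearization of $G(\lambda),$ completing the proof.

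There is no real obstacle here; the argument is essentially a bookkeeping check of one block-matrix product, with the genuine content already packaged in Theorem \ref{prim} and Lemma \ref{more}. The only thing to be careful about is that the zero block in the lower-left of $\mathcal{L}_{0}(\lambda)$ is positioned in precisely the rows that the columns of $H$ multiply, which is what makes $H$ disappear from the product and leaves only the $(v\otimes I_{m})$-piece acting on $-CY.$
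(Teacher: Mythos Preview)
Your proof is correct and follows essentially the same approach as the paper: both multiply the strong linearization of Theorem \ref{prim} on the left by $\diag(I_{n},[v\otimes I_{m}\ \ H])$ and invoke Lemma \ref{more}. The only difference is cosmetic---the paper writes the block product in one displayed equation, while you spell out each block separately.
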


\begin{proof}
	 Set $K=[v\otimes I_{m}\quad H].$ If $K$ is nonsingular then, by Lemma \ref{more} and Theorem \ref{prim},
	
\begin{eqnarray*}
	\mathcal{L}(\lambda) &= &\left[\begin{array}{cc}
	I_{n} & 0 \\
	0 & K
	\end{array}\right]\left[
	\begin{array}{c|c}
	
	X(\lambda I_{n}-A)Y& 0_{n\times (k-1)m}\quad XB\\
	\hline \phantom{\Big|}
	
	\begin{array}{c}
	-CY\\
	0_{(k-1)m\times n}
	\end{array}& F_{\Phi}^{D}(\lambda)
	\end{array}
	\right]\\
	&=&\left[
	\begin{array}{c|c}
	
	X(\lambda I_{n}-A)Y& 0_{n\times (k-1)m}\quad XB\\
	\hline \phantom{\Big|}
	
	-(v\otimes I_{m})CY& L(\lambda)
	\end{array}
	\right].
\end{eqnarray*} 
is a strong linearization of $G(\lambda).$ 
\end{proof}  
The strong linearizations of square rational matrices constructed in Theorem \ref{muno} will be called \textit{$\M_{1}$-strong linearizations.}

\section{$\M_{2}$-strong linearizations}\label{sect:m2}

In this section we obtain strong linearizations of a square rational matrix from the transposed version of $\mathbb{M}_{1}(P),$ where $P(\lambda)$ is the polynomial matrix in \eqref{polynomial}. Since the proofs of the results are similar to those in Section \ref{sect:m1}, they are omitted for brevity. We define $W_{P}=\{w^{T}\otimes P(\lambda):w\in\F^{k}\},$ and we consider the set of pencils
$$\mathbb{M}_{2}(P)=\{L(\lambda)=\lambda X+Y: X,Y\in\F^{km\times km},\text{ }(\Phi_{k}(\lambda)^{T}\otimes I_{m})L(\lambda)\in W_{P}\}.$$
A pencil $L(\lambda)\in\mathbb{M}_{2}(P),$ which verifies $(\Phi_{k}(\lambda)^{T}\otimes I_{m})L(\lambda)=w^{T}\otimes P(\lambda)$ for some vector $w\in\F^{k},$ is said to have \textit{left ansatz vector} $w.$ Pencils in $\mathbb{M}_2(P)$ are characterized in \cite[Theorem 2]{ortho}. We need the definition of the block-transpose of a $km\times lm$ pencil $L(\lambda).$ If we express $L(\lambda)$ as
$L(\lambda)=\displaystyle\sum_{i=1}^{k}\displaystyle\sum_{j=1}^{l}e_{i}e_{j}^{T}\otimes L_{ij}(\lambda)$ for certain $m\times m$ pencils $L_{ij}(\lambda),$ where $e_i$ denotes the $i$th canonical vector in $\F^{k},$ and $e_j$ the $j$th canonical vector in $\F^{l},$ we call $L(\lambda)^{\mathcal{B}}=\displaystyle\sum_{i=1}^{k}\displaystyle\sum_{j=1}^{l}e_{j}e_{i}^{T}\otimes L_{ij}(\lambda)$ the \textit{block-transpose} of $L(\lambda).$ Notice that $F_{\Phi}^{P}(\lambda)^{\mathcal{B}}=[
	{m_{\Phi}^{P}(\lambda)}^{\mathcal{B}} \quad
	{M_{\Phi}(\lambda)}^{T}\otimes I_{m} ].$

\begin{theo} \textbf{\cite[Theorem 2]{ortho}}
	Let $P(\lambda)\in\F[\lambda]^{m\times m }$ be a polynomial matrix with degree $k\geq 2.$ Then $L(\lambda)\in\mathbb{M}_{2}(P)$ with left ansatz vector $w\in \F^{k}$ if and only if $$L(\lambda)=F_{\Phi}^{P}(\lambda)^{\mathcal{B}}\left[\begin{array}{c}
     w^{T}\otimes I_{m} \\
     H^{\mathcal{B}}
	\end{array}\right]$$
	for some matrix $H\in\F^{km \times (k-1)m}$ partitioned into $k\times (k-1)$ blocks each of size $m\times m.$
\end{theo}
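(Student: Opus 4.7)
The plan is to reduce the characterization of $\mathbb{M}_{2}(P)$ to the already established characterization of $\mathbb{M}_{1}(P)$ via block-transposition. The driving observation is the equivalence: $L(\lambda)\in\mathbb{M}_{2}(P)$ with left ansatz vector $w$ if and only if $L(\lambda)^{\mathcal{B}}\in\mathbb{M}_{1}(P)$ with right ansatz vector $w$. To verify this, I would write $L(\lambda)=\sum_{i,j}e_{i}e_{j}^{T}\otimes L_{ij}(\lambda)$ with $m\times m$ blocks $L_{ij}(\lambda)$ and expand both defining identities; a short Kronecker manipulation shows that $(\Phi_{k}(\lambda)^{T}\otimes I_{m})L(\lambda)$ and $L(\lambda)^{\mathcal{B}}(\Phi_{k}(\lambda)\otimes I_{m})$ each equal a sum indexed by $j$ whose $j$-th component is precisely $\sum_{i}\phi_{k-i}(\lambda)L_{ij}(\lambda)$. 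Hence both membership conditions are encoded by the same scalar identities $\sum_{i}\phi_{k-i}(\lambda)L_{ij}(\lambda)=w_{j}P(\lambda)$ for $j=1,\dots,k$.

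With this equivalence in hand, I would apply the $\mathbb{M}_{1}$ characterization to $L(\lambda)^{\mathcal{B}}$ to obtain $L(\lambda)^{\mathcal{B}}=[w\otimes I_{m}\;\; H]\,F_{\Phi}^{P}(\lambda)$ for some $H\in\F^{km\times (k-1)m}$. Taking block-transpose of this identity and using that $(\cdot)^{\mathcal{B}}$ is an involution, together with the elementary identity $[w\otimes I_{m}\;\; H]^{\mathcal{B}}=\left[\begin{smallmatrix}w^{T}\otimes I_{m}\\ H^{\mathcal{B}}\end{smallmatrix}\right]$ (read off directly from the block-definition), yields the desired form. For the converse the same steps run in reverse: any $L(\lambda)$ of the stated form has $L(\lambda)^{\mathcal{B}}\in\mathbb{M}_{1}(P)$ by the $\mathbb{M}_{1}$ characterization, hence $L(\lambda)\in\mathbb{M}_{2}(P)$ by the initial equivalence.

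The main technical obstacle is the product rule for block-transpose: in general $(XY)^{\mathcal{B}}\neq Y^{\mathcal{B}}X^{\mathcal{B}}$, since the expansions give $\sum_{i,l}e_{l}e_{i}^{T}\otimes\sum_{j}X_{ij}Y_{jl}$ and $\sum_{i,l}e_{l}e_{i}^{T}\otimes\sum_{j}Y_{jl}X_{ij}$, which differ in the order of block multiplication. In our setting the identity \emph{does} hold because both factors have a very specific structure: in the first block column of $X=[w\otimes I_{m}\;\; H]$ every block is a scalar multiple of $I_{m}$, whereas in the last $k-1$ block rows of $Y=F_{\Phi}^{P}(\lambda)$ every block is a scalar multiple of $I_{m}$ (coming from $M_{\Phi}(\lambda)\otimes I_{m}$). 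Consequently, in each product $X_{ij}Y_{jl}$ appearing in the expansion at least one of the two factors is a scalar multiple of $I_{m}$ and therefore commutes with the other, so $(XY)^{\mathcal{B}}=Y^{\mathcal{B}}X^{\mathcal{B}}$ reduces here to the trivial $X_{ij}Y_{jl}=Y_{jl}X_{ij}$. Carrying out this case analysis (block column $j=1$ of $X$ versus block rows $j\geq 2$ of $Y$) is the only nontrivial verification needed to close the argument.
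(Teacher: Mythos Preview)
The paper does not give its own proof of this statement; it is quoted verbatim as \cite[Theorem 2]{ortho} and simply cited, with the surrounding remark that ``the proofs of the results are similar to those in Section~\ref{sect:m1}'' referring to the later lemmas and theorems of Section~\ref{sect:m2}, not to this cited theorem. So there is no proof in the paper to compare against.

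That said, your argument is correct and is precisely the natural route suggested by the paper's phrase ``transposed version of $\mathbb{M}_{1}(P)$''. Your key equivalence --- that $L(\lambda)\in\mathbb{M}_2(P)$ with left ansatz vector $w$ if and only if $L(\lambda)^{\mathcal{B}}\in\mathbb{M}_1(P)$ with right ansatz vector $w$ --- is exactly right, and your blockwise verification via $\sum_i\phi_{k-i}(\lambda)L_{ij}(\lambda)=w_jP(\lambda)$ is clean. The only delicate point, which you correctly identified and handled, is that block-transpose does not reverse products in general: one needs $X_{ij}Y_{jl}=Y_{jl}X_{ij}$ termwise, and your case split ($j=1$: the block $X_{i1}=w_iI_m$ is scalar; $j\ge 2$: the block $Y_{jl}$ comes from $M_\Phi(\lambda)\otimes I_m$ and is scalar) covers every summand. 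Both directions of the equivalence then follow from \cite[Theorem~1]{ortho} applied to $L(\lambda)^{\mathcal{B}}$, together with the involutive property $(L^{\mathcal{B}})^{\mathcal{B}}=L$ and the elementary identity $[w\otimes I_m\ \ H]^{\mathcal{B}}=\left[\begin{smallmatrix}w^T\otimes I_m\\ H^{\mathcal{B}}\end{smallmatrix}\right]$.
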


The vector space $\M_{2}(P)$ reduces to the well-known space $\mathbb{L}_{2}(P)$ when $\{\phi_{k}(\lambda)\}_{k=0}^{\infty}$ is the monomial basis, see \cite{MMMM}. Lemma \ref{strongblockrow} is for $\M_{2}(P)$ the counterpart of Lemma \ref{strongblock} for $\mathbb{M}_{1}(P)$ and can be used to proceed with $\M_{2}(P)$ analogously as we did with $\mathbb{M}_{1}(P).$

\begin{lem}\label{strongblockrow} $F_{\Phi}^{P}(\lambda)^{\mathcal{B}}$ is a strong block minimal bases pencil with only one block row associated to $P(\lambda)$ with sharp degree. 
\end{lem}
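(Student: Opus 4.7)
The plan is to exploit the parallelism with Lemma \ref{strongblock} and reduce the statement to facts already established there, together with an elementary identification of the block structure required by Definition \ref{def:minlinearizations}. First I would recall the explicit shape
$$F_{\Phi}^{P}(\lambda)^{\mathcal{B}}=\bigl[\,m_{\Phi}^{P}(\lambda)^{\mathcal{B}}\;\;\;M_{\Phi}(\lambda)^{T}\otimes I_{m}\,\bigr],$$
and identify this with the pattern of Definition \ref{def:minlinearizations} in the one-block-row case, i.e., with $K_{1}(\lambda)$ empty ($\widehat{m}=0$), by setting $M(\lambda)=m_{\Phi}^{P}(\lambda)^{\mathcal{B}}$ and $K_{2}(\lambda)=M_{\Phi}(\lambda)\otimes I_{m}$. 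The dimensions match with $p=m$, $\widehat{p}=(k-1)m$, $N_{1}(\lambda)=I_{m}$, and the candidate dual minimal basis $N_{2}(\lambda)=\Phi_{k}(\lambda)^{T}\otimes I_{m}$.

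Next I would import from Lemma \ref{strongblock} that $M_{\Phi}(\lambda)\otimes I_{m}$ and $\Phi_{k}(\lambda)^{T}\otimes I_{m}$ form a pair of dual minimal bases, that all row degrees of $K_{2}(\lambda)$ are equal to $1$, and that all row degrees of $N_{2}(\lambda)$ are equal to $k-1$. Since $N_{1}(\lambda)=I_{m}$ is trivially a minimal basis dual to an empty matrix, this takes care of all the minimal-bases requirements in Definition \ref{def:minlinearizations}.

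The only remaining point is the factorization $P(\lambda)=N_{2}(\lambda)\,M(\lambda)\,N_{1}(\lambda)^{T}$. Writing $m_{\Phi}^{P}(\lambda)=[\,B_{1}\;B_{2}\;\cdots\;B_{k}\,]$ as a $1\times k$ block row of $m\times m$ pencils, the block-transpose is the corresponding $k\times 1$ block column, and a direct computation gives
$$(\Phi_{k}(\lambda)^{T}\otimes I_{m})\,m_{\Phi}^{P}(\lambda)^{\mathcal{B}}=\sum_{i=1}^{k}\phi_{k-i}(\lambda)\,B_{i}=m_{\Phi}^{P}(\lambda)(\Phi_{k}(\lambda)\otimes I_{m})=P(\lambda),$$
where the last equality is the defining property of $m_{\Phi}^{P}(\lambda)$ recalled just before \eqref{efe}. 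Hence the required factorization holds with $N_{1}(\lambda)^{T}=I_{m}$.

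Finally, sharp degree is immediate: $\deg(P(\lambda))=k=(k-1)+0+1=\deg(N_{2}(\lambda))+\deg(N_{1}(\lambda))+1$. The only genuinely new ingredient compared with Lemma \ref{strongblock} is the identity in the displayed equation above; this is the step I would double-check carefully, since it is where the symmetry between the roles of $m_{\Phi}^{P}$ and $\Phi_{k}$ is transferred across the block-transpose. Everything else is a re-labelling of the objects already handled in the previous section, which is why the authors feel entitled to omit the proof.
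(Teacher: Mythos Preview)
Your proposal is correct and follows exactly the approach the paper intends: the authors explicitly state that the proofs in Section \ref{sect:m2} are similar to those in Section \ref{sect:m1} and omit them, and your argument is precisely the transposed version of the proof of Lemma \ref{strongblock}, with $K_{1}(\lambda)$ empty, $K_{2}(\lambda)=M_{\Phi}(\lambda)\otimes I_{m}$, $N_{2}(\lambda)=\Phi_{k}(\lambda)^{T}\otimes I_{m}$, and the factorization check $(\Phi_{k}(\lambda)^{T}\otimes I_{m})\,m_{\Phi}^{P}(\lambda)^{\mathcal{B}}=P(\lambda)$ as the only genuinely new (and straightforward) verification.
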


 In particular, Lemma \ref{strongblockrow} allows us to apply \cite[Theorem 8.11]{strong} to the strong linearization $F_{\Phi}^{D}(\lambda)^{\mathcal{B}}$ of the polynomial part of a square rational matrix, with $K_{2}(\lambda)=M_{\Phi}(\lambda)\otimes I_{m},$ 
  $\widehat{K}_{2}=e_{k}^{T}\otimes I_{m},$ $K_{1}(\lambda)$ empty and $\widehat{K}_{1}=I_{m}.$ Thus, we get the following results to obtain strong linearizations of a square rational matrix $G(\la)=D(\la)+G_{sp}(\la)$ expressed as in \eqref{eq.polspdec} from strong linearizations in $\mathbb{M}_{2}(D).$

\begin{theo}\label{seg} Under the same assumptions as in Theorem \ref{prim}, the linear polynomial matrix
	$$\mathcal{L}(\lambda)= \left[
	\begin{array}{c|c}
	
	X(\lambda I_{n}-A)Y& XB\quad  0_{n\times (k-1)m}\\
	\hline 
	
	\begin{array}{c}
	0_{(k-1)m\times n}\\
	-CY
	\end{array}& F_{\Phi}^{D}(\lambda)^{\mathcal{B}}
	\end{array}
	\right]$$
	is a strong linearization of $G(\lambda).$
	
\end{theo}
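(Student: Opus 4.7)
The plan is to mirror the proof of Theorem \ref{prim}, swapping the roles of block rows and block columns to accommodate the $\M_2$ (rather than $\M_1$) setting. The core ingredients are already at hand: Lemma \ref{strongblockrow} identifies $F_\Phi^D(\lambda)^{\mathcal{B}}$ as a strong block minimal bases pencil of sharp degree associated to $D(\lambda)$, and Lemma \ref{unimod} provides the unimodular extension of the relevant minimal basis.

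First I would spell out the block structure of $F_\Phi^D(\lambda)^{\mathcal{B}}$: since
$$F_\Phi^D(\lambda)^{\mathcal{B}} = \bigl[\, m_\Phi^D(\lambda)^{\mathcal{B}} \;\; M_\Phi(\lambda)^T \otimes I_m \,\bigr],$$
comparing with \eqref{eq:minbaspencil}, the role of $K_2(\lambda)^T$ is played by $M_\Phi(\lambda)^T \otimes I_m$ (so that $K_2(\lambda) = M_\Phi(\lambda) \otimes I_m$), while $K_1(\lambda)$ is empty and $M(\lambda) = m_\Phi^D(\lambda)^{\mathcal{B}}$. By Lemma \ref{strongblock}, applied symmetrically, $\Phi_k(\lambda)^T \otimes I_m$ is a minimal basis dual to $K_2(\lambda)$, and this dual basis is exactly what is needed to recover $D(\lambda)$ via the block-transposed identity $D(\lambda) = (\Phi_k(\lambda)^T \otimes I_m)\, m_\Phi^D(\lambda)^{\mathcal{B}}$.

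Second, I would invoke Lemma \ref{unimod}, which guarantees that $\bigl[\begin{smallmatrix} M_\Phi(\lambda)\otimes I_m \\ e_k^T\otimes I_m \end{smallmatrix}\bigr]$ is unimodular with $\Phi_k(\lambda)\otimes I_m$ appearing as the last block column of its inverse. This is exactly the unimodular extension of $K_2(\lambda)$ required by \cite[Theorem 8.11]{strong}, with the augmenting row $\widehat{K}_2 = e_k^T \otimes I_m$; since $K_1(\lambda)$ is empty, the corresponding $\widehat{K}_1$ is simply $I_m$.

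With these identifications, applying \cite[Theorem 8.11]{strong} directly produces a strong linearization of $G(\lambda)$ built from the minimal state-space realization $G_{sp}(\lambda) = C(\lambda I_n - A)^{-1}B$ and from $F_\Phi^D(\lambda)^{\mathcal{B}}$, whose block pattern agrees with $\mathcal{L}(\lambda)$ as stated (the $B$-column and the $-C$-row are attached on the $\widehat{K}_1$ and $\widehat{K}_2$ sides, respectively, which is why the zero padding for the $(k-1)m$ block appears next to $XB$ in the top block-row and above $-CY$ in the left block-column). Finally, the presence of the nonsingular matrices $X$ and $Y$ is handled exactly as in Theorem \ref{prim}: multiplying on the left and right by $\diag(X, I_{(k-1)m+m})$ and $\diag(Y, I_{(k-1)m+m})$ preserves the strong linearization property by Lemma \ref{more}. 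I do not anticipate any real obstacle; the only care needed is bookkeeping the block positions to ensure the output of \cite[Theorem 8.11]{strong} matches the displayed form of $\mathcal{L}(\lambda)$ verbatim.
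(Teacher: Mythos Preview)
Your proposal is correct and follows essentially the same route as the paper: apply \cite[Theorem 8.11]{strong} using Lemma~\ref{strongblockrow} (and the unimodular completion from Lemma~\ref{unimod}) with $K_{2}(\lambda)=M_{\Phi}(\lambda)\otimes I_{m}$, $\widehat{K}_{2}=e_{k}^{T}\otimes I_{m}$, $K_{1}(\lambda)$ empty and $\widehat{K}_{1}=I_{m}$. The only cosmetic difference is that in the paper the freedom in $X,Y$ is already built into \cite[Theorem 8.11]{strong}, so there is no need to invoke Lemma~\ref{more} separately for that step (though doing so is harmless).
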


\begin{theo}\label{mdos} Under the same assumptions as in Theorem \ref{prim}, let $w\in\F^{k},$ $H\in\F^{km\times (k-1)m}$ with $\left[\begin{array}{c}
	w^{T}\otimes I_{m} \\
	H^{\mathcal{B}}
	\end{array}\right]$ nonsingular and let $L(\lambda)=F_{\Phi}^{D}(\lambda)^{\mathcal{B}}\left[\begin{array}{c}
	w^{T}\otimes I_{m} \\
	H^{\mathcal{B}}
	\end{array}\right]\in\mathbb{M}_{2}(D).$ Then the linear polynomial matrix
	$$\mathcal{L}(\lambda)= \left[
	\begin{array}{c|c}
	
	X(\lambda I_{n}-A)Y& XB(w^{T}\otimes I_{m})\\
	\hline \phantom{\Big|}
	
		\begin{array}{c}
		0_{(k-1)m\times n}\\
		-CY
		\end{array}& L(\lambda)
	\end{array}
	\right]$$
	is a strong linearization of $G(\lambda).$
	
\end{theo}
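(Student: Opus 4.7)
The plan is to parallel the proof of Theorem \ref{muno}, using Theorem \ref{seg} as the base strong linearization and transforming it via a right-multiplication with Lemma \ref{more}. The hypothesis that $K := \begin{bmatrix} w^{T}\otimes I_{m} \\ H^{\mathcal{B}} \end{bmatrix}$ is nonsingular is precisely what is needed to play the role of $Q_4$ in Lemma \ref{more}, and the characterization of $\mathbb{M}_2(D)$ gives directly $L(\lambda) = F_{\Phi}^{D}(\lambda)^{\mathcal{B}}\, K$.

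The main step is a block multiplication: I would form the product of the strong linearization from Theorem \ref{seg} with $\begin{bmatrix} I_n & 0 \\ 0 & K \end{bmatrix}$ on the right. The top-left block remains $X(\lambda I_n - A)Y$; the bottom-left remains $\begin{bmatrix} 0_{(k-1)m\times n} \\ -CY \end{bmatrix}$; the top-right becomes $[XB\quad 0_{n\times (k-1)m}]\, K = XB(w^T \otimes I_m)$; and the bottom-right becomes $F_{\Phi}^{D}(\lambda)^{\mathcal{B}}\, K = L(\lambda)$. The resulting pencil is exactly the $\mathcal{L}(\lambda)$ in the statement.

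To conclude, I would apply Lemma \ref{more} with $Q_1 = Q_3 = I_n$, $Q_2 = I_{km}$, $Q_4 = K$, $W = 0$, and $Z = 0$; since $K$ is nonsingular these choices are admissible, and since the starting pencil is a strong linearization of $G(\lambda)$ by Theorem \ref{seg}, the lemma guarantees that $\mathcal{L}(\lambda)$ is also a strong linearization of $G(\lambda)$. There is no substantive obstacle in this proof, as it is essentially a direct consequence of Lemma \ref{more} applied to Theorem \ref{seg}; the only point requiring care is to check that the column partition $m + (k-1)m$ of the blocks adjacent to $K$ matches the row partition of $K$, so that the block multiplication makes sense.
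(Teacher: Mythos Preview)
Your proof is correct and follows exactly the same approach as the paper: multiply the strong linearization from Theorem \ref{seg} on the right by $\diag(I_n,K)$ with $K=\begin{bmatrix} w^{T}\otimes I_{m} \\ H^{\mathcal{B}} \end{bmatrix}$ nonsingular, and invoke Lemma \ref{more}. The block-multiplication check you describe is precisely the computation underlying the paper's one-line proof.
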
 

\begin{proof}
	We apply Lemma \ref{more} by multiplying on the right the matrix $ 	\mathcal{L}(\lambda)$ in Theorem \ref{seg} by the matrix $\left[\begin{array}{cc}
	I_{n} & 0 \\
	0 & K
	\end{array}\right]$ with $K=\left[\begin{array}{c}
	w^{T}\otimes I_{m} \\
	H^{\mathcal{B}}
	\end{array}\right]$ nonsingular.
\end{proof}

The strong linearizations of rational matrices constructed in Theorem \ref{mdos} will be called \textit{$\M_{2}$-strong linearizations.}

\section{Recovering eigenvectors from $\mathbb{M}_{1}$- and $\mathbb{M}_{2}$- strong linearizations of rational matrices}\label{recovery}

In this section we will recover right and left eigenvectors of a rational matrix. These eigenvectors will be obtained without essentially computational cost from the right and left eigenvectors of the strong linearizations that we have constructed in Theorems \ref{muno} and \ref{mdos}. Previously, and due to the fact that we can see strong linearizations as polynomial system matrices, in Subsection \ref{eigenvectors} we will see the relation between the eigenvectors of a polynomial system matrix and the eigenvectors of its transfer function matrix. For the sake of brevity, in this section the following nomenclature is adopted: ``$(\la_0,x_0)$ is a solution of the REP $G(\la)x=0$'' means that $\la_0$ is a finite eigenvalue of $G(\la)\in\F(\la)^{p\times m}$ and $x_0$ is a right eigenvector corresponding to $\la_0,$ and ``$(\la_0,x_0)$ is a solution of the REP $x^{T}G(\la)=0$'' means that $\la_0$ is a finite eigenvalue of $G(\la)\in\F(\la)^{p\times m}$ and $x_0$ is a left eigenvector corresponding to $\la_0.$ An analogous notation is adopted for polynomial eigenvalue problems (PEPs) and the particular case of linear eigenvalue problems (LEPs).

\subsection{Eigenvectors of polynomial system and transfer function matrices}\label{eigenvectors}

We know from \cite[Proposition 3.1]{strong} how to recover right eigenvectors of a polynomial system matrix $P(\lambda)$ from those of its transfer function $G(\lambda),$ and conversely. In Proposition \ref{righteigen} we state a extended version of \cite[Proposition 3.1]{strong} that includes a result about the null-spaces of $P(\la)$ and $G(\la)$ evaluated at the eigenvalue of interest. That is, for a finite eigenvalue $\lambda_{0}$ of a rational matrix $G(\lambda)\in\FF(\la)^{p\times m}$ we denote by $\mathcal{N}_r (G(\la_{0}))$ the right null-space over $\overline{\FF}$ of $G(\la_{0})$, i.e., $\mathcal{N}_r (G(\la_{0}))=\{x\in\overline{\FF}^{m\times 1}: G(\la_{0})x=0\}.$ We state without proof the analogous result for left eigenvectors and null-spaces in Proposition \ref{lefteigen}. 

In what follows, we assume that eigenvectors of the form $\begin{bmatrix}y\\x\end{bmatrix}$ are partitioned conformable to the corresponding polynomial system matrix.

\begin{prop} \label{righteigen} Let $G(\la)\in\FF(\la)^{p\times m}$ be a rational matrix and
	\[
	P(\la)=\begin{bmatrix}
	A(\la) & B(\la)\\
	-C(\la) & D(\la)
	\end{bmatrix}\in \FF[\la]^{(n+p)\times (n+m)}
	\]
	be any polynomial system matrix with $G(\la)$ as transfer function matrix. 
	\begin{itemize}
		
		 \item[a)]If $\left(\la_0,\begin{bmatrix}y_0\\x_0\end{bmatrix}\right)$ is a solution of the PEP $P(\la)z=0$ such that $\det A(\la_0)\neq 0$, then $(\la_0,x_0)$ is a solution of the REP $G(\la)x=0$.
		 \item[b)] Moreover, if $\left\{\begin{bmatrix}y_1\\x_1\end{bmatrix},\ldots,\begin{bmatrix}y_t\\x_t\end{bmatrix}\right\}$ is a basis of $\mathcal{N}_{r}(P(\lambda_{0})),$ with $\det A(\la_0)\neq 0$, then $\{x_{1},\dots,x_{t}\}$ is a basis of $\mathcal{N}_{r}(G(\lambda_{0})).$
		 
		\item[c)]Conversely, if $(\la_0,x_0)$ is a solution of the REP $G(\la)x=0$ such that $\det A(\la_0)\neq 0$ and $y_0$ is defined as the
		unique solution of  $A(\la_0)y_0+B(\la_0)x_0=0,$ then
		$\left(\la_0,\begin{bmatrix}y_0\\x_0\end{bmatrix}\right)$ is a
		solution of the PEP $P(\la)z=0$.
		\item[d)]  Moreover, if $\{x_{1},\dots,x_{t}\}$ is a basis of $\mathcal{N}_{r}(G(\lambda_{0})),$ with $\det A(\la_0)\neq 0$, and, for $i=1,\ldots,t,$ $y_i$ is defined as the
		unique solution of  $A(\la_0)y_i+B(\la_0)x_i=0,$ then $\left\{\begin{bmatrix}y_1\\x_1\end{bmatrix},\ldots,\begin{bmatrix}y_t\\x_t\end{bmatrix}\right\}$ is a basis of $\mathcal{N}_{r}(P(\lambda_{0})).$

	\end{itemize}
\end{prop}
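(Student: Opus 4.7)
The plan is to establish a single linear isomorphism between the right nullspaces $\mathcal{N}_r(P(\lambda_0))$ and $\mathcal{N}_r(G(\lambda_0))$ under the hypothesis $\det A(\lambda_0)\neq 0$; all four assertions then fall out as its consequences.

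First, I would handle (a) and (c) by direct computation. Expanding $P(\lambda_0)\begin{bmatrix}y_0\\x_0\end{bmatrix}=0$ yields the two block equations
\begin{equation*}
A(\lambda_0)y_0+B(\lambda_0)x_0=0,\qquad -C(\lambda_0)y_0+D(\lambda_0)x_0=0.
\end{equation*}
Invertibility of $A(\lambda_0)$ lets me solve the first for $y_0=-A(\lambda_0)^{-1}B(\lambda_0)x_0$, and substitution into the second gives $(D(\lambda_0)+C(\lambda_0)A(\lambda_0)^{-1}B(\lambda_0))x_0=G(\lambda_0)x_0=0$. For nontriviality: if $x_0=0$ then $A(\lambda_0)y_0=0$ forces $y_0=0$, contradicting $\begin{bmatrix}y_0\\x_0\end{bmatrix}\ne 0$; so $x_0$ is a genuine right eigenvector. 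The converse direction (c) is the reverse construction: define $y_0=-A(\lambda_0)^{-1}B(\lambda_0)x_0$ and verify by substitution that both block equations hold.

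To confirm that $\lambda_0$ is indeed a \emph{finite eigenvalue} on both sides, I would invoke the Schur complement identity (valid as a rational identity wherever $A(\lambda)$ is invertible),
\begin{equation*}
\begin{bmatrix}I & 0\\ C(\lambda)A(\lambda)^{-1} & I\end{bmatrix}P(\lambda)\begin{bmatrix}I & -A(\lambda)^{-1}B(\lambda)\\ 0 & I\end{bmatrix}=\begin{bmatrix}A(\lambda) & 0\\ 0 & G(\lambda)\end{bmatrix}.
\end{equation*}
Evaluating at any $\mu$ with $\det A(\mu)\ne 0$ gives $\rank P(\mu)=n+\rank G(\mu)$, whence $\text{nrank}(P)=n+\text{nrank}(G)$ and also $\dim\mathcal{N}_r(P(\lambda_0))=\dim\mathcal{N}_r(G(\lambda_0))$. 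Therefore $\lambda_0$ drops rank in $P$ if and only if it drops rank in $G$, settling that eigenvalues correspond.

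For parts (b) and (d), I would package the above as the linear map
\begin{equation*}
\phi:\mathcal{N}_r(P(\lambda_0))\longrightarrow \mathcal{N}_r(G(\lambda_0)),\qquad \begin{bmatrix}y\\x\end{bmatrix}\mapsto x.
\end{equation*}
It is well defined by (a), injective (if $\phi([y;x])=0$ then $A(\lambda_0)y=0$, so $y=0$), and surjective by the construction in (c) (alternatively by the dimension equality just established). Hence $\phi$ is an isomorphism. Then (b) follows: the images $x_i=\phi([y_i;x_i])$ form a basis of $\mathcal{N}_r(G(\lambda_0))$. Similarly (d) follows by applying $\phi^{-1}$, which maps $x_i$ to the unique $\begin{bmatrix}y_i\\x_i\end{bmatrix}$ with $y_i=-A(\lambda_0)^{-1}B(\lambda_0)x_i$, exactly the vector determined by $A(\lambda_0)y_i+B(\lambda_0)x_i=0$.

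The main obstacle is conceptual rather than computational: one must verify the equivalence of ``finite eigenvalue'' in the two settings, for which the global Schur complement factorization is the cleanest tool, and one must be careful in (b) to argue the linear independence of the $x_i$'s (which is precisely injectivity of $\phi$). The remaining work is essentially bookkeeping based on the invertibility of $A(\lambda_0)$.
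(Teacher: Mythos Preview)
Your proof is correct and follows essentially the same route as the paper: direct block computation for (a) and (c), the Schur complement factorization of $P(\lambda_0)$ to obtain $\dim\mathcal{N}_r(P(\lambda_0))=\dim\mathcal{N}_r(G(\lambda_0))$, and then the bijection between the two nullspaces to deduce (b) and (d). The only cosmetic difference is that you name the projection $\phi$ explicitly and argue via injectivity/surjectivity, whereas the paper phrases the same step as ``linear independence together with the observation that $P(\lambda_0)\begin{bmatrix}y_0\\x_0\end{bmatrix}=0$ iff $y_0=-A(\lambda_0)^{-1}B(\lambda_0)x_0$ and $G(\lambda_0)x_0=0$''.
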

\begin{proof} The statements $a)$ and $c)$ are the results in \cite[Proposition 3.1]{strong} stated here for a rectangular matrix $G(\la).$ The proofs are exactly the same as in \cite{strong} and, therefore, are omitted. To prove $b)$ and $d)$ we write 
	$$\begin{bmatrix}
	A(\la_{0}) & B(\la_{0})\\
	-C(\la_{0}) & D(\la_{0})
	\end{bmatrix}=\begin{bmatrix}
	I_n & 0\\
	-C(\la_{0})A(\la_{0})^{-1} & I_p
	\end{bmatrix} 
	\begin{bmatrix}
	A(\lambda_{0}) & 0\\
	0 & G(\lambda_{0})
	\end{bmatrix}
	\begin{bmatrix}
	I_n & A(\la_{0})^{-1}B(\la_{0})\\
	0 & I_m
	\end{bmatrix}.$$
	Since $\det A(\la_0)\neq 0,$ $\text{rank}(P(\la_{0}))=n+\text{rank}(G(\la_{0})).$ Therefore
	\begin{equation}\label{dimequal}
\text{dim}\;\mathcal{N}_{r}(P(\lambda_{0}))=\text{dim}\;\mathcal{N}_{r}(G(\lambda_{0})).
	\end{equation} 
	Then $b)$ and $d)$ are obtained by using $a)$ and $c),$ respectively, taking \eqref{dimequal} and the linear independence of the considered sets into account, and observing that $P(\la_0)\begin{bmatrix}y_0\\x_0\end{bmatrix}=0$ if and only if $y_0=-A(\la_0)^{-1}B(\la_0)x_0$ and $G(\la_0)x_0=0.$ 
\end{proof}

Proposition \ref{lefteigen} is an analogous result to Proposition \ref{righteigen} for left eigenvectors and left null-spaces as we announced, and it can be proved in a similar way. The left null-space of $G(\la_0)\in\overline{\F}^{p\times m}$ is denoted and defined as $\mathcal{N}_\ell (G(\la_{0}))=\{x\in\overline{\F}^{p\times 1}: x^{T}G(\la_{0})=0\}.$

\begin{prop}\label{lefteigen}	Let $G(\lambda)\in\F(\lambda)^{p\times m}$ be a rational matrix and $$P(\lambda)=\left[\begin{array}{cc}
	A(\lambda) & B(\lambda)\\
	-C(\lambda) & D(\lambda)
	\end{array}\right]\in \F[\lambda]^{(n+p)\times(n+m)}$$ be any polynomial system matrix with $G(\lambda)$ as transfer function matrix. 
		\begin{itemize}
			\item[a)] If $\left(\lambda_{0},\begin{bmatrix}y_0\\x_0\end{bmatrix}\right)$  is a solution of the PEP $z^{T}P(\lambda)=0$ such that $\det A(\lambda_{0})\neq 0,$ then $(\lambda_{0},x_{0})$ is a solution of the REP $x^{T}G(\lambda)=0.$

	\item[b)] 	Moreover, if $\left\{\begin{bmatrix}y_1\\x_1\end{bmatrix},\ldots,\begin{bmatrix}y_q\\x_q\end{bmatrix}\right\}$ is a basis of $\mathcal{N}_{\ell}(P(\lambda_{0})),$ with $\det A(\la_0)\neq 0$, then $\{x_{1},\dots,x_{q}\}$ is a basis of $\mathcal{N}_{\ell}(G(\lambda_{0})).$   
	
	\item[c)] Conversely, if $( \lambda_{0},x_{0} )$ is a solution of the REP $x^{T}G(\lambda)=0$ such that $\det A(\lambda_{0})\neq 0,$ and $y_{0}$ is defined as the unique solution of $y_{0}^{T}A(\lambda_{0})-x_{0}^{T}C(\lambda_{0})=0,$ then $\left(\lambda_{0},\begin{bmatrix}y_0\\x_0\end{bmatrix}\right)$ is a solution of the PEP $z^{T}P(\lambda)=0.$ 
	\item[d)] Moreover, if $\{x_{1},\dots,x_{q}\}$ is a basis of $\mathcal{N}_{\ell}(G(\lambda_{0})),$ with $\det A(\la_0)\neq 0$, and, for $i=1,\ldots,q,$ $y_i$ is defined as the
	unique solution of  $y_{i}^{T}A(\la_0)-x_{i}^{T} C( \la_{0})=0,$ then $\left\{\begin{bmatrix}y_1\\x_1\end{bmatrix},\ldots,\begin{bmatrix}y_q\\x_q\end{bmatrix}\right\}$ is a basis of $\mathcal{N}_{\ell}(P(\lambda_{0})).$

\end{itemize}
\end{prop}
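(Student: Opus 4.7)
The plan is to mirror the proof of Proposition \ref{righteigen} by working with the same block factorization
$$P(\la_0)=\begin{bmatrix}
I_n & 0\\
-C(\la_{0})A(\la_{0})^{-1} & I_p
\end{bmatrix}
\begin{bmatrix}
A(\lambda_{0}) & 0\\
0 & G(\lambda_{0})
\end{bmatrix}
\begin{bmatrix}
I_n & A(\la_{0})^{-1}B(\la_{0})\\
0 & I_m
\end{bmatrix},$$
which is valid whenever $\det A(\la_0)\neq 0$, but now multiplying by a row vector $[y_0^T\;\; x_0^T]$ on the left rather than a column on the right.

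For part $a)$, I would set $\tilde{y}_0^T = y_0^T - x_0^T C(\la_0) A(\la_0)^{-1}$ and observe that $[y_0^T\;\; x_0^T]\,P(\la_0) = 0$ is equivalent, via the factorization, to the two conditions $\tilde{y}_0^T A(\la_0) = 0$ and $x_0^T G(\la_0) = 0$. Since $A(\la_0)$ is nonsingular, the first forces $\tilde{y}_0 = 0$, i.e.\ $y_0^T = x_0^T C(\la_0) A(\la_0)^{-1}$. A nonzero $[y_0^T\;\;x_0^T]$ then forces $x_0\neq 0$, and the second equation shows $(\la_0,x_0)$ solves the REP $x^T G(\la) = 0$. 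Part $c)$ is the converse and is immediate: given $x_0^T G(\la_0)=0$ with $x_0\neq 0$, define $y_0$ via $y_0^T A(\la_0) = x_0^T C(\la_0)$ and verify directly that $[y_0^T\;\; x_0^T] P(\la_0) = 0$ (the first block column reduces to $y_0^T A(\la_0) - x_0^T C(\la_0) = 0$ by construction, and the second block column reduces to $x_0^T G(\la_0) = 0$ after cancellation).

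For parts $b)$ and $d)$, I would first establish a left-sided rank identity. From the factorization, $\rank P(\la_0) = n + \rank G(\la_0)$, so by the rank-nullity theorem
$$\dim \mathcal{N}_\ell(P(\la_0)) = (n+p) - \rank P(\la_0) = p - \rank G(\la_0) = \dim \mathcal{N}_\ell(G(\la_0)).$$
The map $[y_i^T\;\;x_i^T] \mapsto x_i^T$ from $\mathcal{N}_\ell(P(\la_0))$ to $\mathcal{N}_\ell(G(\la_0))$ established by $a)$ is injective (if $x_i = 0$ then $y_i = 0$ by the nonsingularity of $A(\la_0)$), and by the dimension equality it is a bijection. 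Thus a basis of $\mathcal{N}_\ell(P(\la_0))$ maps to a basis of $\mathcal{N}_\ell(G(\la_0))$, proving $b)$, and conversely any basis of $\mathcal{N}_\ell(G(\la_0))$ lifts, via the rule in $c)$, to a basis of $\mathcal{N}_\ell(P(\la_0))$, proving $d)$.

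I do not anticipate a serious obstacle: the whole argument is a transposed copy of the proof of Proposition \ref{righteigen}, with the only care needed being to correctly identify the lifted component as $y_0^T = x_0^T C(\la_0) A(\la_0)^{-1}$ rather than $y_0 = -A(\la_0)^{-1}B(\la_0)x_0$, and to use the factorization with left rather than right multiplication.
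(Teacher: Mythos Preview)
Your proposal is correct and follows exactly the approach the paper indicates: the paper does not write out a proof of Proposition \ref{lefteigen} but states that it ``is an analogous result to Proposition \ref{righteigen} for left eigenvectors and left null-spaces \ldots and it can be proved in a similar way,'' which is precisely the transposed argument you carry out using the same block factorization of $P(\la_0)$. Your identification of the lifted component $y_0^T = x_0^T C(\la_0)A(\la_0)^{-1}$ and the dimension count for parts $b)$ and $d)$ are both correct.
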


\begin{rem}\label{singularcase1} \rm If $G(\lambda)$ is singular, then for any $\la_0\in\overline{\F}$ that is not a pole of $G(\lambda),$ including those $\la_0$ that are not eigenvalues of $G(\la),$ $\mathcal{N}_{r}(G(\lambda_{0}))\neq \{0\}$ or $\mathcal{N}_{\ell}(G(\lambda_{0}))\neq \{0\}.$ The reader can check easily that Propositions \ref{righteigen} and \ref{lefteigen} remain valid for any $\la_0\in\overline{\F}$ that is not a pole of $G(\lambda)$ in the case $G(\la)$ is singular.
\end{rem}

\subsection{Eigenvectors from $\M_{1}$-strong linearizations}\label{eigenvectorfromm1}
We consider in this subsection the linearizations that we have constructed in Theorem \ref{muno}, which we called $\M_{1}$-strong linearizations. We will recover the eigenvectors of a rational matrix $G(\lambda)$ from those of its $\M_{1}$-strong linearizations, and conversely. Lemma \ref{lemmaright} will be used for this purpose.
\begin{lem}\label{lemmaright} Let $G(\lambda)\in\FF(\la)^{m\times m}$ be a rational matrix with polynomial part of degree $k\geq 2,$ let $$\mathcal{L}(\lambda)= \left[
	\begin{array}{c|c}
	
	X(\lambda I_{n}-A)Y& 0_{n\times (k-1)m}\quad XB\\
	\hline \phantom{\Big|}
	
	-(v\otimes I_{m})CY& L(\lambda)
	\end{array}
	\right]$$
	be an $\M_{1}$-strong linearization of $G(\lambda),$ and let $\wh{G}(\lambda)$ be the transfer function of $\mathcal{L}(\lambda).$ Then 
	\begin{equation}\label{transfer}
	\wh{G}(\lambda)(\Phi_{k}(\lambda)\otimes I_{m})=v\otimes G(\lambda).
	\end{equation}
	
\end{lem}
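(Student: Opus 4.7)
The plan is to compute $\wh{G}(\lambda)$ directly from the Schur complement formula applied to $\mathcal{L}(\lambda)$, and then verify \eqref{transfer} by splitting the product and exploiting the ansatz property of $L(\lambda)$.

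First, I would expand
$$\wh{G}(\la) = L(\la) + (v\otimes I_m)CY\,\bigl[X(\la I_n - A)Y\bigr]^{-1}\bigl[0_{n\times(k-1)m}\quad XB\bigr].$$
The factors $X, X^{-1}, Y, Y^{-1}$ cancel, the block of zeros absorbs the first $(k-1)m$ columns, and using the identity $(v\otimes I_m)M = v\otimes M$ for any $m\times m$ matrix $M$, this simplifies to
$$\wh{G}(\la) = L(\la) + \bigl[0_{km\times(k-1)m}\quad v\otimes G_{sp}(\la)\bigr],$$
where I have used that $G_{sp}(\la) = C(\la I_n - A)^{-1}B$.

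Next, I would multiply on the right by $\Phi_k(\la)\otimes I_m$ and treat the two summands separately. For the first summand, since $L(\la)\in\M_1(D)$ has right ansatz vector $v$, the defining property of $\M_1(D)$ gives
$$L(\la)(\Phi_k(\la)\otimes I_m) = v\otimes D(\la).$$
For the second summand, the key observation is that $\phi_0(\la) = 1$ is the last entry of $\Phi_k(\la)$, so the last $m\times m$ block of $\Phi_k(\la)\otimes I_m$ is $I_m$; consequently the block row of zeros annihilates the first $(k-1)m$ rows, and
$$\bigl[0_{km\times(k-1)m}\quad v\otimes G_{sp}(\la)\bigr](\Phi_k(\la)\otimes I_m) = v\otimes G_{sp}(\la).$$

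Adding the two contributions yields
$$\wh{G}(\la)(\Phi_k(\la)\otimes I_m) = v\otimes D(\la) + v\otimes G_{sp}(\la) = v\otimes\bigl(D(\la)+G_{sp}(\la)\bigr) = v\otimes G(\la),$$
which is \eqref{transfer}. The proof is entirely mechanical; the only points requiring a little care are the cancellation of the invertible factors $X,Y$ in the Schur complement and the bookkeeping of the Kronecker identity $(v\otimes I_m)M = v\otimes M$. There is no real obstacle beyond keeping the block sizes straight.
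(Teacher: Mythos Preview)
Your proof is correct and follows essentially the same approach as the paper: compute the transfer function $\wh{G}(\lambda)=L(\lambda)+[0_{km\times(k-1)m}\;\; (v\otimes I_m)G_{sp}(\lambda)]$, apply the ansatz relation $L(\lambda)(\Phi_k(\lambda)\otimes I_m)=v\otimes D(\lambda)$, and use $\phi_0(\lambda)=1$ to pick off the strictly proper contribution. The only cosmetic difference is that you spell out the cancellation of $X,Y$ and the Kronecker identity more explicitly than the paper does.
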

\begin{proof} We consider the transfer function of the matrix $ \mathcal{L}(\lambda),$
	$$\wh{G}(\lambda)= L(\lambda) + \left[0_{km\times (k-1)m}\quad (v\otimes I_{m})C(\lambda I_{n} - A)^{-1}B\right].$$ Let $D(\la)$ be the polynomial part of $G(\la)$. Since $L(\lambda)$ belongs to $\M_{1}(D),$ $L(\lambda)(\Phi_{k}(\lambda)\otimes I_m)=v\otimes D(\lambda)=(v\otimes I_m)D(\lambda).$ Therefore, we obtain  	
	\begin{equation*}
	\begin{split}
	\wh{G}(\lambda)(\Phi_{k}(\lambda)\otimes I_{m}) & = (L(\lambda) + \left[0_{km\times (k-1)m}\quad (v\otimes I_{m})C(\lambda I_{n} - A)^{-1}B\right])(\Phi_{k}(\lambda)\otimes I_{m}) \\
	& = (v\otimes I_{m})D(\lambda) + (v\otimes I_{m}) C(\lambda I_{n} - A)^{-1}B \\
	& = (v\otimes I_{m})G(\lambda).
	\end{split}
	\end{equation*}
\end{proof}
	
	\begin{rem}\label{dimensiones} \rm 
Since $\mathcal{L}(\lambda)$ is a strong linearization of the rational matrix $G(\lambda)$ we have, by Definition \ref{def_stronglin}, that there are unimodular matrices $U(\lambda),V(\lambda)\in\F[\lambda]^{km\times km}$ such that
\begin{equation}\label{unimodulareq}
U(\lambda)\wh{G}(\lambda)V(\lambda)=\diag(G(\lambda),I_{(k-1)m}).
\end{equation}
Thus, if we consider a finite eigenvalue $\lambda_{0}$ of $G(\lambda)$ then it is also of the transfer function $\wh{G}(\lambda)$ and
\begin{equation}\label{dim1}
\text{dim}\;\mathcal{N}_{r}(G(\lambda_{0}))=\text{dim}\;\mathcal{N}_{r}(\wh{G}(\lambda_{0})).
\end{equation}
By \cite[Theorem 6.11]{strong}, $\det (\lambda_{0}I_{n}-A)\neq 0.$ Thus, by Proposition \ref{righteigen}, 
\begin{equation}\label{dim2}
\text{dim}\;\mathcal{N}_{r}(\wh{G}(\lambda_{0}))=\text{dim}\;\mathcal{N}_{r}(\mathcal{L}(\lambda_{0})) .
\end{equation}
By \eqref{unimodulareq} and Proposition \ref{lefteigen}, we have the same equalities for the dimensions of the left null-spaces, i.e., 
\begin{equation}\label{dimleft}
\text{dim}\;\mathcal{N}_{\ell}(G(\lambda_{0}))=\text{dim}\;\mathcal{N}_{\ell}(\wh{G}(\lambda_{0}))\quad \text{and}\quad \text{dim}\;\mathcal{N}_{\ell}(\wh{G}(\lambda_{0}))=\text{dim}\;\mathcal{N}_{\ell}(\mathcal{L}(\lambda_{0})) .
\end{equation}
Moreover, notice that since $G(\la)$ is square, $\dim\mathcal{N}_{r}(G(\lambda_{0}))=\dim\mathcal{N}_{\ell}(G(\lambda_{0})).$ 
	\end{rem}
	
	A consequence of Lemma \ref{lemmaright} is that we can recover very easily right eigenvectors of a rational matrix $G(\lambda)$ from the eigenvectors of the transfer function $\wh{G}(\lambda)$ of any $\M_{1}$-strong linearization of $G(\la).$ We state that in Theorem \ref{righteigenvector}, and we emphasize that this result is in the spirit of the one presented in \cite[Proposition 3.1]{ortho} for polynomial matrices $P(\la)$ and their strong linearizations in $\M_{1}(P).$
	
	\begin{theo}\label{righteigenvector} Let $G(\lambda)\in\FF(\la)^{m\times m}$ be a rational matrix with polynomial part of degree $k\geq 2,$ and let $\wh{G}(\lambda)$ be the transfer function of the $\M_{1}$-strong linearization $$\mathcal{L}(\lambda)= \left[
		\begin{array}{c|c}
		
		X(\lambda I_{n}-A)Y& 0_{n\times (k-1)m}\quad XB\\
		\hline \phantom{\Big|}
		
		-(v\otimes I_{m})CY& L(\lambda)
		\end{array}
		\right]$$
		 of $G(\lambda).$ Let $\lambda_{0}$ be a finite eigenvalue of $G(\lambda).$ Then, $u\in \mathcal{N}_{r}(G(\lambda_{0}))$ if and only if $\Phi_{k}(\lambda_{0})\otimes u \in \mathcal{N}_{r}(\wh{G}(\lambda_{0})) .$ Moreover, $\{ u_{1},\dots, u_{t}\} $ is a basis of $\mathcal{N}_{r}(G(\lambda_{0}))$ if and only if $\{\Phi_{k}(\lambda_{0})\otimes u_{1},\dots,\Phi_{k}(\lambda_{0})\otimes u_{t}\}$ is a basis of $\mathcal{N}_{r}(\wh{G}(\lambda_{0})).$ 
		
	\end{theo}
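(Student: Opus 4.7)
The plan is to reduce the theorem to the identity in Lemma~\ref{lemmaright} evaluated at $\lambda=\lambda_0$, together with the dimension equality \eqref{dim1}--\eqref{dim2} from Remark~\ref{dimensiones}. Specialising Lemma~\ref{lemmaright} at $\lambda_0$ gives
$$\wh{G}(\lambda_0)\bigl(\Phi_{k}(\lambda_0)\otimes I_m\bigr)=v\otimes G(\lambda_0),$$
so for any $u\in\overline{\F}^{m}$ the Kronecker mixed-product rule yields
$$\wh{G}(\lambda_0)\bigl(\Phi_{k}(\lambda_0)\otimes u\bigr)=\bigl(\wh{G}(\lambda_0)(\Phi_{k}(\lambda_0)\otimes I_m)\bigr)u=(v\otimes G(\lambda_0))u=v\otimes(G(\lambda_0)u).$$
This identity is the engine of the proof; from it the ``if and only if'' will follow essentially at once.

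For the equivalence, first I would observe that the ansatz vector $v$ in an $\M_1$-strong linearization must be nonzero: by Theorem~\ref{muno} the matrix $[v\otimes I_m\; H]$ is required to be nonsingular, and $v=0$ would force its first $m$ columns to vanish. Consequently $v\otimes(G(\lambda_0)u)=0$ is equivalent to $G(\lambda_0)u=0$. Combining this with the identity above gives $\Phi_k(\lambda_0)\otimes u\in\mathcal{N}_r(\wh{G}(\lambda_0))$ iff $u\in\mathcal{N}_r(G(\lambda_0))$, settling the first assertion.

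For the basis statement, assume $\{u_1,\dots,u_t\}$ is a basis of $\mathcal{N}_r(G(\lambda_0))$. The first part places each $\Phi_k(\lambda_0)\otimes u_i$ in $\mathcal{N}_r(\wh{G}(\lambda_0))$. Linear independence follows from $\phi_{0}(\lambda_0)=1\neq 0$: indeed, if $\sum c_i(\Phi_k(\lambda_0)\otimes u_i)=\Phi_k(\lambda_0)\otimes\bigl(\sum c_i u_i\bigr)=0$, then the nonvanishing last block of $\Phi_k(\lambda_0)$ forces $\sum c_i u_i=0$, hence all $c_i=0$. Since $\dim\mathcal{N}_r(G(\lambda_0))=\dim\mathcal{N}_r(\wh{G}(\lambda_0))$ by Remark~\ref{dimensiones}, the $t$ linearly independent vectors $\Phi_k(\lambda_0)\otimes u_i$ already span $\mathcal{N}_r(\wh{G}(\lambda_0))$, so they form a basis. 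The converse direction is symmetric: if $\{\Phi_k(\lambda_0)\otimes u_i\}_{i=1}^{t}$ is a basis of $\mathcal{N}_r(\wh{G}(\lambda_0))$, the first part gives $u_i\in\mathcal{N}_r(G(\lambda_0))$, linear independence of the $u_i$'s is immediate from linear independence of the $\Phi_k(\lambda_0)\otimes u_i$'s, and the same dimension equality forces them to span.

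I do not expect any serious obstacle: the only delicate points are justifying $v\neq 0$ (handled by the nonsingularity hypothesis built into the definition of an $\M_1$-strong linearization) and extracting linear independence of the $u_i$'s from that of the $\Phi_k(\lambda_0)\otimes u_i$'s (handled by $\phi_0(\lambda_0)=1$). All remaining bookkeeping is the routine use of the Kronecker mixed-product rule and the dimension equalities already recorded in Remark~\ref{dimensiones}.
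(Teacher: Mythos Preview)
Your proposal is correct and follows essentially the same route as the paper: evaluate Lemma~\ref{lemmaright} at $\lambda_0$, deduce the equivalence for individual vectors, and then use the dimension equality from Remark~\ref{dimensiones} together with a straightforward linear-independence check to upgrade this to a statement about bases. The paper's proof is terser, leaving the facts $v\neq 0$ and $\phi_0(\lambda_0)=1$ implicit where you spell them out, but the structure of the argument is identical.
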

	\begin{proof} By Lemma \ref{lemmaright},
	$\wh{G}(\lambda_0)(\Phi_{k}(\lambda_0)\otimes I_{m})=v\otimes G(\lambda_0). $ Thus, it is easy to see that $u\in \mathcal{N}_{r}(G(\lambda_{0}))$ if and only if $\Phi_{k}(\lambda_{0})\otimes u \in \mathcal{N}_{r}(\wh{G}(\lambda_{0})) .$ Consider $\{u_{1},\ldots,u_{t}\}$ a basis of $\mathcal{N}_{r}(G(\lambda_{0})).$ Therefore, as $\text{dim}\;\mathcal{N}_{r}(G(\lambda_{0}))=\text{dim}\;\mathcal{N}_{r}(\wh{G}(\lambda_{0})),$ an immediate linear independence argument proves that $\{\Phi_{k}(\lambda_{0})\otimes u_{1},\dots,\Phi_{k}(\lambda_{0})\otimes u_{t}\}$ is a basis of $\mathcal{N}_{r}(\wh{G}(\lambda_{0})),$  and conversely.  
	\end{proof}
	
	In addition, by using Proposition \ref{righteigen}, we can recover the right eigenvectors of the transfer function $\wh{G}(\lambda)$ from the right eigenvectors of the linearization $\mathcal{L}(\lambda),$ and conversely. In particular, if $\left(\lambda_{0},\begin{bmatrix}y_0\\x_0\end{bmatrix}	\right)$ is a solution of the polynomial eigenvalue problem $\mathcal{L}(\lambda)z=0$ such that $\text{det}(\lambda_{0}I_{n}-A)\neq 0,$ then $(\lambda_{0},x_{0})$ is a solution of the rational eigenvalue problem $\wh{G}(\lambda)x=0.$ 
	
	In what follows, if we have a vector $\begin{bmatrix}y\\x\end{bmatrix},$ with $y\in\overline{\F}^{n\times 1}$ and $x\in\overline{\F}^{km\times 1},$  we will consider the vector $x$ partitioned as  $x=\begin{bmatrix}
	x^{(1)} &
	x^{(2)} &
	\cdots &
	x^{(k)}
	\end{bmatrix}^{T}$ with $ x^{(j)}\in \overline{\F}^{m\times 1}$ for $ j=1,\dots,k.$ Recall also in Theorem \ref{recoveryright1} that, as we have explained in Remark \ref{dimensiones}, if $\la_0\in\overline{\F}$ is a finite eigenvalue of $G(\la)$ then $\det(\la_0I_n-A)\neq 0.$ However, if $\la_0$ is an eigenvalue of $\mathcal{L}(\lambda),$ then, according to \cite[Theorem 6.11]{strong}, $\la_0$ might be a zero of $G(\la)$ that is simultaneously a pole and, therefore, $\det(\la_0 I_n-A) = 0,$ and $\la_0$ is not an eigenvalue of $G(\la).$ This is the reason why the condition $\det(\la_0I_n-A)\neq 0$ is assumed in parts $a)$ and $b)$ of Theorem \ref{recoveryright1}.    

	\begin{theo}\label{recoveryright1}\textbf{(Recovery of right eigenvectors from $\M_{1}$-strong linearizations)} Let $G(\lambda)\in\FF(\la)^{m\times m}$ be a rational matrix with polynomial part of degree $k\geq 2,$ and let  $$\mathcal{L}(\lambda)= \left[
		\begin{array}{c|c}
		
		X(\lambda I_{n}-A)Y& 0_{n\times (k-1)m}\quad XB\\
		\hline \phantom{\Big|}
		
		-(v\otimes I_{m})CY& L(\lambda)
		\end{array}
		\right]$$
		be an $\M_{1}$-strong linearization of $G(\lambda).$ 
		\begin{itemize}
			\item[a)] 	If $\left(\lambda_{0},\begin{bmatrix}y_0\\x_0\end{bmatrix}	\right)$ is a solution of the LEP $\mathcal{L}(\lambda)z=0$ such that $\det(\lambda_{0}I_{n}-A)\neq 0,$ then $(\lambda_{0},x_{0}^{(k)})$ is a solution of the REP $G(\lambda)x=0.$
			\item[b)] 	
			Moreover, if $\left\{\begin{bmatrix}y_1\\x_1\end{bmatrix},\ldots,\begin{bmatrix}y_t\\x_t\end{bmatrix}\right\}$ is a basis of $\mathcal{N}_{r}(\mathcal{L}(\lambda_{0})),$ with $\det(\lambda_{0}I_{n}-A)\neq 0,$ then $\{x_{1}^{(k)},\dots,x_{t}^{(k)}\}$ is a basis of $\mathcal{N}_{r}(G(\lambda_{0})).$ 
			\item[c)] 	Conversely, if $(\lambda_{0},u_{0})$ is a solution of the REP $G(\lambda)x=0,$ $x_{0}=\Phi_{k}(\lambda_{0})\otimes u_{0}$ and $y_{0}$ is defined as the unique solution of $(\lambda_{0}I_{n}-A)Yy_{0}+Bu_{0}=0,$ then $\left(\lambda_{0},\begin{bmatrix}y_0\\x_0\end{bmatrix}\right)$ is a solution of the LEP $\mathcal{L}(\lambda)z=0.$
			\item[d)]  Moreover, if $\{u_{1},\dots,u_{t}\}$ is a basis of $\mathcal{N}_{r}(G(\lambda_{0}))$ and, for $i=1,\ldots,t,$ $x_{i}=\Phi_{k}(\lambda_{0})\otimes u_{i}$ and $y_{i}$ is defined as the unique solution of $(\lambda_{0}I_{n}-A)Yy_{i}+Bu_{i}=0,$ then $\left\{\begin{bmatrix}y_1\\x_1\end{bmatrix},\ldots,\begin{bmatrix}y_t\\x_t\end{bmatrix}\right\}$ is a basis of $\mathcal{N}_{r}(\mathcal{L}(\lambda_{0})).$
		\end{itemize}

	\end{theo}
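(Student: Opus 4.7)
The plan is to view the $\M_1$-strong linearization $\mathcal{L}(\lambda)$ as a polynomial system matrix whose transfer function is $\wh{G}(\lambda)$, and then chain together two previously established correspondences: (i) Proposition \ref{righteigen}, which relates right eigenvectors of $\mathcal{L}(\lambda)$ with those of $\wh{G}(\lambda)$, and (ii) Theorem \ref{righteigenvector}, which relates right eigenvectors of $\wh{G}(\lambda)$ with those of $G(\lambda)$ via the Kronecker structure induced by Lemma \ref{lemmaright}. The four parts of the theorem arise by composing these two correspondences in the appropriate direction and then upgrading the single-vector statements to basis statements using the dimension equalities from Remark \ref{dimensiones}.

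For part $(a)$, since $\det(\lambda_0 I_n - A)\neq 0$ the block $A(\lambda_0)=X(\lambda_0 I_n-A)Y$ is invertible, so Proposition \ref{righteigen}$(a)$ yields $x_0 \in \mathcal{N}_r(\wh{G}(\lambda_0))$. Theorem \ref{righteigenvector} then forces $x_0=\Phi_k(\lambda_0)\otimes u_0$ for some $u_0\in\mathcal{N}_r(G(\lambda_0))$. Because $\phi_0(\lambda)\equiv 1$, the last $m$-block of $\Phi_k(\lambda_0)\otimes u_0$ is precisely $u_0$, giving $x_0^{(k)}=u_0\in\mathcal{N}_r(G(\lambda_0))$. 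For part $(c)$, reverse the chain: given $(\lambda_0,u_0)$ solving the REP, Theorem \ref{righteigenvector} produces $x_0=\Phi_k(\lambda_0)\otimes u_0\in\mathcal{N}_r(\wh{G}(\lambda_0))$, and then Proposition \ref{righteigen}$(c)$ lifts this to an eigenvector $[y_0;x_0]$ of $\mathcal{L}(\lambda)$ provided $y_0$ solves $X(\lambda_0 I_n-A)Y y_0 + [0\ XB]\,x_0=0$. Using the specific shape of $\mathcal{L}(\lambda)$ and $x_0^{(k)}=u_0$, this condition simplifies (after cancelling the invertible $X$) to $(\lambda_0 I_n-A)Y y_0 + B u_0 = 0$, matching the statement.

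For the basis versions $(b)$ and $(d)$, the key ingredient is the chain of equalities
\[
\dim \mathcal{N}_r(\mathcal{L}(\lambda_0)) = \dim \mathcal{N}_r(\wh{G}(\lambda_0)) = \dim \mathcal{N}_r(G(\lambda_0))
\]
established in Remark \ref{dimensiones}. In $(b)$, each basis vector $[y_i;x_i]$ of $\mathcal{N}_r(\mathcal{L}(\lambda_0))$ yields $x_i^{(k)}\in\mathcal{N}_r(G(\lambda_0))$ by $(a)$, and by the same argument $x_i = \Phi_k(\lambda_0)\otimes x_i^{(k)}$ while $y_i$ is the unique solution of $(\lambda_0 I_n-A)Yy_i+Bx_i^{(k)}=0$; hence $x_i^{(k)}\mapsto[y_i;x_i]$ is a linear bijection, so the $\{x_i^{(k)}\}$ are linearly independent and, by the dimension equality, form a basis of $\mathcal{N}_r(G(\lambda_0))$. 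Part $(d)$ is symmetric: linear independence of $\{u_i\}$ transfers to linear independence of $\{\Phi_k(\lambda_0)\otimes u_i\}$ via the last-block coordinate $\phi_0(\lambda_0)=1$, and the full eigenvectors $[y_i;x_i]$ inherit independence, after which the dimension count promotes them to a basis.

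The only real delicate point is the preservation of linear independence under the maps $[y;x]\mapsto x^{(k)}$ and $u\mapsto[y;\Phi_k(\lambda_0)\otimes u]$; this is where I expect to spend the most care, and it is handled cleanly by observing that on both sides the map is an injection between two finite-dimensional spaces of equal dimension, so injectivity plus the dimension equality of Remark \ref{dimensiones} immediately gives the basis-to-basis statements. All other steps are direct applications of Proposition \ref{righteigen}, Theorem \ref{righteigenvector}, and the specific block form of the $\M_1$-strong linearization.
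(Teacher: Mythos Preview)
Your proof plan is correct and follows essentially the same approach as the paper: apply Proposition~\ref{righteigen} to pass between eigenvectors of $\mathcal{L}(\lambda)$ and of its transfer function $\wh{G}(\lambda)$, then use Theorem~\ref{righteigenvector} (and the fact that $\phi_0(\lambda)=1$) to pass between $\wh{G}(\lambda)$ and $G(\lambda)$, with the basis statements (b) and (d) following from the dimension equalities in Remark~\ref{dimensiones} together with the linear injectivity of $u\mapsto[y;\Phi_k(\lambda_0)\otimes u]$. The paper's proof is terser but identical in substance.
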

	
	\begin{proof} By Proposition \ref{righteigen}, if $\left(\lambda_{0},\begin{bmatrix}y_0\\x_0\end{bmatrix}	\right)$ is a solution of the LEP $\mathcal{L}(\lambda)z=0$ such that $\det(\lambda_{0}I_{n}-A)\neq 0,$ then $(\lambda_{0},x_{0})$ is a solution of the REP $\wh{G}(\lambda)x=0,$ where $\wh{G}(\la)$ is the transfer function matrix of $ \mathcal{L}(\lambda).$ By Theorem \ref{righteigenvector}, $x_{0}$ has the form $x_{0}=\Phi_{k}(\lambda_{0})\otimes u$ for some $u\in \mathcal{N}_{r}(G(\lambda_{0})).$ Since $\phi_{0}(\lambda)=1$ we have that $u=x_{0}^{(k)},$ which proves $a).$ The converse $c)$ is proved analogously.
	The implications $b)$ and $d)$ are consequences of $\eqref{dim1},$ $\eqref{dim2},$ basic arguments of linear independence, and the fact that $\mathcal{L}(\lambda_0)\begin{bmatrix}y_0\\x_0\end{bmatrix}=0$ if and only if $(\la_0 I_n - A)Yy_0 + XBx_0^{(k)}=0$ and $\wh{G}(\la_0)x_0=0.$
\end{proof}

Next, we pay attention to the recovery of left eigenvectors.

\begin{theo}\label{lefteigen1} \textbf{(Recovery of left eigenvectors from $\M_{1}$-strong linearizations)}\\ Let $G(\lambda)\in\FF(\la)^{m\times m}$ be a rational matrix with polynomial part of degree $k\geq 2,$ let $$\mathcal{L}(\lambda)= \left[
	\begin{array}{c|c}
	
	X(\lambda I_{n}-A)Y& 0_{n\times (k-1)m}\quad XB\\
	\hline \phantom{\Big|} 
	
	-(v\otimes I_{m})CY& L(\lambda)
	\end{array}
	\right]$$
be an $\M_{1}$-strong linearization of $G(\lambda),$ and let $\wh{G}(\lambda)$ be the transfer function of $\mathcal{L}(\lambda).$   
\begin{itemize}
	\item[a)] If $\left(\lambda_{0},\begin{bmatrix}y_0\\x_0\end{bmatrix}\right)$ is a solution of the LEP $z^{T}\mathcal{L}(\lambda)=0$ such that $\det(\lambda_{0}I_{n}-A)\neq 0,$ then $(\lambda_{0},(v^{T}\otimes I_{m})x_{0})$ is a solution of the REP $x^{T}G(\lambda)=0.$
	\item[b)] Moreover, if $\left\{ \begin{bmatrix}y_1\\x_1\end{bmatrix},\dots,\begin{bmatrix}y_t\\x_t\end{bmatrix} \right\}$ is a basis of $\mathcal{N}_{\ell}(\mathcal{L}(\lambda_{0})),$ with $\det(\lambda_{0}I_{n}-A)\neq 0,$ then $\{(v^{T}\otimes I_{m})x_{1},\dots,(v^{T}\otimes I_{m})x_{t}\}$ is a basis of $\mathcal{N}_{\ell}(G(\lambda_{0})).$
	\item[c)] Conversely, if $(\lambda_{0},u_{0})$ is a solution of the REP $x^{T}G(\lambda)=0,$ then there exists $x_{0}\in\mathcal{N}_{\ell}(\wh{G}(\lambda_{0}))$ such that $u_{0}=(v^{T}\otimes I_{m})x_{0}$ and if $y_{0}$ is defined as the unique solution of $y_{0}^{T}X(\lambda_{0}I_{n}-A)-u_{0}^{T}C=0,$ then $\left(\lambda_{0},\begin{bmatrix}y_0\\x_0\end{bmatrix}\right)$ is a solution of the LEP $z^{T}\mathcal{L}(\lambda)=0.$
	\item[d)] Moreover, if $\{u_{1},\dots,u_{t}\}$ is a basis of $\mathcal{N}_{\ell}(G(\lambda_{0}))$ then, for $i=1,\ldots,t,$ there exists $x_{i}\in\mathcal{N}_{\ell}(\wh{G}(\lambda_{0}))$ such that $u_{i}=(v^{T}\otimes I_{m})x_{i}$ and if $y_{i}$ is defined as the unique solution of $y_{i}^{T}X(\lambda_{0}I_{n}-A)-u_{i}^{T}C=0,$ then $\left\{\begin{bmatrix}y_1\\x_1\end{bmatrix},\dots,\begin{bmatrix}y_t\\x_t\end{bmatrix}\right\}$ is a basis of $\mathcal{N}_{\ell}(\mathcal{L}(\lambda_{0})).$ 
\end{itemize}

\end{theo}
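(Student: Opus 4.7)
The plan is to combine Proposition \ref{lefteigen}, which relates left null-spaces of the polynomial system matrix $\mathcal{L}(\lambda)$ with those of its transfer function $\wh{G}(\lambda)$, with a bijection between $\mathcal{N}_{\ell}(\wh{G}(\lambda_0))$ and $\mathcal{N}_{\ell}(G(\lambda_0))$ induced by the linear map
\[
\psi \colon x \longmapsto (v^{T} \otimes I_{m})\, x.
\]
The main obstacle is to prove that $\psi$ is a bijection; once this is in hand, parts (a) and (c) will be immediate from Proposition \ref{lefteigen}(a,c) applied to $\mathcal{L}$ (the hypothesis $\det(\lambda_{0}I_{n}-A)\neq 0$ ensures that $A(\lambda_{0})=X(\lambda_{0}I_{n}-A)Y$ is invertible), while parts (b) and (d) will follow by composing this bijection with the one from Proposition \ref{lefteigen}(b,d), together with the dimension equalities of Remark \ref{dimensiones}.

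That $\psi$ maps into $\mathcal{N}_{\ell}(G(\lambda_{0}))$ is a one-line consequence of Lemma \ref{lemmaright}: right-multiplying $x_{0}^{T}\wh{G}(\lambda_{0})=0$ by $\Phi_{k}(\lambda_{0})\otimes I_{m}$ yields $x_{0}^{T}(v\otimes I_{m})G(\lambda_{0})=0$, so $\psi(x_0)\in\mathcal{N}_{\ell}(G(\lambda_{0}))$. Since $\dim \mathcal{N}_{\ell}(\wh{G}(\lambda_{0})) = \dim \mathcal{N}_{\ell}(G(\lambda_{0}))$ by Remark \ref{dimensiones}, bijectivity will follow from injectivity. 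Suppose then that $x_{0} \in \mathcal{N}_{\ell}(\wh{G}(\lambda_{0}))$ satisfies $x_{0}^{T}(v\otimes I_{m})=0$. Decomposing
\[
\wh{G}(\lambda_{0}) = (v\otimes I_{m})\,m_{\Phi}^{D}(\lambda_{0}) + H\bigl(M_{\Phi}(\lambda_{0})\otimes I_{m}\bigr) + \bigl[\,0_{km\times (k-1)m}\;\;(v\otimes I_{m})C(\lambda_{0}I_{n}-A)^{-1}B\,\bigr],
\]
the first and third summands are killed on the left by $x_{0}^{T}$, so $x_{0}^{T}\wh{G}(\lambda_{0})=0$ collapses to $x_{0}^{T}H\bigl(M_{\Phi}(\lambda_{0})\otimes I_{m}\bigr)=0$. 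Since $\alpha_{j}\neq 0$ for every $j$, $M_{\Phi}(\lambda_{0})$ has full row rank (as already observed in the proof of Lemma \ref{strongblock}), and hence so does $M_{\Phi}(\lambda_{0})\otimes I_{m}$; this forces $x_{0}^{T}H=0$. Therefore $x_{0}^{T}[\,v\otimes I_{m}\;\;H\,]=0$, and the nonsingularity of $[\,v\otimes I_{m}\;\;H\,]$ assumed in Theorem \ref{muno} gives $x_{0}=0$, proving injectivity.

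With $\psi$ bijective, part (a) follows: Proposition \ref{lefteigen}(a) provides a nonzero $x_{0}\in\mathcal{N}_{\ell}(\wh{G}(\lambda_{0}))$, and $\psi(x_{0})$ is a nonzero element of $\mathcal{N}_{\ell}(G(\lambda_{0}))$. For part (c), surjectivity of $\psi$ produces $x_{0}\in\mathcal{N}_{\ell}(\wh{G}(\lambda_{0}))$ with $(v^{T}\otimes I_{m})x_{0}=u_{0}$, and Proposition \ref{lefteigen}(c) lifts it to a left eigenvector $[y_{0}^{T}\;\;x_{0}^{T}]^{T}$ of $\mathcal{L}(\lambda_{0})$; the general lifting equation $y_{0}^{T}A(\lambda_{0})-x_{0}^{T}C(\lambda_{0})=0$, unpacked with $C(\lambda)=(v\otimes I_{m})CY$ and $x_{0}^{T}(v\otimes I_{m})=u_{0}^{T}$, simplifies after cancellation of the nonsingular factor $Y$ on the right to the stated equation $y_{0}^{T}X(\lambda_{0}I_{n}-A)-u_{0}^{T}C=0$. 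Finally, parts (b) and (d) follow by composing the bijection $\psi$ with the basis-to-basis correspondence of Proposition \ref{lefteigen}(b,d): both are bijections between spaces of equal dimension by Remark \ref{dimensiones}, so bases of one space are carried to bases of the other.
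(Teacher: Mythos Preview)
Your proof is correct and follows essentially the same approach as the paper: both hinge on showing that $x_{0}\in\mathcal{N}_{\ell}(\wh{G}(\lambda_{0}))$ with $x_{0}^{T}(v\otimes I_{m})=0$ forces $x_{0}=0$ via the nonsingularity of $[v\otimes I_{m}\;\;H]$, and then combine this with Proposition~\ref{lefteigen} and the dimension equalities of Remark~\ref{dimensiones}. Your packaging of this fact as injectivity of the map $\psi$, together with the direct appeal to the full row rank of $M_{\Phi}(\lambda_{0})\otimes I_{m}$ (already noted in Lemma~\ref{strongblock}), is slightly cleaner than the paper's block-by-block inductive argument, but the underlying logic is the same.
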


\begin{proof}
	We consider the transfer function of $\mathcal{L}(\lambda),$ $\wh{G}(\lambda)=L(\lambda)+[0_{km\times (k-1)m}\quad (v\otimes I_{m})C(\lambda I_{n}-A)^{-1}B].$ If $\left(\lambda_{0},\begin{bmatrix}y_0\\x_0\end{bmatrix}\right)$ is a solution of the LEP $z^{T}\mathcal{L}(\lambda)=0$ such that $\det(\lambda_{0}I_{n}-A)\neq 0,$ by using Proposition \ref{lefteigen} $a)$ applied to $\mathcal{L}(\lambda),$ we get
	\begin{equation}\label{equ}
	x_{0}^{T}\wh{G}(\lambda_{0})=x_{0}^{T}L(\lambda_{0})+ [0_{1\times (k-1)m}\quad x_{0}^{T}(v\otimes I_{m})C(\lambda_{0} I_{n}-A)^{-1}B]=0,
	\end{equation}
	where $x_0\neq 0$ since $(\la_0,x_0)$ is a solution of the REP $x^{T}\wh{G}(\lambda)=0$ \footnote{With the notation of Proposition \ref{lefteigen}, it is easy to see that $[y_0^{T}\; x_0^T ]P(\la_0)=0$ if and only if $y_0^T A(\la_0)-x_0^T C(\la_0)=0$ and $x_0^{T}G(\la_0)=0.$ Thus, $x_0=0$ and $\det A(\la_0)\neq 0$ imply $y_0=0.$ Therefore, any left eigenvector of $P(\la)$ corresponding to the finite eigenvalue $\la_0$ must have $x_0\neq 0.$}.
	In addition, by Lemma \ref{lemmaright}, 
		$
		x_{0}^{T}\wh{G}(\lambda_{0})(\Phi_{k}(\lambda_{0})\otimes I_{m})=x_{0}^{T}(v\otimes I_{m}) G(\lambda_{0}).
		$
	Therefore $x_{0}^{T}(v\otimes I_{m}) G(\lambda_{0})=0.$ To see that $(v^{T}\otimes I_{m})x_{0}$ is a left eigenvector of $G(\lambda_{0}),$ we only need to prove that $x_{0}^{T}(v\otimes I_{m})\neq 0.$ Let us suppose that $x_{0}^{T}(v\otimes I_{m})=0,$ and let us get a contradiction. In this case $x_{0}^{T}(v\otimes I_{m})C(\lambda_{0} I_{n}-A)^{-1}B=0$ and, therefore, $x_{0}^{T}L(\la_{0})=x_{0}^{T}[v\otimes I_{m} \quad H]F_{\Phi}^{D}(\la_{0})=0$
	  by \eqref{equ}. We call $w^{T}=x_{0}^{T}[v\otimes I_{m} \quad H]$ and we consider $w$ partitioned as $w=(w_{i})_{i=1}^{k}$ with $w_i\in\overline{\efe}^{m\times 1}.$ We have that $w_{1}^{T}=x_{0}^{T}(v\otimes I_{m})=0.$ Therefore $[0\quad w_{2}^{T}\quad \cdots \quad w_{k}^{T}]F_{\Phi}^{D}(\la_{0})=0.$ This implies $-\alpha_{k-2}w_{2}^{T}=0$ and thus $w_{2}=0,$ since $\alpha_{k-2}\neq 0.$ Therefore $[0\quad 0\quad w_{3}^{T}\quad  \cdots \quad w_{k}^{T}]F_{\Phi}^{D}(\la_{0})=0$ and $w_{3}=0.$ Proceeding in this way it is easy to prove that $w_{i}=0$ for $i=2,\cdots,k.$ Thus $x_{0}^{T}[v\otimes I_{m} \quad H]=0$ which is a contradiction because $[v\otimes I_{m} \quad H]$ is assumed to be regular and $x_0\neq 0.$  This proves $a).$ 
	  
	  The implication $b)$ is proved as follows. From part $a),$ the vectors $(v^{T}\otimes I_{m})x_{1},\dots,(v^{T}\otimes I_{m})x_{t}$ belong to $\mathcal{N}_{\ell}(G(\la_{0})).$ Therefore, as a consequence of \eqref{dimleft}, if we prove that $\{(v^{T}\otimes I_{m})x_{1},\dots,(v^{T}\otimes I_{m})x_{t}\}$ is linearly independent, then $b)$ is proved. For this purpose, let $\alpha_1,\ldots,\alpha_t\in\overline{\F}$ be arbitrary scalars such that at least one is different from zero. Thus $0\neq \begin{bmatrix}\alpha_1y_1+\cdots +\alpha_t y_t \\\alpha_1x_1+\cdots +\alpha_t x_t\end{bmatrix}\in \mathcal{N}_{\ell}(\mathcal{L}(\la_{0})), $ and, from part $a),$ $x=(v^{T}\otimes I_m)(\alpha_1x_1+\cdots +\alpha_t x_t)\neq 0$ and $x\in \mathcal{N}_{\ell}(G(\la_{0})).$ 
	  
	  For proving $c),$ we prove first that there exists a basis of $\mathcal{N}_{\ell}(G(\la_{0}))$ of the form $\{(v^{T}\otimes I_{m})x_{1},\dots,(v^{T}\otimes I_{m})x_{t}\},$ where $\{x_{1},\dots,x_{t}\}$ is a basis of $\mathcal{N}_{\ell}(\wh{G}(\la_{0})).$ To this purpose, let $\left\{\begin{bmatrix}y_1\\x_1\end{bmatrix},\dots,\begin{bmatrix}y_t\\x_t\end{bmatrix}\right\}$ be a basis of $\mathcal{N}_{\ell}(\mathcal{L}(\la_{0})).$ Then, Proposition \ref{lefteigen} $b)$ applied to $\mathcal{L}(\la)$ implies that $\{x_{1},\dots,x_{t}\}$ is a basis of $\mathcal{N}_{\ell}(\wh{G}(\la_{0}))$ and Theorem \ref{lefteigen1} $b)$ that $\{(v^{T}\otimes I_{m})x_{1},\dots,(v^{T}\otimes I_{m})x_{t}\}$ is a basis of $\mathcal{N}_{\ell}(G(\la_{0})).$ Then, if $(\la_0,u_0)$ is a solution of the REP $x^{T}G(\la)=0,$ $u_0$ can be written as $u_0=(v^{T}\otimes I_m) \displaystyle\sum_{i=1}^{t}a_{i}x_{i}$ with $a_i\in\overline{\F},$ and we define $x_0= \displaystyle\sum_{i=1}^{t}a_{i}x_{i}\in\mathcal{N}_{\ell}(\wh{G}(\la_{0})).$ Finally, Proposition \ref{lefteigen} $c)$ applied to the solution $(\la_0,x_0)$ of the REP $x^{T}\wh{G}(\la)=0$ and to $\mathcal{L}(\la),$ and the fact that $\det(\la_0I_n-A)\neq 0$ imply that if $y_0$ is the unique solution of $y_{0}^{T}X(\lambda_{0}I_{n}-A)-x_{0}^{T}(v\otimes I_m)C=0,$ which is equivalent to $y_{0}^{T}X(\lambda_{0}I_{n}-A)-u_{0}^{T}C=0,$ then $\left(\lambda_{0},\begin{bmatrix}y_0\\x_0\end{bmatrix}\right)$ is a solution of the LEP $z^{T}\mathcal{L}(\lambda)=0.$ 
	  
	  Finally, the proof of $d)$ proceeds as follows. From part $c),$ we obtain that the vectors $x_1,\ldots , x_t$ satisfying $u_i=(v^T\otimes I_m)x_i$ exist, and that the vectors $\begin{bmatrix}y_1\\x_1\end{bmatrix},\dots,\begin{bmatrix}y_t\\x_t\end{bmatrix}$ belong to $\mathcal{N}_{\ell}(\mathcal{L}(\la_{0})).$ Therefore, taking into account \eqref{dimleft}, it only remains to prove that $\begin{bmatrix}y_1\\x_1\end{bmatrix},\dots,\begin{bmatrix}y_t\\x_t\end{bmatrix}$ are linearly independent. This is easily proved by contradiction:\\ If $\left\{\begin{bmatrix}y_1\\x_1\end{bmatrix},\dots,\begin{bmatrix}y_t\\x_t\end{bmatrix}\right\}$ is linearly dependent, then $\{x_{1},\dots,x_{t}\}$ is linearly dependent, and $\{u_{1},\dots,u_{t}\}$ is linearly dependent, which is a contradiction since $\{u_{1},\dots,u_{t}\}$ is a basis.
\end{proof}

\begin{rem}\label{singularcase2} \rm Analogously to Remark \ref{singularcase1}, if $G(\la)\in\F(\la)^{m\times m}$ is singular, then the results on null-spaces proved so far in Section \ref{eigenvectorfromm1} are valid for any $\la_0\in\overline{\F}$ that satisfies $\det (\la_0 I_n - A)\neq 0.$
\end{rem}

Finally, we study the recovery of the eigenvectors corresponding to the infinite eigenvalue from $\M_1$-strong linearizations.

\begin{theo}\label{infright} \textbf{(Recovery of eigenvectors associated to infinity from $\M_{1}$-strong linearizations)} Let $G(\lambda)\in\FF(\la)^{m\times m}$ be a rational matrix with polynomial part of degree $k\geq 2,$ let $$\mathcal{L}(\lambda)= \left[
	\begin{array}{c|c}
	
	X(\lambda I_{n}-A)Y& 0_{n\times (k-1)m}\quad XB\\
	\hline \phantom{\Big|}
	
	-(v\otimes I_{m})CY& L(\lambda)
	\end{array}
	\right]$$
	be an $\M_{1}$-strong linearization of $G(\lambda),$ and let $D_{k}$ be the leading matrix coefficient of the polynomial part of $G(\la)$ as in \eqref{polypart_polybasis}. Then the following statements hold:  
	\begin{itemize}
		\item[a)] $	\mathcal{N}_r (\rev G(0))=	\mathcal{N}_r (D_{k})$ and $x_{0}\in 	\mathcal{N}_r (D_{k})$ if and only if $\begin{bmatrix}0\\e_{1}\otimes x_{0}\end{bmatrix} \in\mathcal{N}_r (\rev \mathcal{L}(0)).$ Moreover,  $\{x_1,\ldots,x_q\}$ is a basis of $\mathcal{N}_r (\rev G(0))$ if and only if $\left\{\begin{bmatrix}0\\e_{1}\otimes x_{1}\end{bmatrix},\ldots,\right.$ $\left.\begin{bmatrix}0\\e_{1}\otimes x_{q}\end{bmatrix}\right\}$ is a basis of $\mathcal{N}_r (\rev \mathcal{L}(0)).$ 
		
		\item[b)] $	\mathcal{N}_{\ell} (\rev G(0))=	\mathcal{N}_{\ell} (D_{k})$ and $\begin{bmatrix}0\\ x_{0}\end{bmatrix} \in\mathcal{N}_{\ell} (\rev \mathcal{L}(0))$ if and only if $(v^{T}\otimes I_{m})x_{0}\in 	\mathcal{N}_{\ell} (D_{k}).$ Moreover, $\left\{\begin{bmatrix}0\\ x_{1}\end{bmatrix},\:\ldots\:,\begin{bmatrix}0\\ x_{q}\end{bmatrix}\right\}$ is a basis of $\mathcal{N}_{\ell} (\rev \mathcal{L}(0))$ if and only if  $\left\{(v^{T}\otimes I_{m})x_{1},\ldots,(v^{T}\otimes I_{m})x_{q}\right\}$ is a basis of $\mathcal{N}_{\ell} (\rev G(0)).$
	\end{itemize}
		
\end{theo}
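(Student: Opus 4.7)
The plan is to establish the theorem by writing down both $\rev G(0)$ and $\rev \mathcal{L}(0)$ very explicitly, and then reading off all the claims from those descriptions together with a direct dimension count. For $\rev G(0)$, since $G(\lambda)=D(\lambda)+G_{sp}(\lambda)$ with $G_{sp}$ strictly proper and $\deg D(\lambda)=k$, the coefficient of $\lambda^k$ in $G(\lambda)$ coincides with that of $D(\lambda)$; unrolling \eqref{recu} from $\phi_0=1$ gives $\phi_k(\lambda)=(\alpha_0\alpha_1\cdots\alpha_{k-1})^{-1}\lambda^k+(\text{lower order})$, so $\rev G(0)=(\alpha_0\cdots\alpha_{k-1})^{-1}D_k$. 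Since this is a nonzero scalar multiple of $D_k$, the equalities $\mathcal{N}_r(\rev G(0))=\mathcal{N}_r(D_k)$ and $\mathcal{N}_\ell(\rev G(0))=\mathcal{N}_\ell(D_k)$ are immediate.

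For $\rev\mathcal{L}(0)$, since $\mathcal{L}(\lambda)$ is a pencil, $\rev\mathcal{L}(0)$ equals the $\lambda$-coefficient $\mathcal{L}_1$ of $\mathcal{L}(\lambda)$. Inspection of the blocks yields $\mathcal{L}_1=\diag(XY,L_1)$, where $L_1$ is the leading coefficient of $L(\lambda)=[v\otimes I_m\;\;H]\,F_\Phi^D(\lambda)$. Reading off the $\lambda$-coefficients of $m_\Phi^D$ and $M_\Phi$ from their explicit formulas yields
$$F_1=\begin{bmatrix} \dfrac{D_k}{\alpha_{k-1}} & 0\\ 0 & I_{(k-1)m}\end{bmatrix},\qquad L_1=\left[\,\dfrac{1}{\alpha_{k-1}}\,v\otimes D_k\;\;\;H\,\right].$$
Because $XY$ is invertible, both null-spaces of $\rev\mathcal{L}(0)$ consist exactly of vectors of the form $\begin{bmatrix}0\\ z\end{bmatrix}$, so the whole problem reduces to understanding $\mathcal{N}_r(L_1)$ and $\mathcal{N}_\ell(L_1)$.

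The single-vector statements then follow from a direct calculation with this explicit $L_1$. For part~(a), $L_1(e_1\otimes x_0)=\alpha_{k-1}^{-1}\,v\otimes(D_k x_0)$, which vanishes if and only if $D_k x_0=0$ (note $v\neq 0$, forced by the nonsingularity of $[v\otimes I_m\;\;H]$). For part~(b), the first block column of $L_1$ shows that the map $\phi:\mathcal{N}_\ell(L_1)\to\mathcal{N}_\ell(D_k)$, $x\mapsto(v^T\otimes I_m)x$, is well-defined; moreover, $\phi(x)=0$ together with $x^T L_1=0$ forces $x^T[v\otimes I_m\;\;H]=0$, so the nonsingularity of $[v\otimes I_m\;\;H]$ makes $\phi$ injective.

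To obtain the basis statements, I would finish with a direct dimension count. Since $L_1=[v\otimes I_m\;\;H]\,F_1$ with $[v\otimes I_m\;\;H]$ invertible, $\rank L_1=\rank F_1=\rank D_k+(k-1)m$, whence $\dim\mathcal{N}_r(L_1)=\dim\mathcal{N}_r(D_k)$ and $\dim\mathcal{N}_\ell(L_1)=\dim\mathcal{N}_\ell(D_k)$. The block-diagonal structure of $\mathcal{L}_1$ transfers these equalities to $\dim\mathcal{N}_r(\rev\mathcal{L}(0))=\dim\mathcal{N}_r(D_k)$ and $\dim\mathcal{N}_\ell(\rev\mathcal{L}(0))=\dim\mathcal{N}_\ell(D_k)$. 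With the correct dimensions in hand, the injection $x_0\mapsto\begin{bmatrix}0\\ e_1\otimes x_0\end{bmatrix}$ for part~(a) and the injection $\phi$ for part~(b) upgrade automatically to bijections, delivering both basis statements. The most delicate point, which I expect to be the main obstacle, is part~(b): ensuring full bijectivity of $\phi$ (so that the passage between bases of $\mathcal{N}_\ell(\rev\mathcal{L}(0))$ and $\mathcal{N}_\ell(D_k)$ runs in both directions) relies crucially on the global nonsingularity of $[v\otimes I_m\;\;H]$, not merely on the structural information in the first block column of $L_1$ that gives the forward direction.
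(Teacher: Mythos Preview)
Your proof is correct and in fact somewhat more elementary than the paper's. The paper proceeds by passing through the transfer function $\wh G(\lambda)$ of $\mathcal L(\lambda)$: it applies Lemma~\ref{lemmaright} at the reversal level to obtain $\rev\wh G(0)\,(\rev\Phi_k(0)\otimes I_m)=(v\otimes I_m)\,\rev G(0)$, and then invokes the unimodular equivalence \eqref{eq.unimodularoverla2} coming from the definition of strong linearization to conclude that $\dim\mathcal N_r(\rev G(0))=\dim\mathcal N_r(\rev\wh G(0))$ (and likewise on the left), finally referring back to the arguments used for finite eigenvalues. You bypass both the transfer function and the abstract strong-linearization machinery: the single observation that $L_1=[v\otimes I_m\;\;H]\,F_1$ with $[v\otimes I_m\;\;H]$ invertible gives you $\rank L_1=\rank D_k+(k-1)m$ and hence all the needed dimension equalities at once, and your direct injectivity argument for $\phi$ in part~(b) (reading $x^T H=0$ off the last $(k-1)m$ columns of $L_1$ and combining it with $x^T(v\otimes I_m)=0$) is cleaner than tracing through the finite-eigenvalue analogy. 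The paper's route has the virtue of keeping the proof parallel to the finite case and tied to the general theory of strong linearizations; yours is more self-contained and computational.
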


\begin{proof} Notice that from \eqref{efe}, $$F_{\Phi}^{D}(\lambda)= \lambda\left[\begin{array}{cc}
	\alpha_{k-1}^{-1}D_{k}&0 \\
	0 & I_{(k-1)m}
	\end{array}\right] + F_{\Phi}^{D}(0).$$ We consider $$L(\la)=[v\otimes I_{m} \quad H]F_{\Phi}^{D}(\la)=[\alpha_{k-1}^{-1}(v\otimes D_{k})\quad H]\la+L(0)=:L_{1}\la + L_{0}$$ and let $\wh{G}(\la)$ be the transfer function matrix of $\mathcal{L}(\la).$ We have that $\rev \mathcal{L}(0)= \left[
	\begin{array}{c|c}
	
	XY& 0\\
	\hline 
	
	0 & L_{1}
	\end{array}
	\right]$ and $\rev \wh{G}(0)=\rev L(0)=L_{1}.$ Moreover, $\rev G(0)=\alpha_{0}^{-1}\alpha_{1}^{-1}\cdots \alpha_{k-1}^{-1}D_{k},$ that is, the coefficient of $\la^{k}$ in $D(\la).$ Therefore, $\mathcal{N}_r (\rev G(0))=	\mathcal{N}_r (D_{k}),$ $\mathcal{N}_\ell (\rev G(0))=	\mathcal{N}_\ell (D_{k})$ and $\infty$ is an eigenvalue of $G(\la)$ if and only if $D_{k}$ is singular. In addition, every right (respectively left) eigenvector $w$ of $\rev \mathcal{L}(0)$ has the form $w=\begin{bmatrix}0\\ x_{0}\end{bmatrix}$ for some $x_{0}\in 	\mathcal{N}_r(L_{1})$ (respectively $x_{0}\in 	\mathcal{N}_\ell(L_{1})$). By Lemma \ref{lemmaright}, we have
	\begin{equation*}
		\lambda \wh{G}\left(\dfrac{1}{\lambda}\right)\left(\la^{k-1}\Phi_{k}\left(\dfrac{1}{\lambda}\right)\otimes I_{m}\right)=v\otimes \la^{k} G\left(\dfrac{1}{\lambda}\right).
	\end{equation*}
Therefore,
\begin{equation*}
	\rev \wh{G}(0)(\rev \Phi_{k}(0)\otimes I_{m})=(v\otimes I_{m}) \rev G(0).
\end{equation*} 
Since $\rev \Phi_{k}(0)= \alpha_{0}^{-1}\alpha_{1}^{-1}\cdots \alpha_{k-2}^{-1} e_{1} ,$ we obtain 
	
\begin{equation*}\label{relinf}
	\alpha_{0}^{-1}\alpha_{1}^{-1}\cdots \alpha_{k-2}^{-1}\rev \wh{G}(0)(e_{1}\otimes I_{m})=(v\otimes I_{m}) \rev G(0).
\end{equation*} 
In addition, by \eqref{eq.unimodularoverla2}, there exist unimodular matrices $W_1(\la)$ and $W_2(\la)$ such that
\begin{equation*}\label{unimodinf}
W_1(0)\diag\left(\rev G(0),I_{(k-1)m}\right)
W_2(0)=\rev \wh{G}(0),
\end{equation*}
which implies that $\dim\mathcal{N}_r (\rev G(0))=\dim\mathcal{N}_r (\rev \wh{G}(0))$ and $\dim\mathcal{N}_{\ell} (\rev G(0))=\dim\allowbreak\mathcal{N}_{\ell} (\rev \wh{G}(0)).$
Finally $a)$ and $b)$ follow from the results above by using similar arguments to the ones we used in the recovery of eigenvectors associated to finite eigenvalues.
\end{proof}

\subsection{Eigenvectors from $\M_{2}$-strong linearizations}
If we proceed analogously as we did with $\M_{1}$-strong linearizations, and we use Lemma \ref{lemmaleft}, then we get Theorems \ref{recoverym2_1}, \ref{recoverym2_2} and \ref{recoverym2_3} to recover right and left eigenvectors of a rational matrix from those of its $\M_{2}$-strong linearizations. The proofs are essentially the same as those in Section \ref{eigenvectorfromm1} by interchanging the roles of left and right eigenvectors, and they are omitted for brevity.

\begin{lem}\label{lemmaleft} Let $G(\lambda)\in\FF(\la)^{m\times m}$ be a rational matrix with polynomial part of degree $k\geq 2,$ let $$\mathcal{L}(\lambda)= \left[
	\begin{array}{c|c}
	
	X(\lambda I_{n}-A)Y& XB(w^{T}\otimes I_{m})\\
	\hline \phantom{\Big|}
	
	\begin{array}{c}
	0_{(k-1)m\times n}\\
	-CY
	\end{array}& L(\lambda)
	\end{array}
	\right]$$
	be an $\M_{2}$-strong linearization of  $G(\lambda),$ and let $\wh{G}(\lambda)$ be the transfer function of $\mathcal{L}(\lambda).$ Then 
	\begin{equation}\label{transfer2}
	(\Phi_{k}(\lambda)^{T}\otimes I_{m})\wh{G}(\lambda)=w^{T}\otimes G(\lambda).
	\end{equation}
\end{lem}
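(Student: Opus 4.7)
The plan is to mirror the proof of Lemma \ref{lemmaright} with the roles of left and right interchanged. First I would compute the transfer function matrix $\wh{G}(\lambda)$ of the polynomial system matrix $\mathcal{L}(\lambda)$ by reading off its blocks. Using the identity $(X(\lambda I_n - A)Y)^{-1} = Y^{-1}(\lambda I_n - A)^{-1}X^{-1}$, the factors $X, X^{-1}$ and $Y, Y^{-1}$ cancel, giving
$$\wh{G}(\lambda) = L(\lambda) + \begin{bmatrix} 0_{(k-1)m \times km} \\ C(\lambda I_n - A)^{-1} B(w^T \otimes I_m) \end{bmatrix},$$
where the bottom block has size $m \times km$.

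Second, I would rewrite the bottom block using the mixed-product property of the Kronecker product: since $G_{sp}(\lambda) = C(\lambda I_n - A)^{-1}B$, we have $G_{sp}(\lambda)(w^T\otimes I_m) = w^T \otimes G_{sp}(\lambda)$. Moreover, since $L(\lambda)\in \M_2(D)$ has left ansatz vector $w$ by hypothesis, the defining property of $\M_2(D)$ yields $(\Phi_k(\lambda)^T\otimes I_m)L(\lambda) = w^T \otimes D(\lambda)$, where $D(\lambda)$ is the polynomial part of $G(\lambda)$.

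Finally, I would left-multiply $\wh{G}(\lambda)$ by $\Phi_k(\lambda)^T\otimes I_m$. Because $\phi_0(\lambda) = 1$, the last block of $\Phi_k(\lambda)^T\otimes I_m$ is $I_m$, so the left-multiplication picks out precisely the bottom block of the stacked matrix, contributing $w^T\otimes G_{sp}(\lambda)$. Combined with the $L(\lambda)$ contribution, this gives
$$(\Phi_k(\lambda)^T\otimes I_m)\wh{G}(\lambda) = w^T\otimes D(\lambda) + w^T\otimes G_{sp}(\lambda) = w^T\otimes G(\lambda),$$
which is the claimed identity \eqref{transfer2}. No real obstacle is anticipated; the argument is a direct block-transpose counterpart of the proof of Lemma \ref{lemmaright} and reduces to elementary Kronecker algebra, with the only care needed being in tracking dimensions of the various block products.
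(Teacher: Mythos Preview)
Your proposal is correct and follows exactly the route the paper indicates: the paper omits the proof of Lemma~\ref{lemmaleft}, stating only that it is the transpose counterpart of Lemma~\ref{lemmaright}, and your computation of $\wh{G}(\lambda)$ followed by left-multiplication by $\Phi_k(\lambda)^T\otimes I_m$ and use of the $\M_2(D)$ ansatz relation is precisely that counterpart. The only minor point of phrasing is that left-multiplication by $\Phi_k(\lambda)^T\otimes I_m$ does not literally ``pick out'' the bottom block but rather forms a $\phi_j(\lambda)$-weighted sum of the block rows; since the top $(k-1)$ block rows of the stacked matrix are zero and $\phi_0(\lambda)=1$, the effect is exactly what you claim.
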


\begin{theo}\label{recoverym2_1} \textbf{(Recovery of right eigenvectors from $\M_{2}$-strong linearizations)} Let $G(\lambda)\in\FF(\la)^{m\times m}$ be a rational matrix with polynomial part of degree $k\geq 2,$ let  $$\mathcal{L}(\lambda)= \left[
	\begin{array}{c|c}
	
	X(\lambda I_{n}-A)Y& XB(w^{T}\otimes I_{m})\\
	\hline \phantom{\Big|}
	
		\begin{array}{c}
		0_{(k-1)m\times n}\\
		-CY
		\end{array}& L(\lambda)
	\end{array}
	\right]$$
be an $\M_{2}$-strong linearization of $G(\lambda),$ and let $\wh{G}(\lambda)$ be the transfer function of $\mathcal{L}(\lambda).$  
\begin{itemize}
	\item[a)] If $\left(\lambda_{0},\begin{bmatrix}y_0\\x_0\end{bmatrix}	\right)$ is a solution of the LEP $\mathcal{L}(\lambda)z=0$ such that $\det(\lambda_{0}I_{n}-A)\neq 0 $ then, $(\lambda_{0},(w^{T}\otimes I_{m})x_{0})$ is a solution of the REP $G(\lambda)x=0.$ 
	\item[b)] Moreover, if $\left\{\begin{bmatrix}y_1\\x_1\end{bmatrix},\ldots,\begin{bmatrix}y_t\\x_t\end{bmatrix}\right\}$ is a basis of $\mathcal{N}_{r}(\mathcal{L}(\lambda_{0})),$ with $\det(\lambda_{0}I_{n}-A)\neq 0, $ then $\{(w^{T}\otimes I_{m})x_{1},\dots,(w^{T}\otimes I_{m})x_{t}\}$ is a basis of $\mathcal{N}_{r}(G(\lambda_{0})).$
	
	\item[c)] Conversely, if $(\lambda_{0},u_{0})$ is a solution of the REP $G(\lambda)x=0,$ then there exists $x_{0}\in\mathcal{N}_{r}(\wh{G}(\lambda_{0}))$ such that $u_{0}=(w^{T}\otimes I_{m})x_{0}$ and if $y_{0}$ is defined as the unique solution of $(\lambda_{0}I_{n}-A)Yy_{0}+Bu_{0}=0,$ then $\left(\lambda_{0},\begin{bmatrix}y_0\\x_0\end{bmatrix}\right)$ is a solution of the LEP $\mathcal{L}(\lambda)z=0.$  
	\item[d)] Moreover, if $\{u_{1},\dots,u_{t}\}$ is a basis of $\mathcal{N}_{r}(G(\lambda_{0}))$ then, for $i=1,\ldots,t,$ there exists $x_{i}\in\mathcal{N}_{r}(\wh{G}(\lambda_{0}))$ such that $u_{i}=(w^{T}\otimes I_{m})x_{i}$ and if $y_{i}$ is defined as the unique solution of $(\lambda_{0}I_{n}-A)Yy_{i}+Bu_{i}=0,$ then $\left\{\begin{bmatrix}y_1\\x_1\end{bmatrix},\dots,\begin{bmatrix}y_t\\x_t\end{bmatrix}\right\}$ is a basis of $\mathcal{N}_{r}(\mathcal{L}(\lambda_{0})).$
	
\end{itemize}
	
\end{theo}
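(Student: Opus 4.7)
The plan is to mirror the proofs of Theorems \ref{recoveryright1} and \ref{lefteigen1} with the roles of right and left eigenvectors interchanged. The central tool is Lemma \ref{lemmaleft}, which, evaluated at $\lambda_0$ and combined with the Kronecker identity $(w^T \otimes G(\lambda_0))x_0 = G(\lambda_0)(w^T \otimes I_m)x_0$, links right null vectors of $\wh{G}(\lambda_0)$ to right null vectors of $G(\lambda_0)$.

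For part $a)$, I would first apply Proposition \ref{righteigen}(a) to the polynomial system matrix $\mathcal{L}(\lambda)$ of $\wh{G}(\lambda)$: since $\det(\lambda_0 I_n - A)\neq 0$, the hypothesis gives $\wh{G}(\lambda_0)x_0 = 0$ with $x_0\neq 0$ (arguing as in the footnote of Theorem \ref{lefteigen1}, with the roles swapped). Then Lemma \ref{lemmaleft} yields $G(\lambda_0)(w^T \otimes I_m)x_0 = 0$. The main obstacle is to verify that $(w^T \otimes I_m)x_0 \neq 0$. I would argue by contradiction: if $(w^T \otimes I_m)x_0 = 0$, then the strictly proper block in $\wh{G}(\lambda_0)$ contributes zero to $\wh{G}(\lambda_0)x_0$, so $L(\lambda_0)x_0 = 0$. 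Writing $L(\lambda_0) = F_{\Phi}^{D}(\lambda_0)^{\mathcal{B}} K$ with $K = \left[\begin{array}{c} w^T \otimes I_m \\ H^{\mathcal{B}}\end{array}\right]$ nonsingular, the first $m$-block of $Kx_0$ is exactly $(w^T \otimes I_m)x_0 = 0$, so the equation reduces to $(M_{\Phi}(\lambda_0)^T \otimes I_m)\, H^{\mathcal{B}} x_0 = 0$. The banded lower-triangular structure of $M_{\Phi}(\lambda_0)^T$, together with the nonvanishing subdiagonal entries $-\alpha_i \neq 0$, propagates the vanishing block-by-block and forces $H^{\mathcal{B}} x_0 = 0$. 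Hence $Kx_0 = 0$, and by the nonsingularity of $K$ we obtain $x_0 = 0$, a contradiction.

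Parts $b)$ and $d)$ will then follow from $a)$ and its converse $c)$ by combining the dimension equalities $\dim\mathcal{N}_r(G(\lambda_0)) = \dim\mathcal{N}_r(\wh{G}(\lambda_0)) = \dim\mathcal{N}_r(\mathcal{L}(\lambda_0))$ (established exactly as in Remark \ref{dimensiones}, using that $\mathcal{L}$ is a strong linearization of $G$ and applying Proposition \ref{righteigen} to $\mathcal{L}(\lambda)$ as a polynomial system matrix of $\wh{G}(\lambda)$) with a straightforward linear independence check: any nontrivial relation among the $(w^T\otimes I_m)x_i$ would, via part $a)$ applied to the corresponding combination of $\begin{bmatrix} y_i \\ x_i\end{bmatrix}$, contradict the basis property.

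For part $c)$, I would first use parts $a)$ and $b)$ to show that every basis of $\mathcal{N}_r(G(\lambda_0))$ has the form $\{(w^T \otimes I_m)x_1, \ldots, (w^T \otimes I_m)x_t\}$ for some basis $\{x_1,\ldots,x_t\}$ of $\mathcal{N}_r(\wh{G}(\lambda_0))$; thus any solution $u_0$ of the REP $G(\lambda)x=0$ can be written $u_0 = (w^T \otimes I_m)x_0$ for some $x_0 \in \mathcal{N}_r(\wh{G}(\lambda_0))$. The conclusion then follows from Proposition \ref{righteigen}(c) applied to $\mathcal{L}(\lambda)$ as a polynomial system matrix of $\wh{G}(\lambda)$: the defining relation $A(\lambda_0)y_0 + B(\lambda_0)x_0 = 0$ with $A(\lambda_0) = X(\lambda_0 I_n - A)Y$ and $B(\lambda_0)x_0 = XB(w^T\otimes I_m)x_0 = XBu_0$ translates, upon cancelling the nonsingular factor $X$, into $(\lambda_0 I_n - A)Yy_0 + Bu_0 = 0$, matching the statement exactly.
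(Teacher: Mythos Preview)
Your proposal is correct and follows essentially the same approach as the paper, which omits the proof and simply states that it mirrors those of Theorems \ref{recoveryright1} and \ref{lefteigen1} with the roles of left and right eigenvectors interchanged. Your sketch fills in precisely those mirrored details: using Lemma \ref{lemmaleft} in place of Lemma \ref{lemmaright}, the contradiction argument via the banded structure of $M_\Phi(\lambda_0)^T\otimes I_m$ (the transpose of the argument in the proof of Theorem \ref{lefteigen1}(a)), the dimension equalities from Remark \ref{dimensiones}, and Proposition \ref{righteigen} applied to $\mathcal{L}(\lambda)$ as a polynomial system matrix of $\wh{G}(\lambda)$.
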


\begin{theo}\label{recoverym2_2} \textbf{(Recovery of left eigenvectors from $\M_{2}$-strong linearizations)} Let $G(\lambda)\in\FF(\la)^{m\times m}$ be a rational matrix with polynomial part of degree $k\geq 2,$ and let  $$\mathcal{L}(\lambda)= \left[
	\begin{array}{c|c}
	
	X(\lambda I_{n}-A)Y& XB(w^{T}\otimes I_{m})\\
	\hline \phantom{\Big|}
	
	\begin{array}{c}
	0_{(k-1)m\times n}\\
	-CY
	\end{array}& L(\lambda)
	\end{array}
	\right]$$
	be an $\M_{2}$-strong linearization of $G(\lambda).$ 
	
	\begin{itemize}
		\item[a)] If $\left(\lambda_{0},\begin{bmatrix}y_0\\x_0\end{bmatrix}\right)$ is a solution of the LEP $z^{T}\mathcal{L}(\lambda)=0$ such that $\det(\lambda_{0}I_{n}-A)\neq 0,$ then $(\lambda_{0},x_{0}^{(k)})$ is a solution of the REP $x^{T}G(\lambda)=0.$

		\item[b)]  Moreover, if $\left\{\begin{bmatrix}y_1\\x_1\end{bmatrix},\dots,\begin{bmatrix}y_t\\x_t\end{bmatrix}\right\}$ is a basis of $\mathcal{N}_{\ell}(\mathcal{L}(\lambda_{0})),$ with $\det(\lambda_{0}I_{n}-A)\neq 0,$ then $\{x_{1}^{(k)},\dots,x_{t}^{(k)}\}$ is a basis of $\mathcal{N}_{\ell}(G(\lambda_{0})).$
		\item[c)] Conversely, if $(\lambda_{0},u_{0})$ is a solution of the REP $x^{T}G(\lambda)=0,$ $x_{0}=\Phi_{k}(\lambda_{0})\otimes u_{0}$ and $y_{0}$ is defined as the unique solution of $y_{0}^{T}X(\lambda_{0}I_{n}-A)-u_{0}^{T}C=0,$ then $\left(\lambda_{0},\begin{bmatrix}y_0\\x_0\end{bmatrix}\right)$ is a solution of the LEP $z^{T}\mathcal{L}(\lambda)=0.$
		\item[d)] Moreover, if $\{u_{1},\dots,u_{t}\}$ is a basis of $\mathcal{N}_{\ell}(G(\lambda_{0}))$ and, for $i=1,\ldots,t,$ $x_{i}=\Phi_{k}(\lambda_{0})\otimes u_{i}$ and $y_{i}$ is defined as the unique solution of $y_{i}^{T}X(\lambda_{0}I_{n}-A)-u_{i}^{T}C=0,$ then $\left\{\begin{bmatrix}y_1\\x_1\end{bmatrix},\ldots,\begin{bmatrix}y_t\\x_t\end{bmatrix}\right\}$ is a basis of $\mathcal{N}_{\ell}(\mathcal{L}(\lambda_{0})).$
	\end{itemize}
\end{theo}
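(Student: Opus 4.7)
The plan is to transpose the proof of Theorem \ref{lefteigen1} by swapping the roles of right and left eigenvectors throughout, so that Lemma \ref{lemmaleft} takes the place previously played by Lemma \ref{lemmaright}. The first step I would carry out is to establish an $\M_{2}$-analog of Theorem \ref{righteigenvector} for left null spaces: namely, that $u\in\mathcal{N}_{\ell}(G(\lambda_{0}))$ if and only if $\Phi_{k}(\lambda_{0})\otimes u\in\mathcal{N}_{\ell}(\wh{G}(\lambda_{0}))$, where $\wh{G}(\lambda)$ denotes the transfer function of $\mathcal{L}(\lambda)$. The direct computation
\[
(\Phi_{k}(\lambda_{0})\otimes u)^{T}\,\wh{G}(\lambda_{0})\;=\;u^{T}(\Phi_{k}(\lambda_{0})^{T}\otimes I_{m})\,\wh{G}(\lambda_{0})\;=\;w^{T}\otimes\bigl(u^{T}G(\lambda_{0})\bigr),
\]
which is an immediate consequence of Lemma \ref{lemmaleft}, yields the implication $(\Rightarrow)$; the converse then follows from the dimensional identity $\dim\mathcal{N}_{\ell}(G(\lambda_{0}))=\dim\mathcal{N}_{\ell}(\wh{G}(\lambda_{0}))$, obtained exactly as in Remark \ref{dimensiones}, together with injectivity of the map $u\mapsto\Phi_{k}(\lambda_{0})\otimes u$ (guaranteed by $\phi_{0}(\lambda)=1$). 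Because $\phi_{0}(\lambda_{0})=1$, the last block component of $\Phi_{k}(\lambda_{0})\otimes u$ is $u$ itself, which is precisely what makes the formulas in the statement read off in terms of $x_{0}^{(k)}$.

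Next, I would apply Proposition \ref{lefteigen} to $\mathcal{L}(\lambda)$ viewed as a polynomial system matrix with transfer function $\wh{G}(\lambda)$, noting that its leading block $X(\lambda I_{n}-A)Y$ is invertible at $\lambda_{0}$ since $X$ and $Y$ are nonsingular and $\det(\lambda_{0}I_{n}-A)\neq 0$. Combined with the bijection established above, Proposition \ref{lefteigen}(a) produces part (a): any left eigenvector $\left[\begin{smallmatrix}y_{0}\\ x_{0}\end{smallmatrix}\right]$ of $\mathcal{L}(\lambda_{0})$ has $x_{0}\in\mathcal{N}_{\ell}(\wh{G}(\lambda_{0}))$, from which the correspondence extracts $x_{0}^{(k)}\in\mathcal{N}_{\ell}(G(\lambda_{0}))$. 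For part (c), starting from $u_{0}\in\mathcal{N}_{\ell}(G(\lambda_{0}))$, the vector $x_{0}=\Phi_{k}(\lambda_{0})\otimes u_{0}$ lies in $\mathcal{N}_{\ell}(\wh{G}(\lambda_{0}))$ by the same correspondence, and Proposition \ref{lefteigen}(c) supplies the unique $y_{0}$ satisfying $y_{0}^{T}X(\lambda_{0}I_{n}-A)Y=x_{0}^{T}\bigl[\begin{smallmatrix}0\\ CY\end{smallmatrix}\bigr]$; using $x_{0}^{(k)}=u_{0}$ and cancelling the invertible factor $Y$, this simplifies to $y_{0}^{T}X(\lambda_{0}I_{n}-A)-u_{0}^{T}C=0$, as stated.

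Finally, parts (b) and (d) would be handled by the same dimension-plus-linear-independence pattern that closes the proof of Theorem \ref{lefteigen1}: the injective linear maps $\left[\begin{smallmatrix}y\\ x\end{smallmatrix}\right]\mapsto x^{(k)}$ (from $\mathcal{N}_{\ell}(\mathcal{L}(\lambda_{0}))$ to $\mathcal{N}_{\ell}(G(\lambda_{0}))$) and its partial inverse $u_{i}\mapsto \left[\begin{smallmatrix}y_{i}\\ \Phi_{k}(\lambda_{0})\otimes u_{i}\end{smallmatrix}\right]$, together with the equality $\dim\mathcal{N}_{\ell}(\mathcal{L}(\lambda_{0}))=\dim\mathcal{N}_{\ell}(G(\lambda_{0}))$, promote the pointwise statements to bases. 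I do not expect any serious obstacle, since the whole scheme is a mechanical transposition of the $\M_{1}$ case already carried out; the only delicate point is verifying that the map $u\leftrightarrow\Phi_{k}(\lambda_{0})\otimes u$ is a genuine bijection of null spaces, which is where the normalization $\phi_{0}=1$ and the dimensional property of strong linearizations of rational matrices intervene.
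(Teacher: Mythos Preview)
Your proposal is correct and mirrors the paper's own approach, which omits the proof and simply states that it is obtained from the $\M_1$ results in Section~\ref{eigenvectorfromm1} by interchanging the roles of left and right eigenvectors. Note, however, that the direct analog of Theorem~\ref{recoverym2_2} under this interchange is Theorem~\ref{recoveryright1} (together with Theorem~\ref{righteigenvector}), not Theorem~\ref{lefteigen1}; this is in fact precisely the scheme your argument follows, so the only inaccuracy is the reference in your opening sentence.
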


\begin{rem} \rm Analogously to Remarks \ref{singularcase1} and \ref{singularcase2}, if $G(\la)\in\F(\la)^{m\times m}$ is singular, then the results on null-spaces in Theorems \ref{recoverym2_1} and \ref{recoverym2_2} hold for any $\la_0\in\overline{\F}$ such that $\det(\lambda_{0}I_{n}-A)\neq 0.$ 
\end{rem}

\begin{theo}\label{recoverym2_3} \textbf{(Recovery of eigenvectors associated to infinity from $\M_{2}$-strong linearizations)} Let $G(\lambda)\in\FF(\la)^{m\times m}$ be a rational matrix with polynomial part of degree $k\geq 2,$ let $$\mathcal{L}(\lambda)= \left[
	\begin{array}{c|c}
	
	X(\lambda I_{n}-A)Y& XB(w^{T}\otimes I_{m})\\
	\hline \phantom{\Big|}
	
	\begin{array}{c}
	0_{(k-1)m\times n}\\
	-CY
	\end{array}& L(\lambda)
	\end{array}
	\right]$$
	be an $\M_{2}$-strong linearization of $G(\lambda),$ and let $D_{k}$ be the leading matrix coefficient of the polynomial part of $G(\la)$ as in \eqref{polypart_polybasis}. Then the following statements hold:   
	\begin{itemize}
		\item[a)]  $	\mathcal{N}_r (\rev G(0))=	\mathcal{N}_r (D_{k})$ and $\begin{bmatrix}0\\ x_{0}\end{bmatrix} \in\mathcal{N}_r (\rev \mathcal{L}(0))$ if and only if $(w ^{T}\otimes I_{m})x_{0}\in 	\mathcal{N}_r (D_{k}).$ Moreover,   $\left\{\begin{bmatrix}0\\ x_{1}\end{bmatrix},\:\ldots\:,\begin{bmatrix}0\\ x_{q}\end{bmatrix}\right\}$ is a basis of $\mathcal{N}_r (\rev \mathcal{L}(0))$ if and only if  $\left\{(w^{T}\otimes I_{m})x_{1},\ldots,(w^{T}\otimes I_{m})x_{q}\right\}$ is a basis of $\mathcal{N}_r (\rev G(0))$.
		
		\item[b)] $	\mathcal{N}_{\ell} (\rev G(0))=	\mathcal{N}_{\ell} (D_{k})$ and $x_{0}\in 	\mathcal{N}_{\ell} (D_{k})$ if and only if $\begin{bmatrix}0\\e_{1}\otimes x_{0}\end{bmatrix} \in\mathcal{N}_{\ell} (\rev \mathcal{L}(0)).$ Moreover,  $\{x_1,\ldots,x_q\}$ is a basis of $\mathcal{N}_{\ell} (\rev G(0))$ if and only if $\left\{\begin{bmatrix}0\\e_{1}\otimes x_{1}\end{bmatrix},\ldots,\right.$ $\left.\begin{bmatrix}0\\e_{1}\otimes x_{q}\end{bmatrix}\right\}$ is a basis of $\mathcal{N}_{\ell} (\rev \mathcal{L}(0)).$

	\end{itemize}
	
\end{theo}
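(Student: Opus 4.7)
The plan is to adapt the proof of Theorem \ref{infright} to the $\M_{2}$ setting, using Lemma \ref{lemmaleft} in place of Lemma \ref{lemmaright}. In the $\M_{2}$ case the roles of right and left eigenvectors at infinity are reversed: the ``structured'' infinite eigenvectors now sit on the left (part (b), where $e_{1}\otimes x_{0}$ plays a role analogous to that in Theorem \ref{infright}(a)), while the right side (part (a)) is reached through a bijection rather than through an individual structural identification. The main obstacle will be this bijectivity in part (a), which requires an extra injectivity argument not needed in the left case.

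First I would compute $\rev \mathcal{L}(0)$. Writing $F_{\Phi}^{D}(\lambda)=F_{0}+F_{1}\lambda$, a direct block inspection gives $F_{1}=\diag(\alpha_{k-1}^{-1}D_{k},I_{(k-1)m})$. Since $F_{1}$ is block diagonal, it is invariant under block transposition, so the leading coefficient of $L(\lambda)=F_{\Phi}^{D}(\lambda)^{\mathcal{B}}K$ (with $K$ the nonsingular $km\times km$ matrix appearing in Theorem \ref{mdos}) is $L_{1}=F_{1}K$. As the only $\lambda$-dependent entries of $\mathcal{L}(\lambda)$ lie in $X(\lambda I_{n}-A)Y$ and $L(\lambda)$, we obtain $\rev \mathcal{L}(0)=\diag(XY,L_{1})$ with $XY$ invertible, so the right and left null spaces of $\rev \mathcal{L}(0)$ consist exactly of the vectors $\begin{bmatrix}0\\x_{0}\end{bmatrix}$ with $x_{0}$ in the corresponding null space of $L_{1}$. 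Moreover, iterating \eqref{recu} shows that $\phi_{k}$ has leading coefficient $\alpha_{0}^{-1}\cdots\alpha_{k-1}^{-1}$, hence $\rev G(0)=\alpha_{0}^{-1}\cdots\alpha_{k-1}^{-1}D_{k}$, making the equalities $\mathcal{N}_{r}(\rev G(0))=\mathcal{N}_{r}(D_{k})$ and $\mathcal{N}_{\ell}(\rev G(0))=\mathcal{N}_{\ell}(D_{k})$ immediate.

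Next, reversing both sides of the identity $(\Phi_{k}(\lambda)^{T}\otimes I_{m})\wh{G}(\lambda)=w^{T}\otimes G(\lambda)$ from Lemma \ref{lemmaleft} (using that $\wh{G}$ has polynomial part of degree $1$ and $G$ of degree $k$) yields $(\rev\Phi_{k}(\lambda)^{T}\otimes I_{m})\rev\wh{G}(\lambda)=w^{T}\otimes\rev G(\lambda)$. Evaluating at $\lambda=0$ with $\rev\Phi_{k}(0)=\alpha_{0}^{-1}\cdots\alpha_{k-2}^{-1}e_{1}$ and $\rev\wh{G}(0)=L_{1}$ (because $\wh{G}-L$ is strictly proper) produces the bridging identity $(e_{1}^{T}\otimes I_{m})L_{1}=\alpha_{k-1}^{-1}(w^{T}\otimes D_{k})$. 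Combining it with \eqref{eq.unimodularoverla2} evaluated at $\lambda=0$ (unimodular matrices are invertible at every scalar) gives the dimensional equalities $\dim\mathcal{N}_{r}(\rev \mathcal{L}(0))=\dim\mathcal{N}_{r}(D_{k})$ and $\dim\mathcal{N}_{\ell}(\rev \mathcal{L}(0))=\dim\mathcal{N}_{\ell}(D_{k})$.

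Part (b) then follows from a direct manipulation: for any $x_{0}\in\overline{\F}^{m}$, left-multiplying the bridging identity by $x_{0}^{T}$ gives $(e_{1}\otimes x_{0})^{T}L_{1}=\alpha_{k-1}^{-1}(w^{T}\otimes x_{0}^{T}D_{k})$, which vanishes iff $x_{0}^{T}D_{k}=0$; this is the individual biconditional, and the basis statement follows by linear independence together with the dimensional equality. For part (a), the forward direction is obtained by left-multiplying $L_{1}x_{0}=0$ by $e_{1}^{T}\otimes I_{m}$ and applying the bridging identity, yielding $D_{k}(w^{T}\otimes I_{m})x_{0}=0$. The converse, together with the basis statement, amounts to showing that the map $\Psi\colon\mathcal{N}_{r}(\rev \mathcal{L}(0))\to\mathcal{N}_{r}(D_{k})$, $\begin{bmatrix}0\\x_{0}\end{bmatrix}\mapsto (w^{T}\otimes I_{m})x_{0}$, is a bijection. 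Injectivity is the subtle step: if $L_{1}x_{0}=0$ and $(w^{T}\otimes I_{m})x_{0}=0$, then by the bridging identity the top $m$ rows of $L_{1}x_{0}=0$ are automatically satisfied, so the remaining block rows enforce $H^{\mathcal{B}}x_{0}=0$; combined with $(w^{T}\otimes I_{m})x_{0}=0$ this reads $Kx_{0}=0$, whence $x_{0}=0$ by nonsingularity of $K$. Surjectivity then follows from the dimensional equality above, yielding the basis correspondence.
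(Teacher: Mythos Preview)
Your proof is correct and follows precisely the approach the paper indicates: it omits the proof of this theorem, stating only that it is obtained from the proof of Theorem~\ref{infright} ``by interchanging the roles of left and right eigenvectors'' and using Lemma~\ref{lemmaleft} in place of Lemma~\ref{lemmaright}, which is exactly what you do. Your treatment is in fact more explicit than the paper's own sketch for Theorem~\ref{infright}, in particular your injectivity argument for $\Psi$ via $L_{1}=F_{1}K$ makes concrete the step the paper covers by the phrase ``similar arguments to the ones we used in the recovery of eigenvectors associated to finite eigenvalues.''
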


\section{Symmetric realizations of symmetric rational matrices}\label{sect:realsym}
In this section and in the next one our aim is to obtain a strong linearization of a \textit{symmetric rational matrix} $G(\lambda)\in \F(\lambda)^{m\times m},$ i.e., $G(\lambda)^{T}=G(\lambda),$ that preserves its symmetric structure. We write $G(\lambda)$ as
\begin{equation}\label{decomp}
G(\la)=D(\la)+G_{sp}(\la)
\end{equation}
 with $D(\la)$ its polynomial part and $G_{sp}(\la)$ its strictly proper part. Since \eqref{decomp} is a unique decomposition we obtain the following result just by taking transposes.
 \begin{prop}
Let $G(\lambda)\in \F(\lambda)^{m\times m}$ be a symmetric rational matrix. Then the matrices $D(\lambda)$ and $G_{sp}(\lambda)$ in \eqref{decomp} are also symmetric.
 \end{prop}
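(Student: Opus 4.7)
The plan is to invoke the uniqueness of the decomposition \eqref{eq.polspdec} and stability of the class of polynomial/strictly proper matrices under transposition.

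First I would observe that if $D(\lambda)\in\F[\lambda]^{m\times m}$ is a polynomial matrix then $D(\lambda)^{T}$ is again a polynomial matrix (transposition permutes the entries, each of which remains a polynomial). Likewise, if $G_{sp}(\lambda)\in\F_{pr}(\lambda)^{m\times m}$ is strictly proper, then its transpose $G_{sp}(\lambda)^{T}$ is strictly proper as well, since the strictly proper property is an entrywise condition ($\deg(n(\lambda))<\deg(d(\lambda))$) and transposition only rearranges entries.

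Next, starting from $G(\lambda)=D(\lambda)+G_{sp}(\lambda),$ I would take transposes on both sides to obtain
\begin{equation*}
G(\lambda)^{T}=D(\lambda)^{T}+G_{sp}(\lambda)^{T}.
\end{equation*}
By the previous paragraph this is a decomposition of $G(\lambda)^{T}$ as a sum of a polynomial matrix and a strictly proper rational matrix. Since $G(\lambda)$ is symmetric, $G(\lambda)^{T}=G(\lambda),$ so both $D(\lambda)+G_{sp}(\lambda)$ and $D(\lambda)^{T}+G_{sp}(\lambda)^{T}$ are valid polynomial-plus-strictly-proper decompositions of the same rational matrix.

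Finally, I would appeal to the uniqueness of this decomposition (established immediately before \eqref{eq.polspdec} in the preliminaries via the division algorithm applied entrywise) to conclude that $D(\lambda)^{T}=D(\lambda)$ and $G_{sp}(\lambda)^{T}=G_{sp}(\lambda).$ There is no real obstacle here; the only thing to verify carefully is that transposition preserves the two classes involved in the decomposition, which is immediate from their entrywise characterizations.
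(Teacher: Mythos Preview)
Your proof is correct and follows exactly the same approach as the paper, which simply remarks that the result follows from the uniqueness of the decomposition \eqref{decomp} by taking transposes. You have spelled out the details (closure of the polynomial and strictly proper classes under transposition) that the paper leaves implicit, but the argument is identical.
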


  Proposition \ref{symmetricsp} is the main result in this section and shows that any symmetric strictly proper rational matrix admits a state-space realization that reveals transparently the symmetry. In order to state concisely Proposition \ref{symmetricsp}, we will use the following definition.
 \begin{deff}\label{symmetricrel}
 	Let $G_{sp}(\lambda)\in \F(\lambda)^{m\times m}$ be a symmetric strictly proper rational matrix and let $n=\nu(G_{sp}(\lambda))$ be the least order of $G_{sp}(\la).$ A symmetric minimal state-space realization of $G_{sp}(\lambda)$ is an expression of the form
 	$$G_{sp}(\lambda)=W(S_{1}\lambda-S_{2})^{-1}W^{T}$$
 	where $S_{1},S_{2}\in\F^{n\times n}$ are symmetric matrices with $S_{1}$ nonsingular and $W\in\F^{m\times n}.$
 \end{deff}
  We remark that the realization described in Definition \ref{symmetricrel} is equivalent to \cite[Definition 4.4]{Antoulas} for a minimal state-space realization. However, in Definition \ref{symmetricrel} we express strictly proper matrices in a form more convenient for the goals of this paper. In particular, we will see in Section \ref{sect:sym} that by combining a symmetric minimal state-space realization of the matrix $G_{sp}(\lambda)$ in \eqref{decomp} and a symmetric strong block minimal bases pencil associated to $D(\lambda),$ we can construct symmetric strong linearizations of $G(\lambda).$ The next technical lemma is used in the proof of Proposition \ref{symmetricsp}.
\begin{lem}\label{ese}
	Let $G_{sp}(\lambda)\in \F(\lambda)^{m\times m}$ be a symmetric strictly proper rational matrix and let $G_{sp}(\lambda)=C(\lambda I_{n}-A)^{-1}B$ be a minimal state-space realization of $G_{sp}(\lambda).$ Then there exists a unique nonsingular and symmetric matrix $S\in \F^{n\times n}$ such that $A^{T}=S^{-1}AS \text{ and } C^{T}=S^{-1}B.$
\end{lem}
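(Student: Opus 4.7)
The plan is to exploit the symmetry of $G_{sp}(\lambda)$ by producing a second minimal state-space realization and then invoking the uniqueness-up-to-similarity of minimal realizations. Transposing the identity $G_{sp}(\lambda)=C(\lambda I_n-A)^{-1}B$ and using $G_{sp}(\lambda)^T=G_{sp}(\lambda)$ yields
$$G_{sp}(\lambda)=B^T(\lambda I_n-A^T)^{-1}C^T,$$
so that $(A^T,C^T,B^T)$ is another state-space realization of $G_{sp}(\lambda).$ I would first verify that this realization is itself minimal: controllability of $(A^T,C^T)$ amounts to observability of $(A,C),$ and observability of $(A^T,B^T)$ amounts to controllability of $(A,B),$ both of which hold since $(A,B,C)$ is minimal. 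Hence $(A^T,C^T,B^T)$ is a minimal state-space realization of the same $n=\nu(G_{sp}(\lambda)).$

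Next, I would invoke the classical theorem on the uniqueness of minimal state-space realizations: any two minimal realizations of the same strictly proper rational matrix are related by a unique nonsingular change of basis. Applying this to the realizations $(A,B,C)$ and $(A^T,C^T,B^T),$ there is a unique nonsingular $S\in\F^{n\times n}$ satisfying
$$A^T=S^{-1}AS,\qquad C^T=S^{-1}B,\qquad B^T=CS.$$
This gives existence of an $S$ with the first two required identities (the third is a bonus that will be essential for the symmetry argument).

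To finish, I would show $S=S^T$ by appealing to uniqueness. Transposing the three identities above yields
$$A=S^T A^T S^{-T},\qquad C=B^T S^{-T},\qquad B=S^T C^T,$$
which, after rearranging, read $A^T=S^{-T}AS^T,$ $C^T=S^{-T}B,$ and $B^T=CS^T.$ Thus $S^T$ also satisfies the three defining relations, and by the uniqueness of $S$ we must have $S^T=S.$ Hence $S$ is the required symmetric nonsingular matrix, and its uniqueness is inherited directly from the uniqueness part of the minimal-realization theorem.

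The only subtle point, and the one I would be most careful about, is ensuring that the uniqueness applies to the full triple of relations simultaneously (not just the first two individually); this is what allows the transpose argument to force $S=S^T.$ Everything else is a routine consequence of transposing the given minimal realization and applying standard results on minimal realizations.
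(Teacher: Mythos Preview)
Your proposal is correct and follows essentially the same approach as the paper: transpose the given realization to obtain a second minimal realization $(A^T,C^T,B^T)$, then invoke the uniqueness-up-to-similarity of minimal realizations to produce the unique nonsingular $S$ satisfying $A^T=S^{-1}AS$, $C^T=S^{-1}B$, $B^T=CS$. The only difference lies in how $S=S^T$ is established: the paper shows directly that $S^{-1}\mathcal{C}(A,B)=\mathcal{O}(A,C)^T=S^{-T}\mathcal{C}(A,B)$ and concludes from the full row rank of $\mathcal{C}(A,B)$, while you transpose the three defining relations to see that $S^T$ satisfies them as well and then reapply uniqueness---a slightly slicker route that avoids writing out the controllability and observability matrices.
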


\begin{proof}
	As $G_{sp}(\lambda)$ is symmetric, $G_{sp}(\lambda)=B^{T}(\lambda I_{n}-A^{T})^{-1}C^{T}$ is also a minimal state-space realization of $G_{sp}(\lambda)$ since both have the same minimal order $n.$ Therefore, by \cite[Proposition 3.3.2]{real}, the realizations $(A,B,C)$ and $(A^{T},C^{T},B^{T})$ are similar and there exists a unique nonsingular matrix $S\in \F^{n\times n}$ such that
	\begin{equation}\label{similar}
	A^{T}=S^{-1}AS,\quad C^{T}=S^{-1}B,\quad B^{T}=CS.
	\end{equation}
	The fact that $(A,B,C)$ is a minimal realization of $G_{sp}(\lambda)$ is equivalent to that $(A,B)$ and $(A,C)$ are controllable and observable, respectively (see \cite[Chapter 3]{Rosen70}). That means that the controllability matrix of $(A,B)$ and the observability matrix of $(A,C),$ i.e., $$\mathcal{C}(
	A,B)=[B\quad AB\quad A^{2}B\quad\cdots\quad A^{n-1}B]\quad \text{and}\quad\mathcal{O}(A,C)=\left[\begin{array}{c}
	C\\
	CA\\
	CA^{2}\\
	\vdots\\
	CA^{n-1}
	\end{array}\right],$$ have both rank $n.$ From the equalities in \eqref{similar} it is easy to see that
	$S^{-1}\mathcal{C}(
	A,B)=\mathcal{O}(A,C)^{T},$ and $S^{-T}\mathcal{C}(
	A,B)=\mathcal{O}(A,C)^{T}.$
	As  $\mathcal{C}(
	A,B)$ has full row rank, we deduce that $S=S^{T}.$ 
\end{proof}	

	\begin{rem}\rm Notice that the system similarity matrix $S$ between the realizations in Lemma \ref{ese} is given by $S = \mathcal{O}(A,C)^{+}\mathcal{C}(A,B)^{T} = \mathcal{C}(A,B)( \mathcal{O}(A,C)^{T})^{\dagger}$
where $+$ denotes any left inverse and $\dagger$ denotes any right inverse. Notice also that these left and right inverses exist because $(A,B,C)$ is a minimal realization of $G_{sp}(\lambda)$ and that they can be taken to be the Moore--Penrose inverse. Thus $S$ can be efficiently computed when $\F=\R,\C.$
	\end{rem}
 
 \begin{prop}\label{symmetricsp} Any symmetric strictly proper rational matrix has a symmetric minimal state-space realization. 
 \end{prop}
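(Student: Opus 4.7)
The plan is to bootstrap the proposition directly from Lemma \ref{ese} by rearranging any minimal realization of $G_{sp}(\lambda)$ into the symmetric form of Definition \ref{symmetricrel}. Concretely, I would start with an arbitrary minimal state-space realization $G_{sp}(\lambda)=C(\lambda I_n - A)^{-1}B$, whose existence is guaranteed by the preliminaries.

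Next I would apply Lemma \ref{ese} to obtain the unique nonsingular symmetric matrix $S\in\F^{n\times n}$ satisfying $A^{T}=S^{-1}AS$ and $C^{T}=S^{-1}B$. The first step is to read off two useful consequences: from $C^{T}=S^{-1}B$ we get $B=SC^{T}$, and from $A^{T}S=SA^{T}\cdot S\cdot S^{-1}\cdot S^{-1}\cdot S = \ldots$ — more cleanly, multiplying $A^{T}=S^{-1}AS$ on the left by $S$ yields $SA^{T}=AS$, so $AS=(AS)^{T}$ since $S$ is symmetric, hence $AS$ is symmetric, and equivalently $S^{-1}A$ is symmetric (check: $(S^{-1}A)^{T}=A^{T}S^{-1}=S^{-1}A S\cdot S^{-1}=S^{-1}A$).

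Now I substitute: using $B=SC^{T}$, I rewrite
\[
G_{sp}(\lambda)=C(\lambda I_n-A)^{-1}SC^{T}=C\bigl[S^{-1}(\lambda I_n-A)\bigr]^{-1}C^{T}=C(\lambda S^{-1}-S^{-1}A)^{-1}C^{T}.
\]
Setting $S_1:=S^{-1}$, $S_2:=S^{-1}A$, and $W:=C\in\F^{m\times n}$ gives the desired form $G_{sp}(\lambda)=W(S_1\lambda-S_2)^{-1}W^{T}$. Symmetry of $S_1$ is immediate from symmetry and nonsingularity of $S$; symmetry of $S_2$ is exactly the computation $(S^{-1}A)^{T}=S^{-1}A$ carried out above; nonsingularity of $S_1$ follows from nonsingularity of $S$. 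This produces a symmetric minimal state-space realization (the order is still $n=\nu(G_{sp}(\lambda))$ because we have only changed coordinates multiplicatively).

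There is no serious obstacle here once Lemma \ref{ese} is in hand — the only nontrivial piece is recognizing that the similarity matrix $S$ provided by that lemma can absorb the asymmetry between $B$ and $C^{T}$ and simultaneously symmetrize $A$ after being pulled through the inverse. The construction is purely algebraic, and the whole argument is a short chain of substitutions verifying the three conditions in Definition \ref{symmetricrel}.
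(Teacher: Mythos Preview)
Your proof is correct and follows essentially the same route as the paper: start from an arbitrary minimal realization, invoke Lemma~\ref{ese} to replace $B$ by $SC^{T}$, pull $S$ through the inverse to obtain $G_{sp}(\lambda)=C(\lambda S^{-1}-S^{-1}A)^{-1}C^{T}$, and observe that $S^{-1}$ and $S^{-1}A$ are symmetric. Your write-up is in fact slightly more explicit than the paper's in verifying the symmetry of $S^{-1}A$ and checking the conditions of Definition~\ref{symmetricrel}.
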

 
\begin{proof} As said in Section \ref{prelim}, any strictly proper rational matrix $G_{sp}(\la)$ admits a minimal state-space realization, that is, $G_{sp}(\la)=C(\la I_{n} -A)^{-1}B$ \cite{Rosen70}. By Lemma \ref{ese}, there exists a unique nonsingular and symmetric matrix $S$ such that
$G_{sp}(\lambda)=C(\lambda I_{n}-A)^{-1}SC^{T}=C(\lambda S^{-1}-S^{-1}A)^{-1}C^{T},$ and $S^{-1}A$ is symmetric. 
\end{proof}

\begin{rem}\label{hankel} \rm We can construct a symmetric minimal state-space realization of a symmetric strictly proper rational matrix $G_{sp}(\lambda)\in\F(\la)^{m\times m}$ without previously considering a non-symmetric minimal state-space realization of $G_{sp}(\lambda),$ in contrast to what we have done in the proof of Proposition \ref{symmetricsp}. For this purpose we require $\efe$ not to be a field of characteristic $2.$
Let $G_{sp}(\lambda)=G_{1}\lambda^{-1}+G_{2}\lambda^{-2}+\cdots $ be the Laurent series of $G_{sp}(\lambda),$ which converges for $|\lambda|$ large enough. Let $n=\nu(G_{sp}(\la))$ be the least order of $G_{sp}(\la).$ We consider the block Hankel matrix
\begin{equation}\label{hankelmatrix}
H_{n}=\left[\begin{array}{cccc}
G_{1} & G_{2} & \cdots & G_{n} \\
G_{2} & G_{3} & \cdots & G_{n+1} \\
\vdots & \vdots & \ddots & \vdots \\
G_{n} & G_{n+1} & \cdots & G_{2n-1} 
\end{array}\right]
\end{equation}
and follow in a symmetric way the three steps of the algorithm in \cite[Section 3.4]{real} to get a symmetric minimal state-space realization from the Hankel matrix. Notice that the Hankel matrix is symmetric since $G_{sp}(\lambda)$ is symmetric, which implies $G_i=G_i^T$ for all $i\geq 1,$ and $\rank(H_{n})=n$ by \cite[Proposition 3.3.2]{real}. Therefore we can write
$$H_{n}=X\left[\begin{array}{cc}
K & 0 \\
0 & 0
\end{array}\right]X^{T}=X\left[\begin{array}{c}
K\\
0
\end{array}\right][I_{n} \quad 0]X^{T}$$
with $X$ nonsingular and $K\in\efe^{n\times n}$ diagonal (see \cite[Theorem 34.1]{duffee}). Let us denote $$\Gamma =X\left[\begin{array}{c}
K\\
0
\end{array}\right]\text{ and }\Lambda=[I_{n} \quad 0]X^{T}.$$ We have that $H_{n}=\Gamma\Lambda.$ We write 
$X= \left[\begin{array}{cc}
X_{1} & 
X_{2} 
\end{array}\right],$ where $X_{1}=  [X_{i1}]_{i=1}^{n}$
with $X_{i1}\in \F^{m\times n}$ for $i=1,\dots,n.$ Thus $$\Gamma=\left[ \begin{array}{c}
X_{11}K\\
\vdots\\
X_{n1}K
\end{array}\right]\text{ and }\Lambda=[X_{11}^{T}\quad \cdots \quad X_{n1}^{T}].$$ We define $$R=\left[\begin{array}{cccc}
G_{2} & G_{3} & \cdots & G_{n+1} \\
G_{3} & G_{4} & \cdots & G_{n+2} \\
\vdots & \vdots & \ddots & \vdots \\
G_{n+1} & G_{n+2} & \cdots & G_{2n} 
\end{array}\right] $$ and we set $ C=X_{11}K,$ $B=X_{11}^{T}$ and $A=\Gamma^{+} R \Lambda^{+},$ with $\Gamma^{+}=[K^{-1} \quad 0]X^{-1}$ and $\Lambda^{+}=X^{-T}\left[ \begin{array}{c}
I_{n}\\
0
\end{array}\right].$ Thus $A=[K^{-1} \quad 0]X^{-1}RX^{-T}\left[ \begin{array}{c}
I_{n}\\
0
\end{array}\right]$ and, by \cite[Theorem 3.4.1]{real}, $(A,B,C)$ is a minimal realization for $G_{sp}(\lambda).$ Therefore
\begin{equation*}
\begin{split}
G_{sp}(\lambda) & =X_{11}K\left(\lambda I_{n} - [K^{-1} \quad 0]X^{-1}RX^{-T}\left[ \begin{array}{c}
I_{n}\\
0
\end{array}\right]\right)^{-1}X_{11}^{T} \\ & = X_{11}\left(\lambda K^{-1} - [K^{-1} \quad 0]X^{-1}RX^{-T}\left[ \begin{array}{c}
K^{-1}\\
0
\end{array}\right]\right)^{-1}X_{11}^{T}.
\end{split}
\end{equation*}
Finally we set $W=X_{11},$ $S_{1}=K^{-1}$ and $S_{2}=[K^{-1} \quad 0]X^{-1}RX^{-T}\left[ \begin{array}{c}
K^{-1}\\
0
\end{array}\right],$ and we obtain a symmetric minimal state-space realization of $G_{sp}(\la).$

  In the particular, but very important in applications, case in which $G_{sp}(\lambda)\in\R(\lambda)^{m\times m},$ the Hankel matrix $H_n$ is a symmetric real matrix and, therefore, we can write
	$$H_{n}=P\left[\begin{array}{cc}
	K & 0 \\
	0 & 0
	\end{array}\right]P^{T}$$
	with $P$ \textit{orthogonal}, i.e., $P^{-1}=P^{T},$ and $K$ a diagonal matrix that has the eigenvalues of $H_{n}$ at the diagonal elements. In this case, let $P= \left[\begin{array}{cc}
	P_{1} & P_{2} 
	\end{array}\right],$ where $P_{1}=  [P_{i1}]_{i=1}^{n}$
	with $P_{i1}\in \F^{m\times n}$ for $i=1,\dots,n.$ Then we obtain $G_{sp}(\lambda)=P_{11}(\lambda K^{-1} - K^{-1}P_{1}^{T} R P_{1}K^{-1})^{-1}P_{11}^{T}.$
	That is, $G_{sp}(\la)$ has a symmetric minimal state-space realization $G_{sp}(\lambda)=W(\lambda S_{1}-S_{2})^{-1}W^{T}$ where $W=P_{11},$ $S_{1}=K^{-1}$ and $S_{2}=K^{-1}P_{1}^{T} R P_{1}K^{-1}.$ 
\end{rem}

From Proposition \ref{symmetricsp} and Remark \ref{hankel} we know how to write the strictly proper part $G_{sp}(\lambda)$ of a symmetric rational matrix $G(\lambda)$ as a symmetric minimal state-space realization with or without having in advance a particular non-symmetric minimal state-space realization of $G_{sp}(\lambda).$ Moreover, it is worth to emphasize that in many applications of symmetric REPs, this can be done very easily from the data of the model without any computational cost (see \cite[Section 8.3]{strong} or \cite[Section 4]{SuBai}).
 
\section{Symmetric strong linearizations for symmetric rational matrices }\label{sect:sym} 

In this section symmetric strong linearizations for symmetric rational matrices will be constructed. We start with Example \ref{symodddegree} in which we construct a symmetric strong linearization of a symmetric rational matrix when the polynomial part has odd degree. We will use Proposition \ref{symmetricsp} and a particular symmetric strong block minimal bases pencil associated to its polynomial part with sharp degree. After that, we present symmetric strong linearizations for symmetric rational matrices in which the polynomial part may have even or odd degree but the leading coefficient must be nonsingular. In order to get these results, we need to study symmetric strong linearizations in the polynomial case. 

\begin{example}\label{symodddegree}{\rm Let $G(\lambda)=D(\lambda)+G_{sp}(\lambda)\in\F(\lambda)^{m\times m}$ be a symmetric rational matrix. Consider the polynomial part $D(\lambda)$ written in terms of the monomial basis $D(\la)=D_k\la^k+D_{k-1}\la^{k-1}+\cdots+D_0\in\FF[\la]^{m\times m}$, with $k >1$ and $D_k \ne 0$, and the matrices 	\begin{equation}
		\label{eq:Lk}
		L_p(\lambda)=\begin{bmatrix}
		-1 & \lambda  \\
		& -1 & \lambda \\
		& & \ddots & \ddots \\
		& & & -1 & \lambda  \\
		\end{bmatrix}\in\mathbb{F}[\lambda]^{p\times(p+1)},
		\end{equation}
		and
		\begin{equation}
		\label{eq:Lambda}
		\Lambda_p(\lambda)^T =
		\begin{bmatrix}
		\lambda^{p} & \cdots & \lambda & 1
		\end{bmatrix} \in \FF[\lambda]^{1\times (p+1)}.
		\end{equation} A \textit{block Kronecker linearization} of $D(\la)$ is a pencil
		\begin{equation}
		\label{eq:blockKronPencil}
		\begin{array}{cl}
		L(\la) =
		\left[
		\begin{array}{c|c}
		M(\la) &L_{\eta}(\lambda)^{T}\otimes I_{m}\\\hline \phantom{\Big|}
		L_{\varepsilon}(\lambda)\otimes I_{m}&0
		\end{array}
		\right]&
		\begin{array}{l}
		\left. \vphantom{L_{\eta}^{T}(\lambda)\otimes I_{m}} \right\} {\scriptstyle (\eta+1)m}\\
		\left. \vphantom{L_{\varepsilon}(\lambda)\otimes I_{m}}\right\} {\scriptstyle \varepsilon m}
		\end{array}\\
		\hphantom{\mathcal{L}(\la)=}
		\begin{array}{cc}
		\underbrace{\hphantom{L_{\varepsilon}(\lambda)\otimes I_{n}}}_{(\varepsilon +1)m}&\underbrace{\hphantom{L_{\eta}^{T}(\lambda)\otimes I_{m}}}_{\eta m}
		\end{array}
		\end{array}
		\>
		\end{equation}
		such that $D(\la) = (\Lambda_{\eta}(\lambda)^T\otimes I_m)\, M(\lambda) \,(\Lambda_{\varepsilon}(\lambda)\otimes I_m)$ (see \cite[Definition 4.1]{BKL}). Recall that block Kronecker linearizations are particular cases of strong block minimal bases pencils \cite{BKL}. If the polynomial part $D(\lambda)$ has odd degree $k = 2 q+1$ we can consider the symmetric block Kronecker linearization in which 
		\[
		M(\la) = \begin{bmatrix} D_{2q +1} \la + D_{2q} & \\ & D_{2q -1} \la + D_{2q-2} \\ & & \ddots \\ & & & D_{1} \la + D_{0} \end{bmatrix}
		\] 
		 and $\varepsilon=\eta=q$. Proposition \ref{symmetricsp} allows us to write $G_{sp}(\la) =W(\lambda S_{1}-S_{2})^{-1}W^{T}$ with $S_{1}$ and $S_{2}$ symmetric and $S_{1}$ nonsingular. Applying \cite[Theorem 8.11]{strong} with $Y=-S_{1}X^{T}$ for any nonsingular matrix $X\in\F^{n\times n},$ $C=WS_1^{-1},$ $A=S_2S_1^{-1},$ $B=W^T,$ and $\widehat{K}_{1}=\widehat{K}_{2}=e_{q+1}^{T}\otimes I_{m},$ we obtain that the linear polynomial matrix
		\begin{equation*} 
		\mathcal{L}(\la) = \left[
		\begin{array}{c|c;{2pt/2pt}c}
		X(S_{2}-\lambda S_{1})X^{T} & \phantom{a}  0\quad XW^{T} \phantom{a} & 0 \\ \hline \phantom{\Big|}
		\begin{array}{c}
		0\\
		WX^{T}
		\end{array} \phantom{\Big|}& M(\la) & L_q(\la)^T\otimes I_{m} \\ \hdashline[2pt/2pt] \phantom{\Big|}
		0 & L_q (\la)\otimes I_{m} & 0
		\end{array}
		\right]
		\end{equation*}
		is a symmetric strong linearization of $G(\lambda).$
	}
\end{example}

\begin{rem}\rm The approach in Example \ref{symodddegree} can be extended to other symmetric strong block minimal bases pencils of the symmetric polynomial part $D(\la)$ of $G(\la)=G(\la)^T$ to construct other symmetric strong linearizations of $G(\la),$ as long as $D(\la)$ has odd-degree. See, for instance, the pencils considered in \cite{structured}. However, the linearization in Example \ref{symodddegree} is particularly simple and, in view of the results in \cite{odddegree}, we expect that it will have favourable numerical properties.  
\end{rem}

 Let $P(\lambda)\in\F[\lambda]^{m\times m}$ be a polynomial matrix of degree $k.$ A $km\times km$ pencil $L(\lambda)$ is called \textit{block-symmetric} if $L(\lambda)=L(\lambda)^{\mathcal{B}},$ where $L(\la)$ is viewed as a block partitioned pencil with $k\times k$ blocks each of them of size $m\times m.$ Notice that a pencil $L(\lambda)$ satisfies $L(\lambda)(\Phi_{k}(\lambda)\otimes I_{m})=v\otimes P(\lambda)$ for some vector $v\in\F^{k}$ if and only if $L(\lambda)^{\mathcal{B}}$ satisfies $(\Phi_{k}(\lambda)^{T}\otimes I_{m})L(\lambda)^{\mathcal{B}}=v^{T}\otimes P(\lambda).$ Thus, if $L(\lambda)\in\M_{1}(P)$ is block-symmetric, then $L(\lambda)\in\mathbb{M}_{1}(P)\cap\mathbb{M}_{2}(P).$ This intersection space was introduced in \cite{ortho}, it is called \textit{double generalized ansatz space}, and it is denoted by $$\mathbb{DM}(P)=\mathbb{M}_{1}(P)\cap\mathbb{M}_{2}(P).$$

If $\{\phi_{j}(\lambda)\}_{j=0}^{\infty}$ is the monomial basis, the space $\DM(P)$ is denoted $\mathbb{DL}(P)$ and was introduced originally in \cite{MMMM}. In \cite[Corollary 6]{ortho} it is shown that if a pencil $L(\lambda)$ belongs to $\DM(P)$ then its right and left ansatz vectors are the same, which is called simply \textit{ansatz vector}, and that $$\mathbb{DM}(P)=\{L(\lambda)\in\mathbb{M}_{1}(P):L(\lambda)=L(\lambda)^{\mathcal{B}}\}.$$

In fact, if $P(\lambda)\in\F[\lambda]^{m\times m}$ is a symmetric polynomial matrix we obtain that any pencil in $\DM(P)$ must be symmetric. This result is not in \cite{ortho}, and we state it in Theorem \ref{symmetric}. For its proof, we use Lemmas \ref{zero} and \ref{bijective}.

\begin{lem}\label{zero}
	Let $P(\lambda)\in\efe[\la]^{m\times m}$ be a polynomial matrix of degree $k\geq 2$ and let $L(\lambda)\in \mathbb{DM}(P)$ with ansatz vector $0\in \F^{k}.$ Then $L(\lambda)=0.$
\end{lem}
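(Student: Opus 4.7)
The plan is to exploit the characterization of $\M_1(P)$-pencils (the theorem quoted from \cite[Theorem 1]{ortho}) to parametrize $L(\lambda)$ by a single constant matrix, and then deduce from block-symmetry that this matrix must vanish entrywise.

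First, since $L \in \M_1(P)$ has right ansatz vector $0$, that theorem writes $L(\lambda) = [\,0 \ \ H\,]\, F_{\Phi}^{P}(\lambda) = H\,(M_{\Phi}(\lambda)\otimes I_{m})$ for some $H \in \F^{km\times(k-1)m}$. I would partition $H = [H_{il}]$ into $k\times(k-1)$ blocks of size $m\times m$ and read off the tridiagonal pattern of $M_\Phi(\lambda)$ to obtain the explicit block formula
$$L_{ij}(\lambda) = -\alpha_{k-1-j}\,H_{ij} + (\lambda - \beta_{k-j})\,H_{i,j-1} - \gamma_{k-j+1}\,H_{i,j-2},$$
with the convention $H_{i,0} = H_{i,-1} = H_{i,k} = 0$. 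The crucial structural feature is that the first block-column is forced to be constant, $L_{i1}(\lambda) = -\alpha_{k-2}\,H_{i1}$, because the two terms carrying $\lambda$ and $\gamma$ both rely on the out-of-range blocks $H_{i,0}$ and $H_{i,-1}$.

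Next, I would use block-symmetry $L_{ij} = L_{ji}$. The constancy of $L_{i1}$ transfers to the first row $L_{1j} = L_{j1}$, so the $\lambda$-coefficient of $L_{1j}$ must vanish; by the displayed formula that coefficient is exactly $H_{1,j-1}$, forcing the entire first row of $H$ to be zero for $j = 2,\ldots,k$. Substituting back gives $L_{1j}(\lambda) = -\alpha_{k-1-j}H_{1j} = 0$, and invoking block-symmetry together with $L_{j1} = -\alpha_{k-2}\,H_{j1}$ kills the first column of $H$ as well. An induction on $r = 1,\ldots,k-1$ then repeats the same argument after shifting: assuming the first $r$ rows and columns of $H$ are zero, the formula for $L_{r+1,r+1}$ collapses to a constant, and the $\lambda$-coefficient of $L_{r+1,r+1+t}$ reduces (thanks to the inductive hypothesis eliminating $H_{r+1,j}$ for $j<r+t$) to the single new unknown $H_{r+1,r+t}$, which must vanish because $L_{r+1+t,r+1}$ is constant by the same inductive hypothesis; block-symmetry then clears the $(r+1)$-th column of $H$ via $L_{j,r+1} = -\alpha_{k-2-r}\,H_{j,r+1}$. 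After $k-1$ such steps every block of $H$ is zero, hence $L(\lambda) = 0$.

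The only real obstacle is the bookkeeping in the inductive step: one must verify that the tridiagonal dependency of each $L_{ij}$ on at most three adjacent columns of $H$, combined with the inductive hypothesis on previously cleared rows and columns, always leaves precisely one new unknown in the $\lambda$-coefficient of $L_{r+1,r+1+t}$ and none at all in $L_{r+1+t,r+1}$. Once this reduction is checked, the hypothesis $\alpha_j \neq 0$ from the recurrence \eqref{recu} immediately converts the vanishing of these coefficients into the vanishing of the corresponding block of $H$, and no further machinery beyond the $\M_1(P)$-characterization is required.
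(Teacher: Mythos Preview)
Your proposal is correct and follows essentially the same approach as the paper: parametrize $L(\lambda)=[0\ \ H]F_\Phi^P(\lambda)$ via the $\M_1(P)$-characterization, then use block-symmetry together with the tridiagonal structure of $M_\Phi(\lambda)$ to kill the blocks of $H$ by induction column by column. The only cosmetic difference is that the paper first extracts from the block-symmetry of the leading coefficient that $H=\begin{bmatrix}0\\W\end{bmatrix}$ with $W$ block-symmetric and then runs the induction on the constant term $[0\ \ H]F_\Phi^P(0)$, whereas you keep the full pencil formula for $L_{ij}(\lambda)$ and read off the vanishing of $H_{r+1,j-1}$ directly from its $\lambda$-coefficient; both arguments are the same computation organized slightly differently.
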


\begin{proof} Notice that from \eqref{efe},
	\begin{equation}\label{asterisco}
F_{\Phi}^{P}(\lambda)= \lambda\left[\begin{array}{cc}
\alpha_{k-1}^{-1}P_{k}&0 \\
0 & I_{(k-1)m}
\end{array}\right] + F_{\Phi}^{P}(0),
	\end{equation}
	  where $P(\la)$ is expressed as in \eqref{polynomial}. From \cite[Corollary 6]{ortho}, $L(\lambda)$ must have the form 
	\begin{equation*} 
	L(\lambda)=[0_{km\times m}\quad H]F_{\Phi}^{P}(\lambda)  =[0\quad H]\lambda + [0 \quad H]F_{\Phi}^{P}(0) =\left[\begin{array}{c}
				0\\
				H^{\mathcal{B}}
			\end{array}\right]\lambda + F_{\Phi}^{P}(0)^{\mathcal{B}}\left[\begin{array}{c}
			0\\
			H^{\mathcal{B}}
		\end{array}\right]. 
\end{equation*}
As $L(\lambda)$ is block symmetric, $H$ must have the form $H=\left[\begin{array}{c}
0\\
W
\end{array}\right]$ where $W$ is a $(k-1)m\times (k-1)m$ block symmetric matrix. Let $W=[W_{ij}]_{i,j=1}^{k-1}$ with $W_{ij}\in\F^{m\times m}.$ Then
$$[0_{km\times m}\quad H]F_{\Phi}^{P}(0)=\left[\begin{array}{cccc}
0 & 0 & \cdots & 0\\
-\alpha_{k-2}W_{11} & * & \cdots & *\\
-\alpha_{k-2}W_{21} & * & \cdots & *\\
\vdots\\
-\alpha_{k-2}W_{(k-1)1} & * & \cdots & *
\end{array}\right].$$
Notice that $[0_{km\times m}\quad H]F_{\Phi}^{P}(0)$ is also block symmetric because of the block symmetry of $L(\lambda).$ Then, we obtain that $W_{i1}=W_{1i}=0$ with $i=1,\dots , k-1.$ Next, we proceed by induction. Let $j\in\{2,\dots,k-1\}$ and suppose that $W_{it}=W_{ti}=0$ for all $i=1,\dots,k-1$ and $t=1,\dots,j-1.$ Then, 
$$[0_{km\times m}\quad H]F_{\Phi}^{P}(0)=\bbordermatrix{& & & (j-1) \cr
		& 0 &\cdots & 0 & 0 & 0 & \cdots & 0\cr
		&	\vdots & & \vdots & \vdots & \vdots & & \vdots \cr
	(j)	&	0 &\cdots & 0 & 0 & 0 & \cdots & 0\cr
	 &	0 &\cdots & 0 &   -\alpha_{k-(j+1)}W_{jj} & * & \cdots & *\cr
	 &	0 &\cdots & 0 &  -\alpha_{k-(j+1)}W_{(j+1)j} & * &  \cdots & *\cr
	&	\vdots & & \vdots & \vdots & \vdots & & \vdots \cr
	 &	0 &\cdots & 0 & 	-\alpha_{k-(j+1)}W_{(k-1)j} & * &  \cdots & *\cr}.$$
Therefore, $W_{ij}=W_{ji}=0$ with $i=1,\dots , k-1.$ By induction, $H=0$ and $L(\lambda)=0.$\end{proof}

 Theorem 3.4 in \cite{sympoly} states that for each $v\in\F^{k}$ there is a uniquely determined pencil in $\mathbb{DL}(P)$ with ansatz vector $v.$ We show this result extended to the space $\DM(P)$ in the following lemma. 

\begin{lem}\label{bijective}
	Let $P(\lambda)\in\F[\lambda]^{m\times m}$ be a polynomial matrix of degree $k\geq 2.$ For each $v\in\F^{k}$ there is only one pencil in $\DM(P)$ with ansatz vector $v.$
\end{lem}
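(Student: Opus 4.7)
The plan is to reformulate the statement as the bijectivity of a natural linear map and then invoke Lemma \ref{zero} for the hard part. Specifically, consider the evaluation map
\[
\psi:\mathbb{DM}(P)\longrightarrow \F^{k},\qquad L(\lambda)\longmapsto v_{L},
\]
where $v_L$ is the (well-defined) ansatz vector of $L(\lambda)\in\mathbb{DM}(P)$. The existence of $v_L$ follows because $\mathbb{DM}(P)\subseteq\mathbb{M}_1(P)$, and well-definedness follows from the fact (recalled from \cite[Corollary 6]{ortho}) that pencils in $\mathbb{DM}(P)$ have coincident left and right ansatz vectors. The linearity of $\psi$ is immediate from the defining identity $L(\lambda)(\Phi_k(\lambda)\otimes I_m)=v_L\otimes P(\lambda)$: if $L_1,L_2\in\mathbb{DM}(P)$ with ansatz vectors $v_1,v_2$, then $\alpha L_1+\beta L_2$ lies in $\mathbb{DM}(P)$ (the subspace is closed under linear combinations because both $\mathbb{M}_1(P)$ and the block-symmetry condition are) and has ansatz vector $\alpha v_1+\beta v_2$.

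Injectivity of $\psi$ is precisely Lemma \ref{zero}, which guarantees that $\ker\psi=\{0\}$. Hence the lemma reduces to proving surjectivity of $\psi$. The cleanest approach is a dimension count: I would show that $\dim\mathbb{DM}(P)=k$, and then bijectivity follows from injectivity since the target $\F^k$ has the same dimension. To obtain $\dim\mathbb{DM}(P)=k$, I would use the parametrization from \cite[Theorem 1]{ortho}: every $L(\lambda)\in\mathbb{M}_1(P)$ with right ansatz vector $v$ can be written uniquely as $L(\lambda)=[v\otimes I_m\ H]\,F_{\Phi}^P(\lambda)$ for some $H\in\F^{km\times(k-1)m}$. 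Imposing the block-symmetry $L(\lambda)=L(\lambda)^{\mathcal{B}}$ produces a system of affine-linear equations in the entries of $H$, parametrized by $v\in\F^k$. The homogeneous part of this system (taking $v=0$) has only the trivial solution $H=0$, as the proof of Lemma \ref{zero} actually shows (it proceeds by inductively forcing each block column of $H$ to vanish). Therefore, for each $v\in\F^k$ the affine system has at most one solution in $H$, giving $\dim\mathbb{DM}(P)\le k$ together with the map $\psi$ being injective.

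For the reverse inequality $\dim\mathbb{DM}(P)\ge k$, the most transparent route is to exhibit, for each standard basis vector $e_j\in\F^k$, an explicit block-symmetric pencil in $\mathbb{M}_1(P)$ with ansatz vector $e_j$; by linearity and injectivity these $k$ pencils are linearly independent, forcing $\dim\mathbb{DM}(P)=k$ and concluding the proof. The main obstacle is this explicit construction. I would mimic the monomial-basis argument of \cite[Theorem 3.4]{sympoly}, replacing the shift-sum identity with the three-term recurrence \eqref{recu}: starting from $F_\Phi^P(\lambda)$ (which has ansatz vector $e_1$, so the case $j=1$ is immediate), one constructs the pencils for $e_2,\ldots,e_k$ recursively by adding suitable multiples of $P_0,\ldots,P_{k}$ in positions dictated by the coefficients $\alpha_j,\beta_j,\gamma_j$ so as to restore block-symmetry after shifting the ansatz vector. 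Since the recurrence has exactly three terms (as in the monomial case, which has one), the bookkeeping is analogous, and the resulting pencil in each case lies in $\mathbb{DM}(P)$ by construction.
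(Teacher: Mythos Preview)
Your overall strategy---define the linear map $\psi:\mathbb{DM}(P)\to\F^k$ sending a pencil to its ansatz vector, use Lemma~\ref{zero} for injectivity, and finish with a dimension count---is exactly what the paper does. The difference is that the paper obtains $\dim\mathbb{DM}(P)=k$ in one line by citing \cite[Corollary~7]{ortho}, whereas you attempt to reprove this fact from scratch.

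That attempt contains a genuine error. You write that $F_\Phi^P(\lambda)$ has ansatz vector $e_1$, ``so the case $j=1$ is immediate.'' But $F_\Phi^P(\lambda)$ is \emph{not} block-symmetric: its lower-left $(k-1)\times 1$ block column is $[-\alpha_{k-2}I_m,\,0,\ldots,0]^T$, while its upper-right $1\times(k-1)$ block row begins with $P_{k-2}-\tfrac{\gamma_{k-1}}{\alpha_{k-1}}P_k$. Hence $F_\Phi^P(\lambda)\notin\mathbb{DM}(P)$ in general, and the base case of your recursive construction fails. The remaining sketch (``adding suitable multiples of $P_0,\ldots,P_k$ in positions dictated by $\alpha_j,\beta_j,\gamma_j$'') is too vague to constitute a proof; the actual construction of a block-symmetric pencil with a prescribed ansatz vector in a general three-term basis is precisely the nontrivial content of \cite[Section~7 and Corollary~7]{ortho}, and it is not a straightforward transcription of the monomial-basis argument in \cite{sympoly}.

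In short: the injectivity half and the reduction to a dimension count are fine and match the paper; for the existence half you should either invoke \cite[Corollary~7]{ortho} directly, as the paper does, or supply a correct explicit construction---your current sketch does not do the latter.
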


\begin{proof}
	We consider the linear map $ \DM(P)\longrightarrow \F^{k}$ that associates to any pencil $L(\lambda)$ in $\DM(P)$ its ansatz vector $v\in\F^{k}.$ By Lemma \ref{zero} this map is injective and by \cite[Corollary 7]{ortho} $\text{dim}(\DM(P))=k.$ Therefore, the map is bijective.
\end{proof}

Let $P(\lambda)\in\F[\lambda]^{m\times m}$ be a symmetric polynomial matrix, and let us define the set
$$\bS(P)=\{L(\lambda)\in\M_{1}(P):L(\lambda)=L(\lambda)^{T}\}.$$
The elements in $\bS(P)$ are in $\DM(P)$ because if $L(\lambda)=[v\otimes I_{m}\quad H]F_{\Phi}^{P}(\lambda)\in \bS(P)$ then $L(\lambda)^{T}=F_{\Phi}^{P}(\lambda)^{\mathcal{B}}\left[\begin{array}{c}
v^{T}\otimes I_{m} \\
H^{T}
\end{array}\right]\in\mathbb{M}_{2}(P),$ since in the case $P(\lambda)$ is symmetric $F_{\Phi}^{P}(\lambda)^{T}=F_{\Phi}^{P}(\lambda)^{\mathcal{B}},$ and $L(\la)=L(\la)^{T}.$ Moreover, Theorem \ref{symmetric} shows that $\bS(P)$ and $\DM(P)$ are equal.

\begin{theo}\label{symmetric} Let $P(\lambda)\in\F[\lambda]^{m\times m}$ be a symmetric polynomial matrix of degree $k\geq 2.$ Then $$\mathbb{DM}(P)=\bS(P).$$
	
\end{theo}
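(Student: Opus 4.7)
The inclusion $\bS(P)\subseteq \DM(P)$ has essentially been established in the paragraph immediately preceding the theorem, so my plan is to focus on the reverse inclusion $\DM(P)\subseteq \bS(P)$. The strategy is to exploit the characterization $\DM(P)=\{L(\la)\in\M_1(P):L(\la)=L(\la)^{\mathcal{B}}\}$ together with the uniqueness statement in Lemma \ref{bijective}: for each $v\in\F^k$ there is exactly one element of $\DM(P)$ with ansatz vector $v$. If I can show that, when $P=P^T$ and $L(\la)\in\DM(P)$ has ansatz vector $v$, the transpose $L(\la)^T$ also belongs to $\DM(P)$ and has the same ansatz vector $v$, then uniqueness forces $L(\la)^T=L(\la)$, which is exactly the claim that $L(\la)\in\bS(P)$.

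First I would verify that block-symmetry is preserved under the ordinary transpose. Writing $L(\la)=\sum_{i,j=1}^{k}e_ie_j^T\otimes L_{ij}(\la)$, one has $L(\la)^{\mathcal{B}}=\sum_{i,j}e_je_i^T\otimes L_{ij}(\la)$ and $L(\la)^T=\sum_{i,j}e_je_i^T\otimes L_{ij}(\la)^T$, from which a direct computation gives $(L(\la)^T)^{\mathcal{B}}=\sum_{i,j}e_ie_j^T\otimes L_{ij}(\la)^T=(L(\la)^{\mathcal{B}})^T$. Hence $L(\la)=L(\la)^{\mathcal{B}}$ implies $L(\la)^T=(L(\la)^T)^{\mathcal{B}}$, so $L(\la)^T$ is still block-symmetric.

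Next I would check that $L(\la)^T\in\M_1(P)$ with the same right ansatz vector $v$. Starting from $L(\la)(\Phi_k(\la)\otimes I_m)=v\otimes P(\la)$, transposing both sides and using $P(\la)=P(\la)^T$ yields $(\Phi_k(\la)^T\otimes I_m)L(\la)^T=v^T\otimes P(\la)$, i.e., $L(\la)^T\in\M_2(P)$ with left ansatz vector $v$. Now I invoke the equivalence, recalled in the paragraph before the theorem, that $N(\la)\in\M_1(P)$ with right ansatz $v$ if and only if $N(\la)^{\mathcal{B}}\in\M_2(P)$ with left ansatz $v$. Applying this to $N(\la):=L(\la)^T$ and using block-symmetry of $L(\la)^T$ (so that $(L(\la)^T)^{\mathcal{B}}=L(\la)^T$), I conclude $L(\la)^T\in\M_1(P)$ with right ansatz $v$. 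Combined with block-symmetry, this places $L(\la)^T$ in $\DM(P)$ with ansatz vector $v$.

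Finally, by Lemma \ref{bijective} the element of $\DM(P)$ with ansatz vector $v$ is unique, so $L(\la)^T=L(\la)$ and therefore $L(\la)\in\bS(P)$, which completes the proof. The only delicate point is keeping the two notions of transpose ($\,^T$ and $\,^{\mathcal{B}}$) carefully distinguished while applying the $\M_1\leftrightarrow\M_2$ duality; once that bookkeeping is clean, the argument is short and the uniqueness lemma does the work.
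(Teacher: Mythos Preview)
Your proof is correct and follows essentially the same approach as the paper: both arguments show that for $L(\la)\in\DM(P)$ the transpose $L(\la)^T$ again lies in $\DM(P)$ with the same ansatz vector, and then invoke the uniqueness in Lemma~\ref{bijective} to conclude $L(\la)=L(\la)^T$. The only cosmetic difference is that the paper verifies $L(\la)^T\in\DM(P)$ via the explicit representation $L(\la)=[v\otimes I_m\ \ H]F_\Phi^P(\la)$ together with $F_\Phi^P(\la)^T=F_\Phi^P(\la)^{\mathcal{B}}$, whereas you argue more abstractly through the identity $(L(\la)^T)^{\mathcal{B}}=(L(\la)^{\mathcal{B}})^T$ and the $\M_1\leftrightarrow\M_2$ duality.
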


\begin{proof} We have already seen that $\bS(P)\subseteq \DM(P).$ To see the other inclusion we only have to use Lemma \ref{bijective} and \cite[Corollary 6]{ortho}, and notice that if $L(\lambda)\in\DM(P)$ with $P(\la)$ symmetric then 

$$L(\lambda)=[v\otimes I_{m}\quad H]F_{\Phi}^{P}(\lambda)=F_{\Phi}^{P}(\lambda)^{\mathcal{B}}\left[\begin{array}{c}
v^{T}\otimes I_{m} \\
H^{\mathcal{B}}
\end{array}\right],$$
and
$$L(\lambda)^{T}=[v\otimes I_{m}\quad (H^{\mathcal{B}})^{T}]F_{\Phi}^{P}(\lambda)=F_{\Phi}^{P}(\lambda)^{\mathcal{B}}\left[\begin{array}{c}
v^{T}\otimes I_{m} \\
H^{T}
\end{array}\right],$$
which implies that $L(\la)^{T}\in\DM(P)$ and that $L(\la)$ and $L(\la)^{T}$ have the same ansatz vector. So, by Lemma \ref{bijective}, $L(\lambda)=L(\lambda)^{T}$ and $L(\lambda)\in\bS(P).$ 
\end{proof}
Therefore, if $P(\lambda)$ is a symmetric polynomial matrix all the pencils in $\DM(P)$ are also symmetric. In order to find linearizations in $\DM(P)$ we have to consider only regular polynomials $P(\lambda)$ because by \cite[Theorem 7]{ortho} if $P(\lambda)
$ is a singular polynomial matrix then none of the pencils in $\DM(P)$ is a linearization for $P(\lambda).$\\

In Theorem \ref{strongsymmetric}, we construct symmetric strong linearizations for a symmetric rational matrix from a particular symmetric strong linearization of its polynomial part $D(\lambda)$ when the leading coefficient $D_{k}$ of $D(\lambda)$ is nonsingular. This particular strong linearization is the pencil in $\mathbb{DM}(D)$ with ansatz vector $e_{k},$ i.e., the last vector in the canonical basis of $\efe^{k}.$ Some properties of this pencil are studied in Lemma \ref{leading}.

\begin{lem}\label{leading}
	Let $D(\lambda)\in\F[\lambda]^{m\times m}$ be a polynomial matrix with degree $k\geq 2.$ Let $L(\lambda)=[e_{k}\otimes I_{m}\quad H]F_{\Phi}^{D}(\la)\in \DM(D).$ Then $[e_{k}\otimes I_{m}\quad H]$ is nonsingular if and only if the leading matrix coefficient $D_{k}$ of $D(\lambda)$ is nonsingular. Moreover, if $D(\la)$ is regular, $L(\la)$ is a strong linearization of $D(\lambda)$ if and only if $D_{k}$ is nonsingular.
\end{lem}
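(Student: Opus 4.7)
The plan is to exploit the block-symmetry forced on $L(\la)$ by the condition $L(\la)\in\DM(D)$ in order to determine the structure of $Q:=[e_k\otimes I_m\quad H]$. Since $L(\la)\in\DM(D)$, the pencil $L(\la)=QF_{\Phi}^{D}(\la)$ satisfies $L(\la)=L(\la)^{\mathcal{B}}$ by \cite[Corollary 6]{ortho}. Writing $Q$ as a $k\times k$ block matrix of $m\times m$ blocks (so that $Q_{i,1}=0$ for $i<k$ and $Q_{k,1}=I_m$, as the first block column of $Q$ equals $e_k\otimes I_m$), and using the explicit form of $F_\Phi^{D}(\la)$ in \eqref{efe}, I would compute each block $L_{ij}(\la)=\sum_{\ell}Q_{i\ell}\,[F_\Phi^D(\la)]_{\ell j}$ and impose $L_{ij}=L_{ji}$ for every pair $i<j$.

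I expect these equations to force $Q$ to have a \emph{block anti-lower-triangular} form, namely $Q_{ij}=0$ whenever $i+j\le k$, and $Q_{i,k+1-i}=c_i\,D_k$ for $i=1,\dots,k-1$, where $c_1,\dots,c_{k-1}$ are nonzero constants depending only on the recurrence parameters $\alpha_0,\dots,\alpha_{k-1}$. The natural way to prove this is by induction on the antidiagonal index $r=i+j$, going from $r=2$ up to $r=k+1$: along each antidiagonal $i+j=r$, matching simultaneously the $\la$-coefficients and the constant terms of $L_{ij}=L_{ji}$ determines the blocks to be zero (if $r\le k$) or, at $r=k+1$, to be explicit nonzero scalar multiples of $D_k$. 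The case $k=3$, carried out directly, already exhibits the pattern $Q_{1,3}=\alpha_{k-1}^{-1}D_k$ and $Q_{2,2}=\alpha_0\,\alpha_{k-1}^{-1}\alpha_{k-2}^{-1}\,D_k$, and the general case is a routine but careful induction.

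Once the antidiagonal structure of $Q$ is in hand, the first assertion is immediate: reversing the order of the $k$ block columns turns $Q$ into a block lower triangular matrix with diagonal blocks $c_1D_k,\dots,c_{k-1}D_k,I_m$, so $\det Q$ equals, up to a fixed sign, $c_1^m\cdots c_{k-1}^m\,(\det D_k)^{k-1}$. Since each $c_i\neq 0$, this shows that $Q$ is nonsingular if and only if $D_k$ is nonsingular.

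For the second assertion, assume $D(\la)$ is regular. If $D_k$ is nonsingular, then $Q$ is nonsingular by the first part, and Corollary \ref{ele} yields that $L(\la)$ is a strong linearization of $D(\la)$. Conversely, suppose $D_k$ is singular. Then $\det Q=0$, and since $F_\Phi^D(\la)$ is a strong linearization of the regular matrix $D(\la)$ by Lemma \ref{strongblock}, $F_\Phi^D(\la)$ is itself regular, so $\det F_\Phi^D(\la)\not\equiv 0$. Hence $\det L(\la)=\det Q\cdot\det F_\Phi^D(\la)\equiv 0$, which forces $L(\la)$ to be a singular pencil. A singular pencil cannot be a strong linearization of the regular matrix $D(\la)$ (the spectral characterization in \cite[Theorem 6.11]{strong} requires rank matching), and the claim follows. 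The main obstacle is the inductive bookkeeping required to establish the antidiagonal structure of $Q$ for general $k$, since the three-term recurrence \eqref{recu} introduces the parameters $\beta_j$ and $\gamma_j$ and extra off-diagonals in $M_\Phi(\la)$ that must be tracked carefully when matching $\la$-coefficients and constant terms.
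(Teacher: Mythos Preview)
Your proposal is correct and follows essentially the same approach as the paper: both establish, via the block-symmetry forced by $L(\la)\in\DM(D)$ and an induction along the block antidiagonals, that $Q=[e_k\otimes I_m\ \ H]$ is block anti-triangular with antidiagonal blocks equal to nonzero scalar multiples of $D_k$ (and $I_m$ in the last block row), from which the first assertion is immediate. For the second assertion the paper simply invokes \cite[Theorem 3]{ortho} (which gives directly that, for regular $D(\la)$, $L(\la)$ is a strong linearization iff $Q$ is nonsingular), whereas you argue the converse direction by showing $L(\la)$ is a singular pencil when $\det Q=0$; both arguments are valid and amount to the same thing.
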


\begin{proof}
	Let $L(\lambda)=[e_{k}\otimes I_{m}\quad H]F_{\Phi}^{D}(\la)\in\DM(D).$ We write, by using \eqref{asterisco} and \cite[Corollary 6]{ortho}, 	
	\begin{equation*}
	\begin{split}
	L(\lambda)=[e_{k}\otimes I_{m}\quad H]F_{\Phi}^{D}(\lambda) & =[e_{k}\otimes \alpha_{k-1}^{-1}D_{k}\quad H]\lambda + [e_{k}\otimes I_{m} \quad H]F_{\Phi}^{D}(0)\\
	& =\left[\begin{array}{c}
	e_{k}^{T}\otimes \alpha_{k-1}^{-1}D_{k}\\
	H^{\mathcal{B}}
	\end{array}\right]\lambda + F_{\Phi}^{D}(0)^{\mathcal{B}}\left[\begin{array}{c}
	e_{k}^{T}\otimes I_{m}\\
	H^{\mathcal{B}}
	\end{array}\right].
	\end{split}
	\end{equation*}
	Then $H=\left[\begin{array}{c}
	0_{m\times (k-2)m} \quad \alpha_{k-1}^{-1}D_{k}\\
	\begin{array}{c}
	H^{'}
	\end{array}
	\end{array}\right]$ for some $(k-1)m\times (k-1)m$ block symmetric matrix $H^{'}.$ Let $H^{'}=[H_{ij}^{'}]_{i,j=1}^{k-1}$ with $H_{ij}^{'}\in\F^{m\times m}.$ If we calculate the first block row and block column of the product $[e_{k}\otimes I_{m} \quad H]F_{\Phi}^{D}(0)$ we obtain 
	$$\left[\begin{array}{cccccc}
	0 & 0 & \cdots & 0 & -\frac{\alpha_{0}}{\alpha_{k-1}}D_{k} & -\frac{\beta_{0}}{\alpha_{k-1}}D_{k}\\
	-\alpha_{k-2}H_{11}^{'} & * & \cdots & * & *& *\\
	-\alpha_{k-2}H_{21}^{'} & * & \cdots & * & *& *\\
	\vdots\\
	-\alpha_{k-2}H_{(k-2)1}^{'} & * & \cdots & *& * & *\\
	-\frac{\beta_{k-1}}{\alpha_{k-1}}D_{k}+D_{k-1}-\alpha_{k-2}H_{(k-1)1}^{'} & * & \cdots & *& * & *
	\end{array}\right].$$
	Since $[e_{k}\otimes I_{m} \quad H]F_{\Phi}^{D}(0)$ is block symmetric we obtain
	
	\begin{equation}\label{zeroblock}
	H_{1i}^{'}=H_{i1}^{'}=0 \text{ for } i=1,\dots,k-3 
	\end{equation} 
	and $$-\alpha_{k-2}H_{(k-2)1}^{'}=-\frac{\alpha_{0}}{\alpha_{k-1}}D_{k}.$$
	Thus,
	\begin{equation}\label{h}
	H_{(k-2)1}^{'}=H_{1(k-2)}^{'}=\dfrac{\alpha_{0}}{\alpha_{k-1}\alpha_{k-2}}D_{k}.
	\end{equation}
	Using \eqref{zeroblock} and \eqref{h} and calculating the second block row and block column of the product $[e_{k}\otimes I_{m} \quad H]F_{\Phi}^{D}(0)$ as before, we obtain
	\begin{equation*}
	H_{2i}^{'}=H_{i2}^{'}=0 \text{ for } i=1,\dots,k-4 
	\end{equation*} 
	and $$-\alpha_{k-3}H_{(k-3)2}^{'}=-\alpha_{1}H_{1(k-2)}^{'}.$$
	Thus,
	\begin{equation*}
	H_{(k-3)2}^{'}=H_{2(k-3)}^{'}=\dfrac{\alpha_{0}\alpha_{1}}{\alpha_{k-1}\alpha_{k-2}\alpha_{k-3}}D_{k}.
	\end{equation*}
	In general, an induction argument proves that
	\begin{equation*}
	H_{(k-j)i}^{'}=H_{i(k-j)}^{'}=\dfrac{\alpha_{0}\alpha_{1}\cdots\alpha_{i-1}}{\alpha_{k-1}\alpha_{k-2}\cdots\alpha_{k-j}}D_{k}\text{ for }j-i=1,
	\end{equation*}
	and the matrix $[e_{k}\otimes I_{m}\quad H]$ has the following block anti-triangular form

	$$\left[\begin{array}{ccccccc}
	0 & 0  &\cdots & 0 & 0 & 0 & \alpha_{k-1}^{-1}D_{k} \\
	0 & 0 & \cdots & 0 & 0 &\dfrac{\alpha_{0}}{\alpha_{k-1}\alpha_{k-2}}D_{k} & *\\
	0 & 0 & \cdots  & 0 &\dfrac{\alpha_{0}\alpha_{1}}{\alpha_{k-1}\alpha_{k-2}\alpha_{k-3}}D_{k} & * &* \\
	\vdots & & &  \iddots & &  \\
	0 & 0 & \dfrac{\alpha_{0}\alpha_{1}}{\alpha_{k-1}\alpha_{k-2}\alpha_{k-3}}D_{k} & * & * & * & * \\
	0 & \dfrac{\alpha_{0}}{\alpha_{k-1}\alpha_{k-2}}D_{k} & * & * &* & * &*\\
	I_{m} & * & * &* & * & * & *
	\end{array}\right]
	. $$
	Therefore $[e_{k}\otimes I_{m}\quad H]$ is nonsingular if and only if $D_{k}$ is nonsingular. Moreover, if $D(\la)$ is regular, $[e_{k}\otimes I_{m}\quad H]$ is nonsingular if and only if $L(\la)$ is a strong linearization of $D(\lambda)$ by \cite[Theorem 3]{ortho}.
\end{proof}

Computing the pencil in $\DM(P)$ with ansatz vector $e_k,$ or with any other ansatz vector $v,$ may be difficult. In general, one can follow the procedure in \cite[Section 7]{ortho} or use the MATLAB code in \cite[Subsection 7.1]{bivariate}. However, if the recurrence relation \eqref{recu} is simple and $k$ is low, then the computation can be performed easily by hand, as we illustrate in Example \ref{chevyshev}. 

\begin{example}\label{chevyshev} \rm
	For a second degree polynomial matrix $D(\lambda)=D_{2}\phi_{2}(\lambda)+D_{1}\phi_{1}(\lambda)+D_{0}\phi_{0}(\lambda)$ expressed in terms of a polynomial basis satisfying \eqref{recu}, the pencil $L(\lambda)\in \mathbb{DM}(D)$ with ansatz vector $e_{2}$ is

	\begin{equation*}
	L(\lambda)=\left[\begin{array}{cc}
	-\frac{\alpha_{0}}{\alpha_{1}}D_{2} & \frac{\lambda-\beta_{0}}{\alpha_{1}}D_{2}\\
	\frac{\lambda-\beta_{0}}{\alpha_{1}}D_{2} & \left(\frac{\beta_{0}-\beta_{1}}{\alpha_{0}\alpha_{1}}(\la-\beta_0)-\frac{\gamma_{1}}{\alpha_1}\right)D_{2}+\frac{\lambda-\beta_{0}}{\alpha_{0}}D_{1}+D_{0}
	\end{array}\right].
	\end{equation*}
	This can be obtained, for instance, by computing the matrix $H^{'}$ as in the proof of Lemma \ref{leading}. For example, Chebyshev polynomials of the first kind $\{\phi_{j}(\lambda)\}_{j=0}^{\infty}$ satisfy the following three-term recurrence relation:
	
	\begin{equation}
	\frac{1}{2}\phi_{j+1}(\lambda)=\lambda\phi_{j}(\lambda)-\frac{1}{2}\phi_{j-1}(\lambda) \quad j\geq 1
	\end{equation}
	\\
	where $\phi_{-1}(\lambda)=0,$ $\phi_{0}(\lambda)=1$ and $\phi_{1}(\lambda)=\lambda.$ Therefore, $\alpha_{0}=1,$ $\alpha_{j}=\gamma_{j}=\frac{1}{2}$ for $j\geq 1,$ $\beta_{j}=0$  $j\geq 0$ and 
	\begin{equation*}
	L(\lambda)=\left[\begin{array}{cc}
	-2D_{2} & 2\lambda D_{2}\\
	2\lambda D_{2} & \lambda D_{1} + D_{0}-D_{2} 
	\end{array}\right].
	\end{equation*}
	Chebyshev polynomials of the second kind satisfy the same recurrence relation with $\phi_{1}(\lambda)=2\lambda.$ Thus, $\alpha_{j}=\gamma_{j}=\frac{1}{2},$ $\beta_{j}=0$ for $j\geq 0$ and
	\begin{equation*}
	L(\lambda)=\left[\begin{array}{cc}
	-D_{2} & 2\lambda D_{2}\\
	2\lambda D_{2} & 2\lambda D_{1} + D_{0}-D_{2}
	\end{array}\right].
	\end{equation*}
	For a cubic polynomial matrix $D(\lambda)=D_{3}\phi_{3}(\lambda)+D_{2}\phi_{2}(\lambda)+D_{1}\phi_{1}(\lambda)+D_{0}\phi_{0}(\lambda)$ expressed in terms of Chebyshev polynomials of the first kind, the pencil $L(\lambda)\in \mathbb{DM}(D)$ with ansatz vector $e_{3}$ is
	
	\begin{equation*}
	L(\lambda)=\left[\begin{array}{ccc}
	0 & -2D_{3} & 2\lambda D_{3}\\
	-2D_{3} & 4\lambda D_{3} -2 D_{2} & 2\lambda D_{2}-2 D_{3}\\
	2\lambda D_{3} & 2\lambda D_{2}-2D_{3} & \lambda(D_{1}+D_{3}) + D_{0}-D_{2}
	\end{array}\right]. 
	\end{equation*}
	If $D(\lambda)$ is expressed in terms of Chebyshev polynomials of the second kind we obtain 
	\begin{equation*}
	L(\lambda)=\left[\begin{array}{ccc}
	0 & - D_{3} & 2\lambda D_{3}\\
	-D_{3} & 2\lambda D_{3} - D_{2} & 2\lambda D_{2}- D_{3}\\
	2\lambda D_{3} & 2\lambda D_{2}-D_{3} & 2\lambda D_{1} + D_{0}-D_{2}
	\end{array}\right]. 
	\end{equation*}

\end{example}

By using Theorems \ref{muno} or \ref{mdos}, Theorem \ref{symmetric}, Lemma \ref{leading} and Proposition \ref{symmetricsp}, we obtain in Theorem \ref{strongsymmetric} symmetric strong linearizations of a symmetric rational matrix when the leading coefficient of its polynomial part is nonsingular as we announced.

\begin{theo}\label{strongsymmetric}
 Let $G(\lambda)\in \F(\lambda)^{m\times m}$ be a symmetric rational matrix and let $G(\lambda)=D(\lambda)+G_{sp}(\lambda)$ be its unique decomposition into its polynomial part $D(\lambda)\in\F[\lambda]^{m\times m}$ and its strictly proper part $G_{sp}(\lambda)\in\F(\lambda)^{m\times m}.$ Assume that $\deg(D(\la))=k\geq 2$ and let $n=\nu(G(\la)).$ Consider a symmetric minimal state-space realization of $G_{sp}(\lambda),$ i.e., $G_{sp}(\lambda)=W(\lambda S_{1}-S_{2})^{-1}W^{T}$ as in Definition \ref{symmetricrel}, and $L(\lambda)\in \DM(D)$ with ansatz vector $e_{k}.$ Let $\mu\in\F,$ $\mu\neq 0.$ If the leading matrix coefficient $D_{k}$ of $D(\lambda)$ is nonsingular then, for any nonsigular matrix $Z\in  \F^{n\times n},$ the linear polynomial matrix
 \begin{equation}\label{symmetriclin}
\mathcal{L}(\lambda)= \left[
\begin{array}{c|c}

\mu Z(S_{2}-\lambda S_{1})Z^{T}&  0_{n\times (k-1)m}\quad \mu ZW^{T}\\
\hline \phantom{\Big|}

\begin{array}{c}
0_{(k-1)m\times n}\\
\mu WZ^{T}
\end{array}& \mu L(\lambda)
\end{array}
\right]
 \end{equation}
is a symmetric strong linearization of $G(\lambda).$ 
\end{theo}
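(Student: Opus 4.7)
The plan is to derive $\mathcal{L}(\lambda)$ as an $\M_1$-strong linearization of $G(\lambda)$ by combining Theorem \ref{muno} with Lemma \ref{more}, and then to read off symmetry block by block using Theorem \ref{symmetric}.

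First I would convert the symmetric realization $G_{sp}(\lambda)=W(\lambda S_1-S_2)^{-1}W^T$ into standard state-space form by setting $A:=S_1^{-1}S_2$, $B:=S_1^{-1}W^T$ and $C:=W$, so that $G_{sp}(\lambda)=C(\lambda I_n-A)^{-1}B$ is a minimal realization (which is legitimate because $n=\nu(G(\lambda))=\nu(G_{sp}(\lambda))$). Since $D_k$ is nonsingular and $L(\lambda)\in\DM(D)\subseteq\M_1(D)$ with ansatz vector $e_k$, Lemma \ref{leading} guarantees that, writing $L(\lambda)=[e_k\otimes I_m\ \ H]F_\Phi^D(\lambda)$, the matrix $[e_k\otimes I_m\ \ H]$ is nonsingular. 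Applying Theorem \ref{muno} with $v=e_k$ and $X=Y=I_n$ thus produces the $\M_1$-strong linearization
$$\mathcal{L}_0(\lambda)=\left[\begin{array}{c|c}\lambda I_n-A & 0_{n\times(k-1)m}\ \ B \\ \hline -(e_k\otimes I_m)C & L(\lambda)\end{array}\right]$$
of $G(\lambda)$.

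Next I would reshape $\mathcal{L}_0(\lambda)$ via Lemma \ref{more}, choosing $Q_1:=\mu Z S_1$ and $Q_3:=-Z^T$ in $\F^{n\times n}$ (both nonsingular since $\mu\neq 0$ and $Z,S_1$ are nonsingular), $Q_2:=\mu I_{km}$, $Q_4:=I_{km}$, and the coupling matrices in the lemma both zero. A direct block computation then gives: the top-left block equals $Q_1(\lambda I_n-A)Q_3=-\mu Z(\lambda S_1-S_2)Z^T=\mu Z(S_2-\lambda S_1)Z^T$; the top-right block equals $Q_1[0\ \ B]Q_4=[0\ \ \mu Z W^T]$ (using $Q_1 B=\mu Z S_1\cdot S_1^{-1}W^T=\mu Z W^T$); the bottom-left block equals $Q_2(-(e_k\otimes I_m)C)Q_3=\mu(e_k\otimes I_m)WZ^T=\left[\begin{smallmatrix}0\\ \mu WZ^T\end{smallmatrix}\right]$; and the bottom-right block equals $Q_2 L(\lambda)Q_4=\mu L(\lambda)$. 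Hence the transformed pencil is exactly $\mathcal{L}(\lambda)$, which by Lemma \ref{more} is a strong linearization of $G(\lambda)$.

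Finally I would verify symmetry block by block. The top-left block $\mu Z(S_2-\lambda S_1)Z^T$ is symmetric since $S_1$ and $S_2$ are symmetric. The top-right block $[0\ \ \mu Z W^T]$ is the transpose of the bottom-left block $\left[\begin{smallmatrix}0\\ \mu W Z^T\end{smallmatrix}\right]$. For the bottom-right block, $D(\lambda)$ is symmetric (as the polynomial part of the symmetric $G(\lambda)$) so by Theorem \ref{symmetric} every pencil in $\DM(D)$, in particular $L(\lambda)$, is symmetric, and therefore so is $\mu L(\lambda)$. Thus $\mathcal{L}(\lambda)=\mathcal{L}(\lambda)^T$. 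The main obstacle is identifying the correct scaling in Lemma \ref{more}: the factor $S_1$ built into $Q_1$ is what absorbs the $S_1^{-1}$ introduced by passing to standard state-space form and simultaneously yields the symmetric form $\mu Z(S_2-\lambda S_1)Z^T$ in the $(1,1)$-block, and the sign in $Q_3=-Z^T$ is what makes the overall block layout match the target $\mathcal{L}(\lambda)$; without these matched choices the transposition relating the off-diagonal blocks fails.
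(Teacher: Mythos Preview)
Your proof is correct and follows essentially the same approach as the paper. The only cosmetic difference is that the paper applies Theorem~\ref{muno} directly with $X=\mu Z S_1$, $Y=-Z^T$ and the pencil $L_\mu(\lambda)=\mu L(\lambda)\in\DM(D)$ with ansatz vector $\mu e_k$, whereas you first apply Theorem~\ref{muno} with $X=Y=I_n$ and ansatz vector $e_k$ and then reach the same endpoint via Lemma~\ref{more}; your explicit block-by-block symmetry check (invoking Theorem~\ref{symmetric}) is actually more detailed than what the paper writes out.
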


\begin{proof}Let $L(\lambda)=[e_{k}\otimes I_{m}\quad H]F_{\Phi}^D(\la)$ be the pencil in $\DM(D)$ with ansatz vector $e_{k}.$ Let us denote $L_{\mu}(\lambda)$ the pencil in $\mathbb{DM}(D)$ with ansatz vector $\mu e_{k},$ i.e., $L_{\mu}(\lambda)=\mu L(\lambda).$ Since $D_{k}$ is nonsingular, the matrix $[e_{k}\otimes I_{m}\quad H]$ is also nonsingular by using Lemma \ref{leading}. Notice that if $G_{sp}(\lambda)=W(\lambda S_{1}-S_{2})^{-1}W^{T}$ is a symmetric minimal state-space realization of $G_{sp}(\lambda)$ then $G_{sp}(\lambda)=W(\lambda I_{n}-S_{1}^{-1}S_{2})^{-1}S_{1}^{-1}W^{T}$ is a minimal state-space realization. It only remains to consider Theorem \ref{muno} with $X=\mu ZS_{1}$ and $Y=-Z^{T}.$  Equivalently, we can consider Theorem \ref{mdos} with $X= ZS_{1}$ and $Y=-\mu Z^{T}.$ 
\end{proof}

\begin{example} \rm
	
			Let $G(\lambda)\in \F(\lambda)^{m\times m}$ be a symmetric rational matrix and write $G(\lambda)=D(\lambda)+G_{sp}(\lambda)$ as sum of its polynomial part and its strictly proper part.
		 Suppose that
		$$
			D(\la)=D_{k}\la^{k}+D_{k-1}\la^{k-1}+ \cdots +D_{1}\la+ D_{0},$$
		with $k\geq 2$ and $D_{k}$ nonsingular, and write
		$G_{sp}(\lambda)=W(\lambda S_{1}-S_{2})^{-1}W^{T}$ as a symmetric minimal state-space realization.
			For the monomial basis we obtain by \cite[Theorem 3.5]{sympoly} that the pencil $L(\lambda)\in \mathbb{DL}(D)$ with ansatz vector $e_{k}$ is
			
			$$L(\lambda)= \lambda \left[ \begin{array}{ccccc}
			& & & & D_{k}\\
			& & & \iddots & D_{k-1}\\
			& & \iddots &\iddots & \vdots\\
			&\iddots &\iddots &  & D_{2}\\
			D_{k}  & D_{k-1} & \cdots & D_{2} & D_{1}
			
			\end{array}\right]-\left[\begin{array}{ccccc}
			& & & D_{k} & \\
			& &\iddots & D_{k-1} &\\
			& \iddots &\iddots & \vdots &\\
			D_{k}  & D_{k-1}  & \cdots & D_{2} &\\
			& & & &  -D_0
			\end{array}
			\right].$$
		Then, by Theorem \ref{strongsymmetric}, the linear polynomial matrix
		
		\begin{equation*}
		\mathcal{L}(\lambda)= \left[
		\begin{array}{c|c}
		
		S_{2}-\lambda S_{1}&  0_{n\times (k-1)m}\quad  W^{T}\\
		\hline 
		
		\begin{array}{c}
		0_{(k-1)m\times n}\\
		W
		\end{array}& L(\lambda)
		\end{array}
		\right]
		\end{equation*}
		is a symmetric strong linearization of $G(\lambda).$
\end{example}

We can obtain infinitely many symmetric strong linearizations by using Theorem \ref{strongsymmetric} and Lemma \ref{more}.

\begin{con}\label{newsymmetric} Under the same assumptions as in Theorem \ref{strongsymmetric}, consider the symmetric strong linearization $\mathcal{L}(\lambda)$ in \eqref{symmetriclin}. Let $Q\in\F^{n\times n},$ $P\in\F^{km\times km}$ be nonsingular matrices and $R\in\F^{km\times n}.$ Then 	$$\wh{\mathcal{L}}(\lambda)=\left[ \begin{array}{cc}
	Q & 0 \\
	R & P
	\end{array} \right]\mathcal{L}(\lambda)\left[ \begin{array}{cc}
	Q^{T} & R^{T} \\
	0 & P^{T}
	\end{array} \right]$$ is a symmetric strong linearization of $G(\lambda).$
\end{con}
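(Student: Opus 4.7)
The plan is to verify the two required properties of $\wh{\mathcal{L}}(\lambda)$ separately, namely that it is a strong linearization of $G(\lambda)$ and that it is symmetric. Both facts will follow immediately from results already established in the excerpt, so this corollary is really a direct application of Lemma \ref{more} together with symmetry bookkeeping.

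For the strong linearization property, I would simply invoke Lemma \ref{more} applied to $\mathcal{L}_{1}(\lambda)=\mathcal{L}(\lambda)$, with the choices $Q_{1}=Q$, $Q_{2}=P$, $Q_{3}=Q^{T}$, $Q_{4}=P^{T}$, $W=R$ and $Z=R^{T}$. The dimensions match those required by Lemma \ref{more}: $Q,Q^{T}\in\F^{n\times n}$ are nonsingular, $P,P^{T}\in\F^{km\times km}$ are nonsingular, and $R$ (resp.\ $R^{T}$) has the correct shape to occupy the lower-left (resp.\ upper-right) block. Since Theorem \ref{strongsymmetric} guarantees that $\mathcal{L}(\lambda)$ is a strong linearization of $G(\lambda)$, Lemma \ref{more} then yields at once that $\wh{\mathcal{L}}(\lambda)$ is a strong linearization of $G(\lambda)$.

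For symmetry, the key observation is that the two block matrices flanking $\mathcal{L}(\lambda)$ in the definition of $\wh{\mathcal{L}}(\lambda)$ are transposes of each other; indeed,
$$
\left[\begin{array}{cc} Q & 0 \\ R & P \end{array}\right]^{T}
=\left[\begin{array}{cc} Q^{T} & R^{T} \\ 0 & P^{T} \end{array}\right].
$$
Combining this with the fact that $\mathcal{L}(\lambda)=\mathcal{L}(\lambda)^{T}$ (by Theorem \ref{strongsymmetric}, and ultimately by Theorem \ref{symmetric}), taking transposes gives
$$
\wh{\mathcal{L}}(\lambda)^{T}
=\left[\begin{array}{cc} Q & 0 \\ R & P \end{array}\right]\mathcal{L}(\lambda)^{T}\left[\begin{array}{cc} Q^{T} & R^{T} \\ 0 & P^{T} \end{array}\right]
=\wh{\mathcal{L}}(\lambda),
$$
so $\wh{\mathcal{L}}(\lambda)$ is symmetric.

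There is no real obstacle here; the result is essentially a packaging of Lemma \ref{more} in the symmetry-preserving case, where the left and right strict equivalence transformations are constrained to be transposes of each other. The only thing worth flagging in the write-up is that the block sizes in the outer congruence factors conform to the partition of $\mathcal{L}(\lambda)$ used in Theorem \ref{strongsymmetric}, so that the products are well defined.
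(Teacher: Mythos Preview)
Your proposal is correct and follows exactly the approach the paper indicates: the paper does not spell out a proof for this corollary but simply remarks that it is obtained ``by using Theorem \ref{strongsymmetric} and Lemma \ref{more}'', which is precisely what you do, together with the trivial check that a congruence by $\left[\begin{smallmatrix} Q & 0 \\ R & P \end{smallmatrix}\right]$ preserves symmetry.
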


\section{Hermitian strong linearizations for Hermitian rational matrices}\label{sect:herm}
In this section we extend the results in Sections \ref{sect:realsym} and \ref{sect:sym} from symmetric to Hermitian rational matrices. Since most of the arguments are similar to those in the symmetric case, we limit ourselves to state the main results, and most of the proofs are ommitted. We consider the ring of polynomials $\C[\la]$ and a polynomial basis $\{\phi_{j}(\lambda)\}_{j=0}^{\infty}$ that satisfies the three-term recurrence relation:
\begin{equation*}
\alpha_{j}\phi_{j+1}(\lambda)=(\lambda-\beta_{j})\phi_{j}(\lambda)-\gamma_{j}\phi_{j-1}(\lambda) \quad j\geq 0
\end{equation*}
as in \eqref{recu}, with $\alpha_{j},\beta_{j},\gamma_{j}\in\R,$ $\alpha_{j}\neq 0,$ $\phi_{-1}(\lambda)=0,$ and $\phi_{0}(\lambda)=1.$  Let $P(\lambda)\in\C[\lambda]^{m\times m}$ be a polynomial matrix of degree $k$ written in terms of this basis, i.e., $P(\lambda)=\displaystyle\sum_{i=0}^{k} P_{i}\phi_{i}(\lambda)$ with $P_{i}\in\C^{m\times m}.$ Suppose that $P(\la)$ is \textit{Hermitian}, i.e., $P(\la)^{*}=P(\overline{\la})$ or, equivalently, $P^{*}(\la)=P(\la),$ where $P^{*}(\la)$ is defined as $P^*(\lambda)=\displaystyle\sum_{i=0}^{k} P_{i}^*\phi_{i}(\lambda)$ with $P_i^*$ the conjugate transpose of $P_i\in\C^{m\times m}.$  We also consider the set of pencils 
$$\mathbb{H}(P)=\{ \la X+Y\in\mathbb{M}_{1}(P): X^{*}=X, Y^{*}=Y\}.$$ That is, $\mathbb{H}(P)$ is the set of pencils in $\M_{1}(P)$ that are Hermitian. Theorem \ref{hermpencils} shows that the elements of $\mathbb{H}(P)$ are in $\DM(P),$ and that, in fact, they are the pencils in $\DM(P)$ with real ansatz vector. The proof of Theorem \ref{hermpencils} is ommitted for brevity since it is similar to the proof of \cite[Theorem 6.1]{sympoly}, which is Theorem \ref{hermpencils} in the particular case $\phi_j(\la)=\la^j$ for $j\geq 0.$
\begin{theo}\label{hermpencils}
	Let $P(\la)\in\C[\la]^{m\times m}$ be a Hermitian polynomial matrix. Then $\mathbb{H}(P)$ is the subset of all pencils in $\DM(P)$ with real ansatz vector.
\end{theo}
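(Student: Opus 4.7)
The plan is to prove the two inclusions separately by exploiting the behaviour of the ansatz-space defining equations under the coefficient-wise conjugate transpose. For a pencil $L(\la)=\la X+Y$ with $X,Y\in\C^{km\times km}$, let $L^*(\la):=\la X^*+Y^*$ denote the pencil obtained by conjugate-transposing the coefficients; then $L(\la)$ is Hermitian iff $L^*(\la)=L(\la)$. The key algebraic observation is that, since $\alpha_j,\beta_j,\gamma_j\in\R$ and $\phi_0(\la)=1$, the vector $\Phi_k(\la)$ has polynomial entries with \emph{real} coefficients, so $(\Phi_k(\la)\otimes I_m)^{*}=\Phi_k(\la)^{T}\otimes I_m$ as matrix polynomials.

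First I would show $\mathbb{H}(P)\subseteq\DM(P)$ and that every $L\in\mathbb{H}(P)$ has a real ansatz vector. Let $L(\la)\in\mathbb{H}(P)$ with right ansatz vector $v\in\C^{k}$, so that
\[
L(\la)(\Phi_k(\la)\otimes I_m)=v\otimes P(\la).
\]
Taking the coefficient-wise conjugate transpose of both sides, and using $L^*=L$ (Hermiticity of $L$), $P^*=P$ (Hermiticity of $P$), and the observation above about $\Phi_k(\la)$, I would obtain
\[
(\Phi_k(\la)^{T}\otimes I_m)L(\la)=\overline{v}^{T}\otimes P(\la),
\]
which shows $L(\la)\in\M_2(P)$ with left ansatz vector $\overline{v}$. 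Hence $L(\la)\in\DM(P)$, and \cite[Corollary 6]{ortho} forces the right and left ansatz vectors to coincide, giving $v=\overline{v}$, i.e., $v\in\R^{k}$.

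For the converse, I would use the uniqueness statement of Lemma \ref{bijective}. Let $L(\la)\in\DM(P)$ have real ansatz vector $v\in\R^{k}$. Applying the same conjugate-transpose manipulation to both ansatz equations satisfied by $L(\la)$ (as an element of $\M_1(P)$ with right ansatz vector $v$ and as an element of $\M_2(P)$ with left ansatz vector $v$), I would conclude that $L^*(\la)\in\DM(P)$ with ansatz vector $\overline{v}=v$. By Lemma \ref{bijective} the pencil in $\DM(P)$ with a given ansatz vector is unique, so $L^*(\la)=L(\la)$, i.e., $L(\la)$ is Hermitian.

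The main obstacle is the algebraic bookkeeping of conjugate transposes; in particular, verifying that the hypothesis $\alpha_j,\beta_j,\gamma_j\in\R$ really is what allows the manipulation of $\Phi_k(\la)$, since without it $\Phi_k(\la)$ would itself get conjugated and the argument would collapse. Once this is in place, the structure of the proof parallels that of \cite[Theorem 6.1]{sympoly}, which is the monomial case.
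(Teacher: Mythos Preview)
Your proposal is correct and follows essentially the same approach as the paper, which omits the proof and refers to \cite[Theorem 6.1]{sympoly} (the monomial case); your argument is precisely the natural extension of that proof to bases satisfying \eqref{recu}, and you have correctly isolated the one new ingredient, namely that the hypothesis $\alpha_j,\beta_j,\gamma_j\in\R$ ensures $\Phi_k(\la)$ has real polynomial entries so that the coefficient-wise conjugate transpose passes through it cleanly.
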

Let $G(\la)\in\C(\la)^{m\times m}$ be a Hermitian rational matrix, i.e., a rational matrix satisfying $G(\la)^*=G(\overline{\la}).$ Consider $G(\la)=D(\la)+G_{sp}(\la)$ as in \eqref{eq.polspdec}. Then $D(\la)$ and $G_{sp}(\la)$ are also Hermitian. For Hermitian strictly proper rational matrices we introduce the notion of Hermitian minimal state-space realizations, in the spirit of Definition \ref{symmetricrel}. 

\begin{deff}\label{hermitianrel}
	Let $G_{sp}(\lambda)\in \C(\lambda)^{m\times m}$ be a Hermitian strictly proper rational matrix and let $n=\nu(G_{sp}(\lambda)).$ A Hermitian minimal state-space realization of $G_{sp}(\lambda)$ is an expression of the form
	$$G_{sp}(\lambda)=W(\lambda H_{1}-H_{2})^{-1}W^{*}$$
	where $H_{1},H_{2}\in\C^{n\times n}$ are Hermitian matrices, with $H_{1}$ nonsingular, and $W\in\C^{m\times n}.$
\end{deff}

Following arguments similar to those in Lemma \ref{ese} and Proposition \ref{symmetricsp}, it is easy to see that the strictly proper part of a Hermitian rational matrix has a Hermitian minimal state-space realization. 

\begin{prop}\label{hermsp} Any Hermitian strictly proper rational matrix has a Hermitian minimal state-space realization. 
\end{prop}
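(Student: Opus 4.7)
The plan is to mirror the argument for Proposition \ref{symmetricsp} word-for-word, with conjugate transposition replacing transposition throughout. First I would prove the Hermitian counterpart of Lemma \ref{ese}: given any minimal state-space realization $G_{sp}(\lambda)=C(\lambda I_n-A)^{-1}B$ of a Hermitian strictly proper $G_{sp}(\lambda)$, there exists a unique nonsingular Hermitian $H\in\C^{n\times n}$ such that $A^*=H^{-1}AH$, $C^*=H^{-1}B$ and $B^*=CH$. The Hermiticity of $G_{sp}(\lambda)$ forces its Markov parameters $CA^{i-1}B$ to equal their own conjugate transposes $B^*(A^*)^{i-1}C^*$, so $(A,B,C)$ and $(A^*,C^*,B^*)$ are both minimal state-space realizations of the same rational matrix, and the state-space isomorphism theorem \cite[Proposition 3.3.2]{real} (which holds equally over $\C$) yields the unique nonsingular $H$ interrelating them.

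To verify $H^*=H$, I would exploit the controllability and observability matrices exactly as in the proof of Lemma \ref{ese}. Iterating $A^*=H^{-1}AH$ together with $C^*=H^{-1}B$ gives $H^{-1}\mathcal{C}(A,B)=\mathcal{O}(A,C)^*$; rewriting $B^*=CH$ as $B=H^*C^*$ and using the conjugate-transposed identity $A=H^*A^*H^{-*}$ yields $H^{-*}\mathcal{C}(A,B)=\mathcal{O}(A,C)^*$. Subtracting these two identities and invoking the full row rank of $\mathcal{C}(A,B)$ (which holds because the realization is minimal, hence controllable) forces $H^{-1}=H^{-*}$, i.e., $H^*=H$.

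With the lemma in place, the proposition follows immediately. From $B=HC^*$ one rewrites
\begin{equation*}
G_{sp}(\lambda)=C(\lambda I_n-A)^{-1}HC^*=C(\lambda H^{-1}-H^{-1}A)^{-1}C^*,
\end{equation*}
and the Hermiticity of $H$ implies that $H_1:=H^{-1}$ is Hermitian and nonsingular, while $H_2:=H^{-1}A$ is Hermitian because $(H^{-1}A)^*=A^*H^{-*}=A^*H^{-1}=H^{-1}AHH^{-1}=H^{-1}A$. Setting $W:=C$ produces a Hermitian minimal state-space realization $G_{sp}(\lambda)=W(\lambda H_1-H_2)^{-1}W^*$ in the sense of Definition \ref{hermitianrel}. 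The only delicate point in the whole argument is keeping conjugate transposition and ordinary transposition strictly separated when manipulating the controllability/observability identities, in order to conclude that $H$ is \emph{Hermitian} rather than merely complex symmetric; once this is done, the rest transcribes routinely from the symmetric case.
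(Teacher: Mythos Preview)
Your proof is correct and follows exactly the same approach as the paper's own proof, which simply states that one proves the Hermitian analogue of Lemma~\ref{ese} and then writes $G_{sp}(\lambda)=C(\lambda H^{-1}-H^{-1}A)^{-1}C^{*}$. You have supplied more detail than the paper---in particular the verification that $H^{*}=H$ via the controllability/observability identities and the explicit check that $H^{-1}A$ is Hermitian---but the strategy is identical.
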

\begin{proof}
In order to obtain a Hermitian minimal state-space realization of $G_{sp}(\la),$ we can consider a minimal state-space realization $G_{sp}(\lambda)=C(\lambda I_{n}-A)^{-1}B.$ We prove analogously to Lemma \ref{ese} that there exists a unique nonsingular and Hermitian matrix $H\in \C^{n\times n}$ such that $A^{*}=H^{-1}AH \text{ and } C^{*}=H^{-1}B.$ Therefore, $G_{sp}(\lambda)=C(\lambda H^{-1}-H^{-1}A)^{-1}C^{*}$ is a Hermitian minimal state-space realization of $G_{sp}(\la).$
\end{proof}
\begin{rem}\rm 
	 Another constructive way to prove Proposition \ref{hermsp} is to consider the Hankel matrix $H_{n}$ of $G_{sp}(\la)$ defined in \eqref{hankelmatrix}, that is also Hermitian, and write 
	 $$H_{n}=U\left[\begin{array}{cc}
	 K & 0 \\
	 0 & 0
	 \end{array}\right]U^{*}$$
	 with $U$ \textit{unitary}, i.e., $U^{-1}=U^{*},$ and $K$ a diagonal matrix that has the eigenvalues of $H_{n}$ at the diagonal elements. Then proceed as in the last paragraph of Remark \ref{hankel} to get a Hermitian minimal state-space realization. Notice that $K$ is Hermitian because the eigenvalues of $H_{n}$ are real.
\end{rem}
 By using Proposition \ref{hermsp} and Theorem \ref{hermpencils}, we obtain in Theorem \ref{stronghermitian} Hermitian strong linearizations of a Hermitian rational matrix when the leading coefficient of its polynomial part is nonsingular, analogously as we did in Theorem \ref{strongsymmetric} for the symmetric case.
	
	\begin{theo}\label{stronghermitian}
		Let $G(\lambda)\in \C(\lambda)^{m\times m}$ be a Hermitian rational matrix and let $G(\lambda)=D(\lambda)+G_{sp}(\lambda)$ be its unique decomposition into its polynomial part $D(\lambda)\in\C[\lambda]^{m\times m}$ and its strictly proper part $G_{sp}(\lambda)\in\C(\lambda)^{m\times m}.$ Assume that $\deg(D(\la))=k\geq 2$ and let $n=\nu(G(\la)).$ Consider a Hermitian minimal state-space realization of $G_{sp}(\lambda),$ i.e., $G_{sp}(\lambda)=W(\lambda H_{1}-H_{2})^{-1}W^{*}$ as in Definition \ref{hermitianrel}, and $L(\lambda)\in \DM(D)$ with ansatz vector $e_{k}.$ Let $\mu\in\R,$ $\mu\neq 0.$ If the leading matrix coefficient $D_{k}$ of $D(\lambda)$ is nonsingular then, for any nonsigular matrix $Z\in  \C^{n\times n},$ the linear polynomial matrix
		\begin{equation}\label{eq_Her}
		\mathcal{L}(\lambda)= \left[
		\begin{array}{c|c}
		
		\mu Z(H_{2}-\lambda H_{1})Z^{*}&  0_{n\times (k-1)m}\quad \mu ZW^{*}\\
		\hline \phantom{\Big|}
		
		\begin{array}{c}
		0_{(k-1)m\times n}\\
		\mu WZ^{*}
		\end{array}& \mu L(\lambda)
		\end{array}
		\right]
		\end{equation}
	is a Hermitian strong linearization of $G(\lambda).$ 
	\end{theo}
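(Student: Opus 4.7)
The plan is to mirror almost exactly the proof of Theorem \ref{strongsymmetric}, replacing transposes with conjugate transposes and invoking the Hermitian counterparts of the key auxiliary results. First, I would observe that the pencil $L(\la)\in\DM(D)$ with ansatz vector $e_k$ is in fact Hermitian: since $D(\la)$ is Hermitian and $e_k\in\R^k$ is a real vector, Theorem \ref{hermpencils} yields $L(\la)\in\mathbb{H}(D)$, so $L(\la)^*=L(\overline{\la})$. Consequently, $L_\mu(\la):=\mu L(\la)$ with $\mu\in\R\setminus\{0\}$ is also Hermitian, and its ansatz vector is the real vector $\mu e_k$.

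Next, writing $L(\la)=[\,e_k\otimes I_m \;\; H\,]F_\Phi^D(\la)$, the nonsingularity of the leading coefficient $D_k$ together with Lemma \ref{leading} guarantees that $[\,e_k\otimes I_m\;\;H\,]$ is nonsingular, so that $L_\mu(\la)=[\,\mu e_k\otimes I_m\;\;\mu H\,]F_\Phi^D(\la)$ is a strong linearization of $D(\la)$ belonging to $\M_1(D)$ in the sense of Corollary \ref{ele}. This puts us in a position to apply Theorem \ref{muno}. Rewriting the Hermitian minimal realization as $G_{sp}(\la)=W(\la I_n-H_1^{-1}H_2)^{-1}H_1^{-1}W^*$, which is an ordinary minimal state-space realization with $A=H_1^{-1}H_2$, $B=H_1^{-1}W^*$ and $C=W$, I would set $X=\mu Z H_1$, $Y=-Z^*$, and $v=\mu e_k$. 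A direct computation then gives
\begin{align*}
X(\la I_n-A)Y &= \mu Z(H_2-\la H_1)Z^*, \\
XB &= \mu Z W^*, \\
-(v\otimes I_m)CY &= \mu(e_k\otimes I_m)WZ^*,
\end{align*}
which matches block-by-block the pencil $\mathcal{L}(\la)$ displayed in \eqref{eq_Her}. By Theorem \ref{muno}, $\mathcal{L}(\la)$ is a strong linearization of $G(\la)$.

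Finally, I would verify Hermitian-ness of $\mathcal{L}(\la)$. Since $H_1,H_2$ are Hermitian and $\mu,Z$ produce the factor $\mu Z(\,\cdot\,)Z^*$, the diagonal block $\mu Z(H_2-\la H_1)Z^*$ satisfies $(\,\cdot\,)^*\!\mid_{\la\mapsto\overline{\la}}$ trivially. The off-diagonal blocks $[\,0\;\;\mu ZW^*\,]$ and $\bigl[\begin{smallmatrix}0\\ \mu WZ^*\end{smallmatrix}\bigr]$ are conjugate transposes of each other, and $\mu L(\la)$ is Hermitian by the first paragraph. Thus $\mathcal{L}(\la)^*=\mathcal{L}(\overline{\la})$, completing the proof. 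I do not anticipate any genuine obstacle: the only place that required nontrivial input was Theorem \ref{hermpencils} (to make $L(\la)$ automatically Hermitian from a real ansatz vector) and Proposition \ref{hermsp} (to supply the Hermitian realization), both of which are already in hand.
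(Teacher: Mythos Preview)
Your proposal is correct and follows essentially the same route as the paper: it mirrors the proof of Theorem \ref{strongsymmetric}, invoking Lemma \ref{leading} for the nonsingularity of $[e_k\otimes I_m\;\;H]$, rewriting the Hermitian realization as an ordinary minimal state-space realization, and applying Theorem \ref{muno} with $X=\mu Z H_1$, $Y=-Z^*$, $v=\mu e_k$. The explicit appeal to Theorem \ref{hermpencils} to certify $L(\la)\in\mathbb{H}(D)$ and the closing verification of Hermitian-ness of $\mathcal{L}(\la)$ are exactly the right additional ingredients in the Hermitian setting.
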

As in Corollary \ref{newsymmetric}, we can obtain infinitely many Hermitian strong linearizations by using Theorem \ref{stronghermitian} and Lemma \ref{more}.

\begin{con}\label{newhermitian} Under the same assumptions as in Theorem \ref{stronghermitian}, consider the Hermitian strong linearization $\mathcal{L}(\lambda)$ in \eqref{eq_Her}. Let $Q\in\C^{n\times n},$ $P\in\C^{km\times km}$ be nonsingular matrices and $R\in\C^{km\times n}.$ Then 	$$\wh{\mathcal{L}}(\lambda)=\left[ \begin{array}{cc}
	Q & 0 \\
	R & P
	\end{array} \right]\mathcal{L}(\lambda)\left[ \begin{array}{cc}
	Q^{*} & R^{*} \\
	0 & P^{*}
	\end{array} \right]$$ is a Hermitian strong linearization of $G(\lambda).$
\end{con}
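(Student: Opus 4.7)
The plan is to combine Lemma \ref{more} (to obtain that $\wh{\mathcal{L}}(\lambda)$ is a strong linearization) with a direct verification that the congruence-type transformation preserves the Hermitian structure. No genuine obstacle is expected: the result is the Hermitian mirror of Corollary \ref{newsymmetric}, and the congruence structure built into the statement is what makes both facts go through simultaneously.

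First I would apply Lemma \ref{more} with the identifications $Q_{1}=Q$, $Q_{3}=Q^{*}$, $Q_{2}=P$, $Q_{4}=P^{*}$, $W=R$, and $Z=R^{*}$. Since $Q$ and $P$ are nonsingular over $\C$, so are $Q^{*}$ and $P^{*}$, so all hypotheses of Lemma \ref{more} are met. Note that Lemma \ref{more} is stated for a general field $\F$ and in particular remains valid for $\F=\C$, with no requirement that the four outer block matrices satisfy any compatibility relation beyond invertibility. Applying the lemma to the strong linearization $\mathcal{L}(\lambda)$ of $G(\lambda)$ provided by Theorem \ref{stronghermitian} yields that $\wh{\mathcal{L}}(\lambda)$ is a strong linearization of $G(\lambda)$.

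Next I would verify Hermitianness. Writing
\[
T=\begin{bmatrix} Q & 0 \\ R & P \end{bmatrix}, \qquad T^{*}=\begin{bmatrix} Q^{*} & R^{*} \\ 0 & P^{*} \end{bmatrix},
\]
the definition gives $\wh{\mathcal{L}}(\lambda)=T\,\mathcal{L}(\lambda)\,T^{*}$. Since $\mathcal{L}(\lambda)$ is Hermitian in the sense that $\mathcal{L}(\lambda)^{*}=\mathcal{L}(\overline{\lambda})$ by Theorem \ref{stronghermitian}, we compute
\[
\wh{\mathcal{L}}(\lambda)^{*}=\bigl(T\,\mathcal{L}(\lambda)\,T^{*}\bigr)^{*}=T\,\mathcal{L}(\lambda)^{*}\,T^{*}=T\,\mathcal{L}(\overline{\lambda})\,T^{*}=\wh{\mathcal{L}}(\overline{\lambda}),
\]
using only that $(AB)^{*}=B^{*}A^{*}$ and that $T$ and $T^{*}$ are constant (i.e., independent of $\lambda$). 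This proves $\wh{\mathcal{L}}(\lambda)$ is Hermitian.

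Combining both steps gives the corollary. The only subtlety worth noting in the write-up is why one insists on the congruence form $T\,\mathcal{L}(\lambda)\,T^{*}$ rather than a general strict equivalence of the type allowed by Lemma \ref{more}: the congruence is precisely what is needed to preserve the Hermitian structure, whereas a general strict equivalence would destroy it. This parallels the role of $Q^{T}$, $R^{T}$, $P^{T}$ in Corollary \ref{newsymmetric} for the symmetric case.
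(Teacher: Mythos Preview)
Your proposal is correct and follows exactly the approach the paper indicates: the paper does not give a separate proof of this corollary but simply remarks that it follows from Theorem \ref{stronghermitian} and Lemma \ref{more}, which is precisely what you do, together with the straightforward congruence check for Hermitianness.
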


\section{Strong linearizations of rational matrices with polynomial part expressed in other polynomial bases}\label{sect:other}

Polynomial bases $\{\phi_{j}(\lambda)\}_{j=0}^{\infty}$ satisfying a three-term recurrence relation as in \eqref{recu} are by far the most useful in applications. However, from a theoretical point of view, a natural question is whether or not the results in this paper can be extended to other polynomial bases. The goal of this section is to show that this can be done by using exactly the same tools that we have used in previous sections, that is, \cite[Theorem 8.11]{strong}, our key Lemma \ref{more}, and the results in \cite{ortho}. Since the arguments in this section are very similar to the ones previously used, we will simply sketch the main ideas.

Let $D(\la)$ be the polynomial part of a rational matrix $G(\la)\in\efe(\la)^{m\times m},$ with $\deg(D(\la))\allowbreak=k\geq 2.$ Let us consider, motivated by \eqref{efe} and its properties, a polynomial basis $\{\psi_{j}(\lambda)\}_{j=0}^{\infty}$ of $\F[\lambda],$ with $\psi_{j}(\lambda)$ a polynomial of degree $j,$  that satisfies a linear relation:
\begin{equation}\label{linearrelation}
M_{\Psi}(\la) \Psi_{k}(\lambda) =0,
\end{equation}
where $M_{\Psi}(\la)\in\efe[\la]^{(k-1)\times k}$ is a minimal basis with all its row degrees equal to $1,$ and  $\Psi_{k}(\lambda)=[\psi_{k-1}(\lambda)\;\cdots\; \psi_{1}(\lambda)\text{ }\psi_{0}(\lambda)]^{T}$ with $\Psi_{k}(\lambda_0)\neq 0$ for all $\la_0\in\overline{\efe} .$ Then there exists a vector $w\in\efe^{k}$ such that 
\begin{equation}\label{unimodular}
U(\lambda)=\left[ {\begin{array}{cc}
	M_{\Psi}(\lambda)\otimes I_{m} \\
	w^T\otimes I_{m}
	\end{array} } \right]
\end{equation}
is unimodular, and its inverse has the form $U(\lambda)^{-1}=[\widehat{\Psi}_{k}(\lambda)\quad \Psi_{k}(\lambda)\otimes I_{m}]$
with $\widehat{\Psi}_{k}(\lambda)\in\F[\lambda]^{km\times (k-1)m}$ (see \cite[Lemma 8.4]{strong}). Let
\begin{equation}\label{efe_strongblock}
F_{\Psi}^{D}(\la)=\left[\begin{array}{c}
m_{\Psi}^{D}(\la)\\
M_{\Psi}(\la)\otimes I_{m}
\end{array}\right]\in \F[\lambda]^{km\times km}
\end{equation}
be a pencil such that $m_{\Psi}^{D}(\la)(\Psi_{k}(\lambda)\otimes I_{m})=D(\la).$ Then, $F_{\Psi}^{D}(\lambda)$ is a strong block minimal bases pencil associated to $D(\lambda)$ with sharp degree which verifies $F_{\Psi}^{D}(\lambda)(\Psi_{k}(\lambda) \otimes I_{m})=e_{1}\otimes D(\lambda).$ Thus, we can apply \cite[Theorem 8.11]{strong} and Lemma \ref{more} in order to construct strong linearizations of $G(\la)$ from pencils of the form $L(\la)=[v\otimes I_{m}\quad H]F_{\Psi}^{D}(\lambda)$ with $v\in\F^{k}$ and $[v\otimes I_{m}\quad H]$  nonsingular. Notice that pencils $L(\la)$ of this form verify the ansatz relation $L(\lambda)(\Psi_{k}(\lambda)\otimes I_{m})=v\otimes D(\lambda).$ In summary, with those arguments, we obtain the following result that is the generalization of Theorem \ref{muno} for polynomial bases as in \eqref{linearrelation}. 

\begin{theo}\label{otherbases} Let $G(\lambda)\in \F(\lambda)^{m\times m}$ be a rational matrix written as in \eqref{eq.polspdec}, and let $G_{sp}(\lambda)=C(\lambda I_{n}-A)^{-1}B$ be a minimal order state-space realization of $G_{sp}(\lambda).$ Assume that $deg(D(\lambda))\geq 2$ and write $D(\lambda)$ in terms of a polynomial basis $\{\psi_{j}(\lambda)\}_{j=0}^{\infty}$ satisfying \eqref{linearrelation}, as
	\begin{equation}\label{express_otherbases}
D(\lambda)=D_{k}\psi_{k}(\lambda)+D_{k-1}\psi_{k-1}(\lambda)+\cdots + D_{1}\psi_{1}(\lambda)+D_{0}\psi_{0}(\lambda)
	\end{equation}
	with $D_{k}\neq 0.$ Let $L(\la)=[v\otimes I_{m}\quad H]F_{\Psi}^{D}(\lambda)$ with $[v\otimes I_{m}\quad H]$ nonsingular and $F_{\Psi}^{D}(\la)$ as in \eqref{efe_strongblock}. Let $w\in\efe^{k}$ be the vector in \eqref{unimodular}. Then, for any nonsingular matrices $X,Y\in\F^{n\times n}$ the linear polynomial matrix
	$$\mathcal{L}(\lambda)= \left[
	\begin{array}{c|c}
	
	X(\lambda I_{n}-A)Y&  XB(w^T\otimes I_m)\\
	\hline \phantom{\Big|}
	
	\begin{array}{c}
	-(v\otimes I_m)CY\\
	\end{array}&L(\lambda)
	\end{array}
	\right]$$
	is a strong linearization of $G(\lambda).$
	
\end{theo}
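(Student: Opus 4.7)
My plan is to mirror the proof of Theorem \ref{muno}, replacing $F_{\Phi}^{D}(\la)$ by the pencil $F_{\Psi}^{D}(\la)$ coming from the basis $\{\psi_j\}$ and using the unimodular completion \eqref{unimodular} in place of Lemma \ref{unimod}. The argument breaks into three steps: (i) show that $F_{\Psi}^{D}(\la)$ is a strong block minimal bases pencil associated to $D(\la)$ with sharp degree; (ii) apply \cite[Theorem 8.11]{strong} to obtain a first strong linearization of $G(\la)$; and (iii) invoke the key Lemma \ref{more} to reach the pencil in the statement.

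For step (i), I would follow the template of Lemma \ref{strongblock}. By hypothesis, $M_{\Psi}(\la)$ is a minimal basis with all row degrees equal to $1$, hence so is $M_{\Psi}(\la)\otimes I_m$ by \cite[Corollary 2.4]{BKL}. The row vector $\Psi_k(\la)^T$ is a minimal basis because $\Psi_k(\la_0)\neq 0$ for all $\la_0\in\overline{\F}$ (full row rank) and its highest row degree coefficient is the leading coefficient of $\psi_{k-1}(\la)$, which is nonzero (row reducedness); consequently $\Psi_k(\la)^T\otimes I_m$ is also a minimal basis. The relation \eqref{linearrelation} yields $(M_{\Psi}(\la)\otimes I_m)(\Psi_k(\la)\otimes I_m)=0$, so the two bases are dual (their row counts sum to $km$), their row degrees are all equal to $1$ and $k-1$ respectively, and $m_{\Psi}^{D}(\la)(\Psi_k(\la)\otimes I_m)=D(\la)$ by construction; sharp degree follows from $\deg(D(\la))=k=(k-1)+1$.

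For step (ii), I would apply \cite[Theorem 8.11]{strong} with $K_1(\la)=M_{\Psi}(\la)\otimes I_m$, $\widehat{K}_1=w^T\otimes I_m$ (the unimodular completion provided by \eqref{unimodular}), $K_2$ empty and $\widehat{K}_2=I_m$. This yields the base strong linearization
$$\mathcal{L}_0(\la)=\left[
\begin{array}{c|c}
X(\la I_n -A)Y & XB(w^T\otimes I_m)\\ \hline
\begin{array}{c} -CY \\ 0_{(k-1)m\times n} \end{array} & F_{\Psi}^{D}(\la)
\end{array}\right]$$
of $G(\la)$. Finally, for step (iii), I would left-multiply $\mathcal{L}_0(\la)$ by $\diag(I_n,\, [v\otimes I_m\ H])$, which is nonsingular by hypothesis; this turns $F_{\Psi}^{D}(\la)$ into $L(\la)$ and the left block of its block row into $-(v\otimes I_m)CY$, while leaving the top block row unchanged. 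Lemma \ref{more} then guarantees that the resulting pencil, which coincides with the $\mathcal{L}(\la)$ in the statement, is a strong linearization of $G(\la)$.

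The main obstacle I anticipate is step (i): confirming that the general hypotheses on $\{\psi_j\}$ (only a linear relation \eqref{linearrelation} with a minimal basis $M_{\Psi}$ of row degree $1$, plus $\Psi_k(\la_0)\neq 0$ everywhere and $\deg\psi_j=j$) suffice to inherit all the structural ingredients used in Lemma \ref{strongblock}. Once that verification is in place, steps (ii) and (iii) are formally identical to the three-term recurrence case and require no new ideas.
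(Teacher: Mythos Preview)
Your proposal is correct and follows exactly the approach the paper sketches in the paragraph preceding Theorem~\ref{otherbases}: verify that $F_{\Psi}^{D}(\la)$ is a strong block minimal bases pencil with sharp degree (as in Lemma~\ref{strongblock}), apply \cite[Theorem 8.11]{strong} with the unimodular completion \eqref{unimodular} to obtain a base strong linearization, and then left-multiply by $\diag(I_n,[v\otimes I_m\ \ H])$ via Lemma~\ref{more}. Your step~(i) verification is more detailed than what the paper provides (the paper simply asserts the strong block minimal bases property), but the ingredients you check are exactly the ones needed and all hold under the stated hypotheses.
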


In a similar manner, the results in Sections \ref{sect:m2} and \ref{recovery} can be extended to square rational matrices with polynomial parts expressed in terms of polynomial bases as in \eqref{linearrelation}.  

In Example \ref{poly_basis2} we consider degree-graded polynomial bases presented in \cite[Section 9]{ortho}, and we construct strong linearizations of square rational matrices by expressing the polynomial parts in terms of these bases and using Theorem \ref{otherbases}. 

\begin{example}\label{poly_basis2} \rm Let  $\{\psi_{j}(\lambda)\}_{j=0}^{\infty}$ be a degree-graded polynomial basis of $\F[\lambda]$ that satisfies the following recurrence relation:
 \begin{equation*}\label{recu2}
 \psi_{j}(\lambda)=(\lambda-\alpha_{j})\psi_{j-1}(\lambda)+\displaystyle\sum_{i=0}^{j-2}\beta_{j}^{i}\psi_{i}(\lambda) \quad j\geq 1
 \end{equation*}
 where $\alpha_{j}\in\efe$ for $j\geq 1,$ $\beta_{j}^{i}\in\efe$ for $j\geq 2,$ $0\leq i\leq j-2$ and $\psi_{0}(\lambda)=1.$ Let $G(\la)=D(\la) + C(\lambda I_{n}-A)^{-1}B$  be an $m\times m$ rational matrix written as in Theorem \ref{otherbases}. We express the polynomial part $D(\la)$ in terms of the polynomial basis $\{\psi_{j}(\lambda)\}_{j=0}^{\infty},$ as in \eqref{express_otherbases}. Let us denote $\Psi_{k}(\lambda)=[\psi_{k-1}(\lambda)\;\cdots\; \psi_{1}(\lambda)\text{ }\psi_{0}(\lambda)]^{T}$ and consider the following pencil $G_{\Psi}^{D}(\lambda)$ introduced in \cite[Section 9]{ortho}: $$G_{\Psi}^{D}(\lambda)=\left[ {\begin{array}{cc}
 	m_{\Psi}^{D}(\lambda) \\
 	M_{\Psi}(\lambda)\otimes I_{m} \\
 	\end{array} } \right]\in \F[\lambda]^{km\times km},$$ where
 \begin{equation*}
 m_{\Psi}^{D}(\lambda)=\left[ (\lambda -\alpha_{k})D_{k}+D_{k-1}\quad \beta_{k}^{k-2}D_{k}+D_{k-2}\quad  \cdots\quad \beta_{k}^{1}D_{k}+ D_{1} \quad \beta_{k}^{0}D_{k}+ D_{0}\right],
 \end{equation*}
 and
 \begin{equation*}
 M_{\Psi}(\lambda)=
 \left[ {\begin{array}{cccccccc}
 	-1 & (\lambda -\alpha_{k-1}) & \beta_{k-1}^{k-3} & \beta_{k-1}^{k-4} &\cdots & \beta_{k-1}^{2} & \beta_{k-1}^{1} & \beta_{k-1}^{0}   \\
 	& -1 & (\lambda - \alpha_{k-2}) & \beta_{k-2}^{k-4} &\cdots & \beta_{k-2}^{2} & \beta_{k-2}^{1} & \beta_{k-2}^{0} \\
 	& &\ddots &\ddots & \ddots  &  \vdots & \vdots & \vdots  \\
 	& & & & &-1& (\lambda-\alpha_{2}) & \beta_{2}^{0} \\
 	& & & & & &-1& (\lambda-\alpha_{1})
 	\end{array} } \right] .
 \end{equation*}
 The matrix $G_{\Psi}^{D}(\lambda)$ verifies that 
 $G_{\Psi}^{D}(\lambda)(\Psi_{k}(\lambda) \otimes I_{m})=e_{1}\otimes D(\lambda).$
 Moreover, $G_{\Psi}^{D}(\lambda)$ is a strong block minimal bases pencil associated to $D(\lambda)$ with sharp degree. It can be proved, as in \cite[Theorem 1]{ortho}, that any pencil $L(\la)$ that verifies $L(\lambda)(\Psi_{k}(\lambda)\otimes I_{m})=v\otimes D(\lambda)$ for some vector $v\in\F^{k}$ can be written as $L(\lambda)=[v\otimes I_{m}\quad H]G_{\Psi}^{D}(\lambda)$
 for some matrix $H\in\F^{km \times (k-1)m}.$ If we consider a pencil $L(\lambda)$ of this form with $[v\otimes I_{m}\quad H]$ nonsingular we can obtain strong linearizations for $G(\la).$ In particular, we have that conditions in Theorem \ref{otherbases} hold, and we can apply it with $w=e_k.$ Then, we have that for any nonsingular matrices $X,Y\in\F^{n\times n}$ the linear polynomial matrix
 $$\mathcal{L}(\lambda)= \left[
 \begin{array}{c|c}
 
 X(\lambda I_{n}-A)Y& 0_{n\times (k-1)m}\quad XB\\
 \hline \phantom{\Big|}
 
 \begin{array}{c}
 -(v\otimes I_m)CY\\
 \end{array}&L(\lambda)
 \end{array}
 \right]$$
 is a strong linearization of $G(\lambda).$ 
\end{example}

\section{Conclusions and future work}\label{sect:con}

As a consequence of the definitions and the theory developed in \cite{strong}, we have proved the simple Lemma \ref{more}, which allows us to construct infinitely many strong linearizations of any rational matrix $G(\la)\in\efe(\la)^{p\times m}$ from any given strong linearization of $G(\la).$ This result has been combined with some of the strong linearizations of a rational matrix $G(\la)\in\efe(\la)^{m\times m}$ constructed in \cite[Theorem 8.11]{strong} and with the strong linearizations of its polynomial part presented in \cite{ortho} to create new families of strong linearizations of square rational matrices. The recovery of the eigenvectors of the rational matrix from those of the linearizations in these families has been thoroughly investigated, as well as the preservation of symmetric and Hermitian structures of the rational matrix in the linearizations. 

We are convinced that the techniques developed in this paper together with the results in \cite{strong} can be applied to solve essentially all the following problems: How to construct a strong linearization of a rational matrix $G(\la)$ expressed as the sum of its polynomial part $D(\la)$ and its strictly proper part $G_{sp}(\la),$ given a strong linearization of $D(\la)$ in any of the families of strong linearizations of polynomial matrices developed in the last years and a minimal order state-space realization of $G_{sp}(\la).$ In particular, we hope that these techniques will allow to construct strong linearizations of rational matrices preserving structures that are different from the symmetric and Hermitian structures. However, we emphasize that, although any rational matrix can be expressed as the sum of its polynomial and strictly proper parts, this expression may not be easily available from the applications and/or may not be the best representation in a particular problem. Therefore, the development of strong linearizations of rational matrices starting from other representations is a problem that will be investigated in the future.

\section*{References}


\begin{thebibliography}{01}
	
\bibitem{AlBe16} R.~Alam, N.~Behera,
	\textit{Linearizations for rational matrix functions and Rosenbrock
	system polynomials}, SIAM J. Matrix Anal. Appl., 37(1), 354--380, 2016.

\bibitem{polybases} A.~Amiraslani, R.~Corless, P.~Lancaster, \textit{Linearization of matrix polynomials expressed in polynomial bases}, IMA J. Numer. Anal., 29(1), 141--157, 2009.

\bibitem{AmMaZa15} A.~Amparan, S.~Marcaida, I.~Zaballa,
	\textit{Finite and infinite structures of rational matrices: a local approach}, Electron. J. Linear Algebra 30, 196--226, 2015.
	
	\bibitem{strong} A.~Amparan, F.~M. Dopico, S.~Marcaida, I.~Zaballa, \textit{Strong linearizations of rational matrices}, submitted. {A}vailable as MIMS EPrint 2016.51, Manchester Institute for Mathematical Sciences, The University
of Manchester, UK, 2016.
	
	\bibitem{Antoulas} A.~C. Antoulas, \textit{Approximation of Large-scale Dynamical Systems}, SIAM, Philadelphia, 2005.
	
	\bibitem{real2}H.~Bart, I.~Gohberg, M.~A. Kaashoek, A.~Ran, \textit{Factorization of Matrix and Operator Functions: The State Space Method}, Birkhäuser Verlag, Basel, 2008.

\bibitem{odddegree} M.~I.~Bueno, F.~M.~Dopico, S.~Furtado, L.~Medina, \textit{A block-symmetric linearization of odd degree matrix
polynomials with optimal eigenvalue condition
number and backward error}, submitted, 2018.

\bibitem{fiedler} M.~I.~Bueno, F.~M.~Dopico, J.~Pérez, R.~Saavedra, B.~Zykoski, \textit{A simplified approach to Fiedler-like pencils via block minimal bases pencils}, Linear Algebra Appl., 547, 45--104. 2018. 


\bibitem{eigenmode} G.~Demésy, A.~Nicolet, B.~Gralak, C.~Geuzaine, C.~Campos, J.~E.~Roman, \textit{Eigenmode computations of frequency-dispersive photonic open structures: A non-linear eigenvalue problem}, submitted. Available as arXiv:1802.02363, 2018.

\bibitem{singular} F.~De Terán, F.~M.~Dopico, D.~S.~Mackey, \textit{Linearizations of singular matrix polynomials and the recovery of minimal indices}, Electron. J. Linear Algebra 18, 371--402, 2009.

\bibitem{spectral} F.~De Terán, F.~M.~Dopico, D.~S.~Mackey, \textit{Spectral equivalence of matrix polynomials and
the index sum theorem}, Linear Algebra Appl., 459, 264--333, 2014.  


	
	\bibitem{ratkrylov}
	F.~M. Dopico, J.~González-Pizarro, \textit{A compact rational {K}rylov
		method for large-scale rational eigenvalue problems}, submitted. Available as arXiv: 1705.06982, 2017.
	
	\bibitem{BKL}F.~M. Dopico, P.~W. Lawrence, J.~Pérez, P.~Van Dooren, \textit{Block Kronecker linearizations of matrix polynomials and their backward errors}, Numerische Mathematik, 2018, DOI 10.1007/s00211-018-0969-z. Extended version available as {M}{I}{M}{S} {E}{P}rint 2016.34, Manchester Institute for Mathematical Sciences, The University of Manchester, UK, 2016.
	
	\bibitem{structured} F.~M. Dopico, J.~Pérez, P.~Van Dooren, \textit{Structured backward error analysis of linearized structured polynomial eigenvalue problems}, to appear in Math. Comp., available as: arXiv:1612.07011v1, 2018.
		
		
	\bibitem{ortho} H.~Faßbender, P.~Saltenberger, \textit{On vector spaces of linearizations for matrix polynomials in orthogonal bases}, Linear Algebra Appl., 525, 59--83, 2017. 
	
	\bibitem{forney} G.~D. Forney, Jr., \textit{Minimal bases of rational vector spaces, with applications to multivariable linear systems,} SIAM J. Control, 13(3), 493--520, 1975.
	
	\bibitem{guttel-tisseur}
	S.~G\"uttel, F.~Tisseur,  \textit{The nonlinear eigenvalue problem,} Acta
	Numer., 26, 1--94, 2017.
	
	\bibitem{guttel-NLEIGS}
S.~G\"uttel, R.~Van~Beeumen, K.~Meerbergen, W.~Michiels, \textit{N{LEIGS}: a class of fully rational {K}rylov methods for nonlinear eigenvalue
  problems,} SIAM J. Sci. Comput., 36, A2842--A2864, 2014.
	
	\bibitem{real}C. Heij, A. Ran, F. van Schagen, \textit{Introduction to Mathematical Systems Theory: Linear Systems, Identification and Control}, Birkhäuser Verlag, Basel, 2007.
		
	\bibitem{sympoly} N.~J. Higham, D.~S. Mackey, N.~Mackey, F.~Tisseur, \textit{Symmetric linearizations for matrix polynomials}, SIAM J. Matrix Anal. Appl., 29(1), 143–159, 2006.
	
	
	\bibitem{HyJ}
	R.~A.~Horn, C.~R.~Johnson, \textit{Topics in Matrix Analysis,} Cambridge University Press, Cambridge, 1994. Corrected reprint of the 1991 original.

	
	\bibitem{Kailath80} T.~Kailath, \textit{Linear Systems}, Prentice Hall, New Jersey, 1980.
	
	\bibitem{Kressner-roman} 
	D.~Kressner, J.~E.~Roman, 
	\textit{Memory-efficient Arnoldi algorithms for linearizations of matrix polynomials in 		Chebyshev basis,} 
	Num. Linear Algebra Appl., 21(4), 569--588, 2014.


	\bibitem{automatic}	P. Lietaert, J. P\'erez, B. Vandereycken, K. Meerbergen, \textit{Automatic rational approximation and linearization of nonlinear eigenvalue problems}, submitted. Avaible as arXiv:1801.08622v2, 2018.

	
	\bibitem{duffee} C.~C. MacDuffee, \textit{The Theory of Matrices}, Chelsea Publishing Company, New York, 1946.
	
	\bibitem{MMMM} D.~S. Mackey, N. Mackey, C. Mehl, V. Mehrmann, \textit{Vector spaces of linearizations for matrix polynomials,} SIAM J. Matrix Anal. Appl., 28(4), 971–1004, 2006.
	
	\bibitem{Mehrmannnonlineareigenvalue}
	V.~Mehrmann, H.~Voss, \textit{Nonlinear eigenvalue problems: a challenge
		for modern eigenvalue methods}, GAMM Mitt. Ges. Angew. Math. Mech., 27, 121--152, 2004.
	
	\bibitem{bivariate}Y. Nakatsukasa, V. Noferini, A. Townsend, \textit{Vector spaces of linearizations for matrix polynomials: A bivariate polynomial approach}, SIAM J. Matrix Anal. Appl., 38(1), 1--29, 2017.
	
	\bibitem{Rosen70} H.~H. Rosenbrock, \textit{State-space and Multivariable Theory}, Thomas Nelson and Sons, London, 1970.	
	
	\bibitem{SuBai} Y. Su, Z. Bai, \textit{Solving rational eigenvalue problems via linearization,} SIAM J. Matrix Anal. Appl., 32(1), 201--216, 2011.
	
	\bibitem{cork} R.~Van~Beeumen, K.~Meerbergen, W.~Michiels, \textit{Compact rational {K}rylov methods for nonlinear eigenvalue problems,} SIAM J. Matrix Anal. Appl., 36, 820--838, 2015.

	
	\bibitem{Vard91} A.~I. G. Vardulakis, \textit{Linear Multivariable Control}, John Wiley and Sons, New York, 1991.	 
\end{thebibliography}
\end{document}